\title{Global well-posedness and stability of the 2D Boussinesq equations with partial dissipation near a hydrostatic equilibrium}
\author{Kyungkeun Kang\thanks{
		Yonsei University. E-mail address: \url{kkang@yonsei.ac.kr}}
	\and
	Jihoon Lee\thanks{
		Chung-Ang University. E-mail address: \url{jhleepde@cau.ac.kr}}
	\and
	Dinh Duong Nguyen\thanks{
		Yonsei University and Chung-Ang University. E-mail address:  \url{dinhduongnguyen.math@gmail.com}
	}
}
\pgfplotsset{compat=newest}
\numberwithin{equation}{section}
\DeclareMathOperator*{\esssup}{ess\,sup}
\newtheorem{theorem}{Theorem}[section]
\theoremstyle{definition}
\newtheorem{remark}{Remark}[section]
\newenvironment{AMS}{}{}
\newenvironment{keywords}{}{}
\begin{document}
	\maketitle
	
	\begin{abstract}
		The paper is devoted to investigating the well-posedness, stability and large-time behavior near the hydrostatic balance for the 2D Boussinesq equations with partial dissipation. More precisely, the global well-posedness is obtained in the case of partial viscosity and without thermal diffusion for the initial data belonging to  $H^{\delta}(\mathbb{R}^2) \times H^{s}(\mathbb{R}^2)$ for $\delta \in [s-1,s+1]$ if $s \in \mathbb{R}, s > 2$, for $\delta \in (1,s+1]$ if $s \in (0,2]$ and for $\delta \in [0,1]$ if $s = 0$. In addition, if one has either horizontal or vertical thermal diffusion then the stability and large-time behavior are provided in $H^m(\mathbb{R}^2)$, $m \in \mathbb{N}$ and in $\dot{H}^{m-1}(\mathbb{R}^2)$ with $m \in \mathbb{N}$, $m \geq 2$, respectively. 
	\end{abstract}
	
	\begin{keywords}
		\textbf{Keywords:} Boussinesq equations, well-posedness, stability.
	\end{keywords}
	
	\begin{AMS}
		\textbf{Mathematics Subject Classification:} 35B35, 35Q35, 76D03.
	\end{AMS}
	
	%\tableofcontents
	\allowdisplaybreaks % allowing long equations
	
	%
	%---------------------
	\section{Introduction}
	%---------------------
	%
	
	Let us consider the following generalized form for the two-dimensional Boussinesq equations, which describe the motion of an incompressible flow interacting with an active scalar, and can be widely used in geophysics and fluid
	mechanics to model large scale atmospheric and oceanic flows, (see \cite{Majda_2003,Pedlosky_1987})
	\begin{equation} \label{B1} \tag{B1}
		\left\{
		\begin{aligned}
			\partial_t v_1 + v \cdot \nabla v_1 + \partial_1 P &= \nu_1 \partial_{11} v_1 + \nu_2 \partial_{22} v_1,
			\\
			\partial_t v_2 + v \cdot \nabla v_2 + \partial_2 P &= \mu_1\partial_{11} v_2 + \mu_2 \partial_{22} v_2  + g \rho,
			\\
			\partial_t \rho + v \cdot \nabla \rho &= \delta_1 \partial_{11} \rho + \delta_2 \partial_{22} \rho,
			\\
			\textnormal{div}\, v &= 0,
		\end{aligned}
		\right.
		\qquad \text{in} \quad \Omega \times (0,\infty),
	\end{equation}
	where $\Omega = \mathbb{R}^2, \mathbb{T}^2, \mathbb{T} \times \mathbb{R}$ with $\mathbb{T} = [0,1]$ being 1D periodic domain or $\Omega$ be bounded in $\mathbb{R}^2$, $v = (v_1,v_2) : \Omega \times (0,\infty) \rightarrow  \mathbb{R}^2$, $P : \Omega \times (0,\infty) \rightarrow  \mathbb{R}$ and $\rho : \Omega \times (0,\infty) \rightarrow  \mathbb{R}$ are the velocity field, pressure and temperature, respectively. For $i \in \{1,2\}$, the nonnegative viscosity and diffusivity coefficients $\nu_i,\mu_i$ and $\delta_i$ are given. Usually, the constant $g \in \mathbb{R}$ is the acceleration due to gravity and without loss of generality, we can assume that $g = 1$. The given initial data of \eqref{B1} is denoted by $(v,\rho)_{|_{t=0}} = (v_0,\rho_0)$ with $\text{div}\, v_0 = 0$. 
	
	%
	%--------------------------
	\subsection{Well-posedness}
	%--------------------------
	%
	
	\textit{Inviscid or full dissipation.} From the mathematical point of view, the 2D inviscid Boussinesq equations (i.e., \eqref{B1} with $\nu_i = \mu_i = \delta_i = 0$, $i = 1,2$) can be identified with the 3D Euler equations for axisymmetric
	swirling flows when the radius $r > 0$, see \cite{Doering_Wu_Zhao_Zheng_2018,Majda_Bertozzi_2002} and the local well-posedness (LWP) in the usual Sobolev spaces $H^m(\mathbb{R}^2) \times H^m(\mathbb{R}^2)$, $m \in \mathbb{N}$, $m \geq 3$ has been given in \cite{Chae_Nam_1997}. However, the blow-up in finite time of this local classical solution or the global well-posedness (GWP) for large data, in general, is still an open question. This is one
	of the fundamental questions in fluid mechanics, as also mentioned in \cite{Yudovich_2003}. Recently, related to this open question, finite time singularity formation for H\"{o}lder continuous or smooth initial data have been given in \cite[corner domains]{Elgindi-Jeong_2020}  and in \cite[half space]{Chen-Hou_2021}. On the other hand, it is well-known that \eqref{B1} is globally well-posed in the case of
	full dissipation, i.e., $\nu_i,\mu_i,\delta_i > 0$, $i = 1,2$, see \cite{Canno-DiBennedetto_1980,Foias-Manley-Temam_1987,Guo_1989}. 
	
	The sub-full dissipation cases, i.e., \eqref{B1} either with full viscosity and without diffusivity or without viscosity and with full diffusivity coefficients, will be summarized as follows.
	
	\textit{Full viscosity: $\nu_i = \mu_i > 0$ and $\delta_i = 0$, $i = 1,2$.} The GWP has been provided in  $H^m(\mathbb{R}^2) \times H^m(\mathbb{R}^2)$ (see \cite{Chae_2006}) and independently in $H^m(\mathbb{R}^2) \times H^{m-1}(\mathbb{R}^2)$ (see \cite{Hou_Li_2005}) for $m \in \mathbb{N}$, $m \geq 3$. For lower regularity data, GWP of weak solutions was obtained in \cite[$\Omega = \mathbb{R}^2$]{Abidi-Hmidi_2007,Boardman-Ji-Qiu-Wu_2019,Danchin-Paicu_2008,Hmidi-Keraani_2007} and in \cite[$\Omega = \mathbb{T}^2$]{Tao_Wu_Zhao_Zheng_2020}. In \cite[$\Omega = \mathbb{R}^2, \mathbb{T}^2$]{Hu-Kakuvica-Ziane_2013} the authors provided a unique global solution in $H^2(\mathbb{R}^2) \times H^1(\mathbb{R}^2)$. The GWP in the fractional Sobolev spaces: a) $H^s(\mathbb{R}^2)$, $s \in \mathbb{R}, s > 1$; b) $H^{s+1}(\mathbb{R}^2) \times H^s(\mathbb{R}^2)$, $s \in (0,1)$; and c) $W^{1,p}(\mathbb{R}^2) \times L^p(\mathbb{R}^2)$, $p \in [2,\infty)$; d) $W^{1+s,p}(\mathbb{R}^2) \times W^{s,p}(\mathbb{R}^2)$, $s \in (0,1)$, $p \in [2,\infty)$ with $sp > 2$; e) $W^{1+s,p}(\mathbb{T}^2) \times W^{s,p}(\mathbb{T}^2)$, $s \in (0,1)$, $p \in [2,\infty)$, have been obtained in \cite{Hu-Kukavica-Ziane_2015} and \cite{Kukavica-Wang-Ziane_2016}, respectively. For bounded domains $\Omega$, the GWP has been studied in \cite{Doering_Wu_Zhao_Zheng_2018,He_2012,Hu-Kakuvica-Ziane_2013, Lai-Pan-Zhao_2011}. The global regularity for temperature patch problem was also investigated in \cite[$\Omega = \mathbb{R}^2$]{Danchin-Zhang_2017,Gancedo-Garcia-Juarez_2017}. Moreover, the upper bound for large time and global attractors are given in \cite[$\Omega = \mathbb{T}^2, \mathbb{R}^2$ or $\Omega$ bounded]{Kukavica-Wang_2020} and in \cite[$\Omega = \mathbb{T}^2$]{Biswas-Foias-Larios_2017}, respectively. Recently, the authors in \cite[$\Omega = \mathbb{T}^2,\mathbb{R}^2$]{Kiselev-Park-Yao_2022} have been contructed global smooth solutions such that the $\dot{H}^s$ norm of $\theta$, $s \geq 1$, grows to infinity algebraically in time. 
	
	\textit{Full diffusivity: $\nu_i = \mu_i = 0$ and $\delta_i > 0$, $i = 1,2$.} The GWP has been given in $H^m(\mathbb{R}^2)$ (see \cite{Chae_2006}) for $m \in \mathbb{N}$, $m \geq 3$. For lower regularity data, for example Yodovich's type data, see \cite[$\Omega= \mathbb{R}^2$]{Danchin-Paicu_2009}. For the case that $\Omega$ is a bounded domain in $\mathbb{R}^2$, see \cite{Huang_2014,Zhao_2010,Zhou_Li_2017}.
	
	\textit{Partial dissipation.} The Boussinesq equations with partial dissipation have been recently getting a lot of attention. The GWP is also extended to the case that \eqref{B1} contains only horizontal viscosity or only horizontal diffusivity, see \cite[$\Omega = \mathbb{R}^2$]{Danchin_Paicu_2011,Paicu-Zhu_2020}, or with a relaxing condition on data in \cite[$\Omega = \mathbb{T}^2$]{Larios_Lunasin_Titi_2013}. If \eqref{B1} has only vertical viscosity and diffusivity 
	then the authors in \cite[$\Omega = \mathbb{R}^2$]{Cao_Wu_2013} provided the GWP in $H^2(\mathbb{R}^2)$, see also \cite{Li-Titi_2016} for lower regularity data. In addition, if \eqref{B1} admits mixed directional viscosity or full viscosity for the vertical velocity then  $H^3$ or $H^s$, $s \in \mathbb{R},s > 2$ solution has been obtained in \cite[$\Omega = \mathbb{R}^2$]{Adhikari-Cao-Shang-Wu-Xu-Ye_2016},
	respectively. The former case is also recently considered in \cite[$\Omega = \mathbb{T}^2$]{He-Ma-Sun_2022} or in \cite[$\Omega$ bounded or $\Omega = \mathbb{R}^2$]{Hu-Wang-Wu-Xiao-Yuan_2018} with a data relaxing and in \cite[$\Omega = \mathbb{R}^2$]{Ye_2020} with a fractional derivative form. Furthermore, the authors in \cite{He-Ma-Sun_2022} are also study the global attractors. 
	
	For the reader's convenience, known GWP and our results to \eqref{B1} in the case of partial viscosity and without diffusivity coefficients are given in the following table:
	
	\begin{table}[htbp]
		\centering
		\small
		\begin{tabular}{|c|c|c|l|}
			\hline
			Case & Positive & Zero & \hspace{2.7cm} Data and GWP results 
			\\\hline
			\multirow{4}{.5em}{1} & \multirow{4}{2em}{$\nu_i,\mu_i$} & \multirow{4}{.5em}{$\delta_i$} & $\bullet$ $H^m$ for $m \in \mathbb{N}, m \geq 3$ in \cite[$\mathbb{R}^2$]{Chae_2006}
			\\
			&&& $\bullet$ $H^m \times H^{m-1}$ for $m \in \mathbb{N}, m \geq 3$ in \cite[$\mathbb{R}^2$]{Hou_Li_2005}
			\\
			&&& $\bullet$ $H^s \times B^0_{2,1} \cap B^0_{p,\infty}$ for $s \in (0,2]$ and $p \in (2,\infty]$ in \cite[$\mathbb{R}^2$]{Hmidi-Keraani_2007}
			\\
			&&& $\bullet$ $L^2 \cap B^{-1}_{\infty,-1} \times B^0_{2,1}$ in \cite[$\mathbb{R}^2$]{Abidi-Hmidi_2007}
			\\
			&&& $\bullet$ $H^s$, $s \in \mathbb{R},s > 1$ or $H^{s+1} \times H^s$, $s \in (0,1)$ in \cite[$ \mathbb{R}^2$]{Hu-Kukavica-Ziane_2015}
			\\
			&&& $\bullet$ $L^2$ in \cite[$\Omega$ bounded]{He_2012}
			\\\hline
			\multirow{4}{.5em}{2} & \multirow{4}{2.1em}{$\nu_1,\mu_1$} & \multirow{4}{3.5em}{$\nu_2,\mu_2,\delta_i$} & $\bullet$ $H^1 \times H^s \cap L^\infty$ with $s \in (\frac{1}{2},1]$, $w_0 \in \sqrt{L}$ in \cite[$\mathbb{R}^2$]{Danchin_Paicu_2011}
			\\
			& & & $\bullet$ $H^s \times
			 H^{s-1}$, $s \in \mathbb{R}, s > 2$ in \cite[$\mathbb{R}^2$]{Danchin_Paicu_2011} 
			\\
			& & & $\bullet$ $H^1 \times L^2 \cap L^\infty$ with $w_0 \in \sqrt{L}$ in \cite[$\mathbb{T}^2$]{Larios_Lunasin_Titi_2013}
			\\
			& & & $\bullet$ $L^2 \times H^{s'}$ with $w_0 \in \sqrt{L} \cap H^s$, $s' \in \mathbb{R}, s \in (\frac{1}{2},s')$ in \cite[$\mathbb{R}^2$]{Paicu-Zhu_2020}
			\\
			\hline
			\multirow{4}{.5em}{3} & \multirow{4}{2.1em}{$\nu_2,\mu_1$} & \multirow{4}{3.5em}{$\nu_1,\mu_2,\delta_i$} & $\bullet$ $H^3$ in \cite[$\mathbb{R}^2$]{Adhikari-Cao-Shang-Wu-Xu-Ye_2016}
			\\
			&&& $\bullet$ $H^1 \times L^2 \cap L^\infty$ in \cite[$\Omega$ bounded or $\mathbb{R}^2$]{Hu-Wang-Wu-Xiao-Yuan_2018}
			\\
			&&& $\bullet$ $L^2 \times L^2$ in \cite[$\Omega = \mathbb{T}^2$]{He-Ma-Sun_2022}
			\\
			&&& $\bullet$ $H^\delta \times H^s$ in Theorem \ref{theo_nu2_mu1_de} for $\Omega = \mathbb{R}^2$, $\delta \in [0,1]$ if $s = 0$,
			\\
			&&& \quad $\delta \in (1,s+1]$ if 
			$s \in (0,2]$ and $\delta \in [s-1,s+1]$ if $s \in \mathbb{R}, s > 2$
			\\
			\hline
			4 & $\mu_1,\mu_2$ & $\nu_1,\nu_2,\delta_i$ & $\bullet$ $H^s$, $s \in \mathbb{R}, s > 2$ in \cite[$\mathbb{R}^2$]{Adhikari-Cao-Shang-Wu-Xu-Ye_2016}
			\\
			\hline
			\multirow{4}{.5em}{5} & $\nu_1,\nu_2 $ & $\mu_1,\mu_2,\delta_i$ &  \multirow{4}{2em}{open}
			\\
			& $\nu_1,\mu_2$ & $\nu_2,\mu_1,\delta_i$ & \multirow{4}{2em}{}
			\\
			& $\nu_2,\mu_2$ & $\nu_1,\mu_1,\delta_i$ & \multirow{4}{2em}{}
			\\
			& & $\nu_i,\mu_i,\delta_i$ & \multirow{4}{2em}{}
			\\\hline
		\end{tabular}
	\end{table}
	
	In the above table, $w_0 := \partial_1v_{02} - \partial_2v_{01}$ denotes the 2D initial vorticity and the space 
	\begin{equation*}
		\sqrt{L} := \left\{f \in L^p(\mathbb{R}^2) \quad \forall p \in [2,\infty) : \|f\|_{\sqrt{L}} < \infty \right\}
		\qquad \text{where} \qquad 
		\|f\|_{\sqrt{L}} := \sup_{p \in [2,\infty)} \frac{\|f\|_{L^p}}{\sqrt{p-1}}.
	\end{equation*}
	The definition of Besov spaces can be found in Appendix B (see Section \ref{sec:app}). Note that in the above table, the mentioned results consider only the case $\lambda_2 = 0$, while $\lambda_2 \in \mathbb{R}$ in Theorem \ref{theo_nu2_mu1_de}.
	
	%
	%---------------------------------------------
	\subsection{Stability and large-time behavior} 
	%---------------------------------------------
	%
	
	The stability and large-time behavior problems corresponding to \eqref{B1} have been recently attracted a lot of attention. More precisely, it is well-known that \eqref{B1} has a special stationary solution which represents the hydrostatic equilibrium and is given by  (see \cite{Majda_2003})
	\begin{equation*}
		v_{\textnormal{he}} = (0,0) \qquad \text{and} \qquad \nabla P_{\textnormal{he}} = g\rho_{\textnormal{he}}e_2 \qquad \text{for} \quad e_2 = (0,1),
	\end{equation*}
	where the latter relation is known as the hydrostatic equations (see \cite{Gill_1982}). An example\footnote{In the case of $d$ dimensions with $d \geq 2$, $e_2$ and $x_2$ are replaced by $e_d = (0,...,1)$ and $x_d$, respectively.} is 
	\begin{equation*}
		v_{\textnormal{he}} = (0,0), \qquad \rho_{\textnormal{he}} = \lambda x_2 \qquad \text{and} \qquad P_{\textnormal{he}} = \frac{\lambda}{2}x_2^2 \qquad \text{for} \quad  \lambda \in \mathbb{R}, g = 1.
	\end{equation*}
	
	We are interested in the stability of \eqref{B1} around $(v_{\textnormal{he}},\rho_{\textnormal{he}},P_{\textnormal{he}})$, i.e.,  we focus on the GWP of the following system, for $g = 1$, $\theta = \rho - \rho_{\textnormal{he}}$ and $\pi = P - P_{\textnormal{he}}$
	\begin{equation} \label{B2} \tag{B2}
		\left\{
		\begin{aligned}
			\partial_t v_1 + v \cdot \nabla v_1 + \partial_1 \pi &= \nu_1 \partial_{11} v_1 + \nu_2 \partial_{22} v_1,
			\\
			\partial_t v_2 + v \cdot \nabla v_2 + \partial_2 \pi &= \mu_1\partial_{11} v_2 + \mu_2 \partial_{22} v_2  + \lambda\theta,
			\\
			\partial_t \theta + v \cdot \nabla \theta + \lambda v_2 &= \delta_1 \partial_{11} \theta + \delta_2 \partial_{22} \theta,
			\\
			\textnormal{div}\, v &= 0,
		\end{aligned}
		\right.
		\qquad \text{in} \quad \Omega \times (0,\infty),
	\end{equation}
	with the initial data $(v,\theta)_{|_{t=0}} = (v_0,\theta_0)$ is sufficiently small and satisfies $\text{div}\, v_0 = 0$ and we also investigate the large-time behavior of the perturbations. In particular, we would like to know the contribution of the viscosity and diffusivity coefficients on the stability and large-time behavior of the perturbations. Let us give a brief summary on the stability results in Sobolev spaces in two dimensions as follows:
	
	\begin{table}[htbp]
		\centering
		\begin{tabular}{|c|c|c|l|}
			\hline
			Case & Positive & Zero & \hspace{.5cm} Domain, data and stability results
			\\
			\hline
			\multirow{3}{.5em}{1} & \multirow{3}{3.5em}{$\nu_2,\mu_1,\delta_1$} & \multirow{3}{3.5em}{$\nu_1,\mu_2,\delta_2$} & $\bullet$ $\Omega = \mathbb{R}^2$, $H^1$ in \cite{Ji_Li_Wei_Wu_2019}
			\\
			&&& $\bullet$ $\Omega = \mathbb{R}^2$, $H^2$ in \cite{Chen-Lui_2022,Wei_Li_2021}
			\\
			&&& $\bullet$ $\Omega = \mathbb{R}^2$, $H^m$ for $m \in \mathbb{N}$ in Theorem \ref{theo_nu2_mu1_de2}
			\\
			\hline
			2 & $\nu_2,\mu_1, \delta_2$ & $\nu_1,\mu_2,\delta_1$& $\bullet$ $\Omega = \mathbb{R}^2$, $H^m$ for $m \in \mathbb{N}$ in Theorem \ref{theo_nu2_mu1_de2}
			\\
			\hline
			\multirow{2}{.5em}{3} & \multirow{2}{3.5em}{$\nu_2,\mu_2,\delta_1$} & \multirow{2}{3.5em}{$\nu_1,\mu_1,\delta_2$} & $\bullet$ $\Omega = \mathbb{R}^2$, $H^1$ and $H^2$ in \cite{BenSaid_Pandey_Wu_2022} 
			\\
			&&& $\bullet$ $\Omega = \mathbb{T} \times \mathbb{R}$, $H^2$ in \cite{OussamaBenSaid_MonaBenSaid_2021}
			\\
			\hline
			4 & $\nu_1,\mu_1,\delta_2$ & $\nu_2,\mu_2,\delta_1$ & $\bullet$ $\Omega = \mathbb{T} \times \mathbb{R}$, $H^2$ in \cite{Adhikari_BenSaid_Pandey_Wu_2022}
			\\
			\hline
			5 & $\nu_1,\mu_1,\delta_1$ & $\nu_2,\mu_2,\delta_2$ & $\bullet$ $\Omega = \mathbb{T} \times \mathbb{R}$, $H^1$ and $H^2$ in  \cite{Dong_Wu_Xu_Zhu_2021}
			\\
			\hline
		\end{tabular}
	\end{table}
	In the above table, Case 5 (only horizontal dissipation) has been recently studied in \cite{Ji_Yan_Wu_2022} in three dimensions. We should mention that the case with $\nu_i,\mu_i > 0$ and $\delta_i = 0$, which has been recently investigated in \cite[$\Omega = \mathbb{T}^2$]{Tao_Wu_Zhao_Zheng_2020} as well, where the authors provided a unique global solution to \eqref{B2}, $(v,\theta) \in L^\infty(0,\infty;H^2 \times L^2)$ if $(v_0,\theta_0) \in H^2\times L^2\cap L^\infty$ and in addition  if $(v_0,\theta_0)$ is small in $L^2$ then $(v,\theta)$ is also small in $H^2 \times L^2$ for large time. They also provided the behavior of $(v,\theta)$ in $H^1 \times L^2$ as time goes to infinity. For Case 1 in the above table, the authors in \cite{Chen-Lui_2022} also provided the decay in time of $\partial_2v_1,\partial_1v_2$ and $\partial_1\theta$ in $L^2$ norm, but without explicit rate. Our result (see Theorem \ref{theo_nu2_mu1_de2}) provides $\dot{H}^{m-1}$ large-time behavior for $m \in \mathbb{N},m \geq 2$, but also without explicit rate. We also list open cases in Remark \ref{rm2}.
	
	%
	%------------------------
	\subsection{Main results}
	%------------------------
	%
	
	In this paper, we focus on the well-posedness, stability and large-time behavior of solutions to $\eqref{B1}$ and $\eqref{B2}$ on the whole space $\mathbb{R}^2$. Before going further, we rewrite both \eqref{B1} and \eqref{B2} in the following form 
	\begin{equation} \label{B} \tag{B}
		\left\{
		\begin{aligned}
			\partial_t v_1 + v \cdot \nabla v_1 + \partial_1 \pi &= \nu_1 \partial_{11} v_1 + \nu_2 \partial_{22} v_1,
			\\
			\partial_t v_2 + v \cdot \nabla v_2 + \partial_2 \pi &= \mu_1\partial_{11} v_2 + \mu_2 \partial_{22} v_2  + \lambda_1\theta,
			\\
			\partial_t \theta + v \cdot \nabla \theta + \lambda_2 v_2 &= \delta_1 \partial_{11} \theta + \delta_2 \partial_{22} \theta,
			\\
			\textnormal{div}\, v &= 0,
		\end{aligned}
		\right.
		\qquad \text{in} \quad \mathbb{R}^2 \times (0,\infty),
	\end{equation}
	with the initial data $(v,\theta)_{|_{t=0}} = (v_0,\theta_0)$ satisfying $\text{div}\, v_0 = 0$. Here $\lambda_1$ and $\lambda_2$ are given real numbers that $\lambda_1 = 1$ and  $\lambda_2 = 0$ or $\lambda_i =  \lambda$ is corresponding to $\eqref{B1}$ or \eqref{B2}. In the rest of the paper, for $s \in \mathbb{R}$ and $p \in [1,\infty]$, we will usually write $H^s$, $L^p$  instead of $H^s(\mathbb{R}^2)$ and $L^2(\mathbb{R}^2)$ for simplicity. Sometimes, we also write $X$ instead of $X \times ... \times X$ for a functional space $X$. Our first result is about the GWP of \eqref{B} and is given as follows. 
	
	\begin{theorem}[Global well-posedness] \label{theo_nu2_mu1_de} 
		Let $\nu_1 = \mu_2 = \delta_i = 0$, $\nu_2,\mu_1,\lambda_i,\delta,s \in \mathbb{R}$ for $i \in \{1,2\}$ with $\nu_2,\mu_1 > 0$ and $\delta,s \geq 0$. Assume that 
		\begin{equation*}
			(v_0,\theta_0) \in H^\delta \times H^s 
			\qquad \text{with} \quad  \textnormal{div}\, v_0 = 0 \quad \text{and}\quad
			\begin{cases}
				\delta \in [s-1,s+1] &\text{if} \quad s > 2,
				\\
				\delta \in (1,s+1] &\text{if} \quad s \in (0,2],
				\\
				\delta \in [0,1] &\text{if} \quad s = 0.
			\end{cases}
		\end{equation*}
		Then \eqref{B} admits a unique solution $(v,\theta)$ satisfying for any $T \in (0,\infty)$ 
		\begin{equation*}
			(v,\theta) \in C([0,T];H^\delta \times H^s)
			\qquad  \text{and} \qquad 
			v \in L^2(0,T;H^{\delta+1}).
		\end{equation*}
		In particular, for $t \in (0,T)$
		\begin{equation*}
			\|v(t)\|^2_{H^\delta} + \|\theta(t)\|^2_{H^s} + \int^t_0 \|v\|^2_{H^{\delta+1}} \,d\tau \leq C(T,\delta,s,\nu_2,\mu_1,\lambda_i,v_0,\theta_0).
		\end{equation*}
	\end{theorem}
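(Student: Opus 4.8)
The plan is to follow the standard three-step route --- construct approximate solutions, derive uniform a priori bounds, and pass to the limit --- with the real work concentrated in the a priori estimates, which must be engineered around the highly anisotropic and incomplete dissipation (vertical damping on $v_1$, horizontal damping on $v_2$, and no damping at all on $\theta$). For existence I would regularize \eqref{B} by a Friedrichs mollifier (or a Galerkin truncation), noting that mollification preserves both the divergence-free constraint and the cancellation $\int (v\cdot\nabla f)\,f = 0$, solve the resulting ODE system, and then use the uniform bounds below together with Aubin--Lions compactness to extract a convergent subsequence; the time continuity $(v,\theta)\in C([0,T];H^\delta\times H^s)$ follows from the bounds and the equation in the usual way.

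For the a priori estimates I would begin at the $L^2$ level. Testing the momentum equations against $v$ and the temperature equation against $\theta$, the transport and pressure terms drop and one is left with the two coupling terms $\lambda_1\int\theta v_2$ and $-\lambda_2\int v_2\theta$; their sum $(\lambda_1-\lambda_2)\int v_2\theta$ is absorbed by Young's inequality, so Gronwall yields
\begin{equation*}
\|v(t)\|_{L^2}^2+\|\theta(t)\|_{L^2}^2+\int_0^t\bigl(\nu_2\|\partial_2v_1\|_{L^2}^2+\mu_1\|\partial_1 v_2\|_{L^2}^2\bigr)\,d\tau \le \bigl(\|v_0\|_{L^2}^2+\|\theta_0\|_{L^2}^2\bigr)e^{|\lambda_1-\lambda_2|t},
\end{equation*}
and notably no sign condition on $\lambda_1\lambda_2$ is required because finite-time exponential growth is acceptable. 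The crucial observation at this stage is that the damping controls only the two directional derivatives $\partial_2 v_1$ and $\partial_1 v_2$; the remaining gradient component $\partial_1 v_1=-\partial_2 v_2$ is not directly dissipated. To recover it --- and thereby the full bound $v\in L^2(0,T;H^{\delta+1})$ claimed in the statement --- I would differentiate the $v_1$ equation in $x_1$ and the $v_2$ equation in $x_2$, set $q:=\partial_1 v_1=-\partial_2 v_2$, and exploit that these two scalar equations inherit $\nu_2\partial_{22}$ and $\mu_1\partial_{11}$ damping respectively; combined with the divergence-free identity $\|\nabla v\|_{L^2}=\|w\|_{L^2}$ for $w=\partial_1 v_2-\partial_2 v_1$, this lets the missing direction be bootstrapped off the vorticity equation.

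The top-order estimate is then carried out in $H^\delta\times H^s$ by applying $\Lambda^\delta=(-\Delta)^{\delta/2}$ to the velocity equations and $\Lambda^s$ to the temperature equation. The transport nonlinearities are handled by the commutator (Kato--Ponce / Kenig--Ponce--Vega) estimates and the fractional Leibniz rule, while the products are split with anisotropic Ladyzhenskaya-type inequalities of the form $\int|fgh|\lesssim\|f\|_{L^2}\|g\|_{L^2}^{1/2}\|\partial_1 g\|_{L^2}^{1/2}\|h\|_{L^2}^{1/2}\|\partial_2 h\|_{L^2}^{1/2}$, so that every factor can be measured in one of the two damped directions and absorbed into the dissipation. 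The admissible range $\delta\in[s-1,s+1]$ is exactly what makes the two linear couplings close: $\delta\le s+1$ lets the buoyancy term $\lambda_1\theta$ in the velocity estimate be bounded through $\|\theta\|_{H^{\delta-1}}\le\|\theta\|_{H^s}$, and $\delta\ge s-1$ lets the stratification term $\lambda_2 v_2$ in the temperature estimate be bounded using $v\in H^{\delta+1}$; the three separate regimes ($s>2$, $s\in(0,2]$, $s=0$) correspond to the thresholds $H^s\hookrightarrow C^1$, $H^s\hookrightarrow L^\infty$, and pure $L^2$, the last requiring a Littlewood--Paley / Besov ($B^0_{2,1}$) treatment with a logarithmic transport estimate.

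The step I expect to be the main obstacle is the top-order temperature estimate. Because $\theta$ is purely transported, $\|\theta\|_{H^s}$ can only be controlled through a bound of the schematic form $\tfrac{d}{dt}\|\theta\|_{H^s}\lesssim \|\nabla v\|_{L^\infty}\|\theta\|_{H^s}+(\text{coupling})$, which demands an integrable-in-time control of $\|\nabla v\|_{L^\infty}$ (or a Besov surrogate) that the anisotropic, incomplete velocity dissipation does not supply directly. Closing this loop --- upgrading the directional damping on $v$ to the required control of $\nabla v$ while simultaneously feeding the temperature regularity back into the velocity estimate through the coupling --- is where the divergence-free recovery of the missing derivative, the anisotropic product estimates, and a Gronwall (logarithmic in the low-regularity cases) argument must all be combined. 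Once the a priori bound is closed on $[0,T]$, uniqueness follows by estimating the difference of two solutions in a norm one order below, say $H^{\delta-1}\times H^{s-1}$, where the same anisotropic inequalities together with the available dissipation control the differences of the nonlinear and coupling terms and Gronwall concludes.
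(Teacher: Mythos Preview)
Your outline is broadly in the right spirit, but you miss the paper's central simplification and, as a consequence, propose a harder route than is needed.

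\textbf{The main point you underexploit.} You recover the ``missing direction'' $\partial_1 v_1=-\partial_2 v_2$ only at the $L^2$ level via $\|\nabla v\|_{L^2}=\|w\|_{L^2}$, and then turn to anisotropic Cao--Wu type inequalities $\int |fgh|\lesssim \|f\|_{L^2}\|g\|_{L^2}^{1/2}\|\partial_1 g\|_{L^2}^{1/2}\|h\|_{L^2}^{1/2}\|\partial_2 h\|_{L^2}^{1/2}$ for the higher-order estimates. The paper instead observes that the divergence-free constraint gives, pointwise in Fourier space, $|\xi_2\widehat{v_1}|^2+|\xi_1\widehat{v_2}|^2\ge\tfrac12|\xi|^2|\widehat v|^2$, so that at \emph{every} Sobolev level
\[
\nu_2\|\Lambda^\delta\partial_2 v_1\|_{L^2}^2+\mu_1\|\Lambda^\delta\partial_1 v_2\|_{L^2}^2\;\ge\;\tfrac12\min\{\nu_2,\mu_1\}\,\|v\|_{\dot H^{\delta+1}}^2.
\]
This upgrades the problem to one with full isotropic smoothing on $v$, and the whole velocity side is then handled by standard isotropic tools (fractional Leibniz, Sobolev embeddings) with the nonlinear term written as $(\Lambda^{\delta-1}(v\cdot\nabla v),\Lambda^{\delta+1}v)$ and absorbed into the dissipation. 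The anisotropic product inequalities you propose are the engine of Theorem~\ref{theo_nu2_mu1_de2} (stability with partial thermal diffusion), not of Theorem~\ref{theo_nu2_mu1_de}.

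\textbf{The step you flag as the obstacle is handled differently.} For $s\in(0,2]$ with $\delta>1$, the paper does not try to make $\|\nabla v\|_{L^\infty}$ integrable directly. It uses the Brezis--Gallouet inequality
\[
\|\nabla v\|_{L^\infty}\le C\|v\|_{H^2}\Bigl(1+\log\tfrac{\|v\|_{H^{\delta+1}}}{\|v\|_{H^2}}\Bigr)^{1/2},
\]
valid since $\delta-1\in(0,1]$, together with the already-established bound $\int_0^T\|v\|_{H^2}^2\,d\tau<\infty$ (from the $H^1\times L^2$ case) and a logarithmic Gronwall lemma, to close $\tfrac{d}{dt}\|\theta\|_{H^s}^2\lesssim \|v\|_{H^2}(1+\log\|v\|_{H^{\delta+1}})^{1/2}\|\theta\|_{H^s}^2$. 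This is exactly why the restriction $\delta>1$ (rather than $\delta\ge s-1$) appears when $s\in(0,2]$. For $s>2$ one first bounds $\|\nabla\theta\|_{L^\infty}$ via the $L^p$ transport estimate and a previously established $L^2_tH^3_x$ bound on $v$.

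\textbf{Uniqueness at $s=0$ is more delicate than you indicate.} A difference estimate one order below would place $\theta_1-\theta_2$ in $H^{-1}$; the paper implements this by solving $\Delta(\rho\eta)=\theta$ with a cutoff $\eta$, but closing it requires an Osgood--Yudovich argument because the best available control on $\nabla v$ is $\sup_{p\ge 2}p^{-1}\|\nabla v\|_{L^1_tL^p_x}<\infty$, obtained through a separate Littlewood--Paley argument exploiting again the Fourier inequality above. Your sketch (``same anisotropic inequalities\dots Gronwall'') would not close here, since $\theta$ has no dissipation to absorb anything into.
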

	
	Next, we study the stability and large-time behavior of \eqref{B}. It is needed to assume that either $\delta_1 > 0$ or $\delta_2 > 0$. At the moment, it seems to us that it is difficult to get rid of this assumption. It seems to us that the large-time behavior issue is much harder to obtain than the stability one, in which we are able to show the decay in time only in $\dot{H}^{m-1}$ norm for $m \geq 2$, instead of in $H^m$ norm and without explicit rate of convergence. Our second result is stated as follows.
	
	\begin{theorem}[Stability and large-time behavior] \label{theo_nu2_mu1_de2}
		Let $\nu_1 = \mu_2 = 0$, $\nu_2,\mu_1,\lambda_i = \lambda,\delta_i \in \mathbb{R}$ for $i \in \{1,2\}$ with $\nu_2,\mu_1,\lambda > 0$ and $\delta_i \geq 0$. Assume that $(v_0,\theta_0) \in H^m$ with $m \in \mathbb{N}$ and $\textnormal{div}\,v_0 = 0$. There exists a constant $\epsilon_0 = \epsilon_0(\nu_2,\mu_1,\delta_i) > 0$ such that if 
		\begin{equation*}
			\|v_0\|^2_{H^m} + \|\theta_0\|^2_{H^m} \leq \epsilon^2_0
		\end{equation*}
		and either $\delta_1 = 0,\delta_2 > 0$ or $\delta_1 > 0,\delta_2 = 0$, then \eqref{B} has a unique global solution $(v,\theta)$ satisfying for $t > 0$
		\begin{equation*}
			\|v(t)\|^2_{H^m} + \|\theta(t)\|^2_{H^m} + 2\int^t_0 \nu_2\|\partial_2 v_1\|^2_{H^m} + \mu_1\|\partial_1 v_2\|^2_{H^m} + \delta_i\|\partial_i \theta\|^2_{H^m} \,d\tau \leq 2\epsilon^2_0.
		\end{equation*}
		In addition, if $m \geq 2$ then for $t > 0$, $C = C(m,\lambda,\nu_2,\mu_1,\delta_i)$, $j \in \{1,2\}$ with $j \neq i$ and $\delta_i > 0$
		\begin{equation*}
			\int^t_0  \|\partial_j\theta\|^2_{\dot{H}^{m-2}}\,d\tau \leq C(\epsilon_0^2 + \epsilon^4_0) \qquad \text{and} \qquad \|v(t)\|_{\dot{H}^{m-1}},\|\theta(t)\|_{\dot{H}^{m-1}}  \to 0 \quad \text{as}\quad t \to \infty.
		\end{equation*}
	\end{theorem}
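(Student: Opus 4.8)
The plan is to establish a closed \emph{a priori} estimate in $H^m$ and upgrade it to a global statement by a continuation argument, so I first record the energy structure. Applying $\partial^\alpha$ to \eqref{B} for $|\alpha|\le m$, pairing the velocity equations with $\partial^\alpha v$ and the temperature equation with $\partial^\alpha\theta$, and summing, the pressure drops out by incompressibility and---this is the feature that makes the hydrostatic equilibrium stable---the buoyancy--stratification coupling is antisymmetric: the terms $\lambda\int\partial^\alpha\theta\,\partial^\alpha v_2$ and $\lambda\int\partial^\alpha v_2\,\partial^\alpha\theta$ cancel exactly. Writing $E(t)=\|v\|_{H^m}^2+\|\theta\|_{H^m}^2$ and $D(t)=\nu_2\|\partial_2 v_1\|_{H^m}^2+\mu_1\|\partial_1 v_2\|_{H^m}^2+\delta_i\|\partial_i\theta\|_{H^m}^2$, I would obtain $\tfrac12\tfrac{d}{dt}E+D=\mathcal N$, where $\mathcal N$ collects the commutator contributions $\int([\partial^\alpha,v\cdot\nabla]v_\ell)\,\partial^\alpha v_\ell$ and the analogous transport term for $\theta$.

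The crux is to prove $|\mathcal N|\le C\,E^{1/2}D$, since only the anisotropic derivatives appearing in $D$ are coercive. This is the main obstacle: the dissipation controls $\partial_2 v_1$, $\partial_1 v_2$ and a \emph{single} direction $\partial_i\theta$, whereas the transport nonlinearities a priori involve all first derivatives. I would resolve this by repeatedly invoking $\mathrm{div}\,v=0$ (so that $\partial_1 v_1=-\partial_2 v_2$) together with the anisotropic inequality $\int|fgh|\le C\|f\|_{L^2}\|g\|_{L^2}^{1/2}\|\partial_1 g\|_{L^2}^{1/2}\|h\|_{L^2}^{1/2}\|\partial_2 h\|_{L^2}^{1/2}$, distributing the $\partial_1,\partial_2$ derivatives in each product so that only the dissipated components are differentiated in the ``bad'' direction. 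Once $|\mathcal N|\le CE^{1/2}D$ holds, smallness closes the estimate: setting $T^\ast=\sup\{t:E(t)\le 2\epsilon_0^2\}$ and choosing $\epsilon_0$ with $C(2\epsilon_0^2)^{1/2}\le\tfrac12$ gives $\tfrac{d}{dt}E+D\le0$ on $[0,T^\ast)$, hence $E(t)+\int_0^t D\,d\tau\le E(0)\le\epsilon_0^2$, which in particular implies the stated bound $E(t)+2\int_0^t D\,d\tau\le2\epsilon_0^2$; combined with local well-posedness this forces $T^\ast=\infty$.

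For the large-time behavior I would first harvest $\int_0^\infty D\,d\tau<\infty$, i.e.\ $\partial_2 v_1,\partial_1 v_2\in L^2_tH^m$ and $\partial_i\theta\in L^2_tH^m$; the missing, non-dissipated directions must be squeezed out of the coupling itself. To bound $\int_0^t\|\partial_j\theta\|_{\dot H^{m-2}}^2$ with $j\ne i$, I would use the $v_2$-equation to write $\lambda\theta=\partial_t v_2+v\cdot\nabla v_2+\partial_2\pi-\mu_1\partial_{11}v_2$, apply $\partial^\gamma\partial_j$ with $|\gamma|=m-2$, pair with $\partial^\gamma\partial_j\theta$, and integrate in time. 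The $\partial_t v_2$ term is integrated by parts in time: the boundary contribution telescopes and is controlled by $E(t)+E(0)\lesssim\epsilon_0^2$, while the interior term trades $\partial_t\theta$ for $-v\cdot\nabla\theta-\lambda v_2+\delta_i\partial_{ii}\theta$ via the temperature equation. The terms $\mu_1\partial_{11}v_2$ and $\delta_i\partial_{ii}\theta$ are time-integrable by the first part, the transport terms are quartic and furnish the $\epsilon_0^4$ contribution, and the pressure is handled through $\Delta\pi=-\partial_k(v\cdot\nabla v_k)+\lambda\partial_2\theta+\nu_2\partial_1\partial_{22}v_1+\mu_1\partial_2\partial_{11}v_2$, whose linear part reduces, modulo Riesz transforms (bounded on $\dot H^s$), to $\partial_2\theta$ and to dissipated velocity components; absorbing the $\partial_j\theta$ factor into the left side yields $\int_0^t\|\partial_j\theta\|_{\dot H^{m-2}}^2\le C(\epsilon_0^2+\epsilon_0^4)$.

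Finally, to conclude $\|v\|_{\dot H^{m-1}},\|\theta\|_{\dot H^{m-1}}\to0$ I would show $g(t):=\|v\|_{\dot H^{m-1}}^2+\|\theta\|_{\dot H^{m-1}}^2\in L^1(0,\infty)$ and is uniformly continuous, then invoke Barbalat's lemma. Integrability of the $\theta$-part follows by combining $\partial_i\theta\in L^2_tH^m$ with the estimate just obtained for $\partial_j\theta$; for the velocity, every top-order derivative either carries a $\partial_2$ on $v_1$ or a $\partial_1$ on $v_2$, hence is controlled by $D$, or is a pure $\partial_1^{\,k}v_1$, which I convert through $\partial_1 v_1=-\partial_2 v_2$ and the $\theta$-equation $\lambda v_2=-\partial_t\theta-v\cdot\nabla\theta+\delta_i\partial_{ii}\theta$ into $\theta$-quantities already shown to be $L^1_t$ after a further time integration by parts. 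Uniform continuity of $g$ follows from the uniform bound $E\le2\epsilon_0^2$, which makes $g'$ bounded since $m-1<m$ leaves one derivative of slack in the transport and dissipation terms. The hard part throughout is this large-time step: the non-dissipated directions have no coercive term and must be recovered via the $v_2$--$\theta$ coupling and the pressure at the cost of time-integration by parts and Riesz bookkeeping, which is precisely why one obtains convergence to zero without an explicit rate.
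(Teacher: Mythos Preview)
Your proposal is correct and follows essentially the same route as the paper: the key bound $|\mathcal N|\le CE^{1/2}D$ via the Cao--Wu anisotropic trilinear inequality together with $\mathrm{div}\,v=0$, a bootstrap closure under smallness, the cross-pairing $\langle\partial_j v_2,\partial_j\theta\rangle$ integrated by parts in time to extract the non-dissipated $\partial_j\theta$ direction, and a Barbalat-type lemma for the decay. One simplification you overlooked: to get $\|v\|_{\dot H^{m-1}}^2\in L^1_t$ no $\theta$-equation detour is needed, since for divergence-free $v$ the vorticity identity yields $\|v\|_{\dot H^{m-1}}\lesssim\|\partial_2v_1\|_{\dot H^{m-2}}+\|\partial_1v_2\|_{\dot H^{m-2}}$, and both of these are already time-integrable directly from the stability estimate.
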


	\begin{remark} \label{rm1} We add some comments on Theorem \ref{theo_nu2_mu1_de} as follows:
		\begin{enumerate}
			\item[0.] Let us explain about the form of the initial data and the strategy of the proof:
			
			$\bullet$ The first case $s = 0$. To close $H^\delta$ estimate of $v$, formally we need to bound the terms $(\Lambda^\delta \theta e_2,\Lambda^\delta v)$ and $(\Lambda^\delta(v \cdot \nabla v),\Lambda^\delta v)$. Since we do not have any regularity on $\theta$ then it is natural to move all derivative of $\theta$ to $v$. It leads to bound the terms $(\theta e_2, \Lambda^{2\delta} v)$ and $(v \cdot \nabla v,\Lambda^{2\delta} v)$. Therefore, it is needed $2\delta \leq \delta + 1$ or $\delta \leq 1$ to absorb to the term on the left. In fact, in the case $\delta = 1$, we use the vanishing of the nonlinear term. The range of this case is the segment from $(0,0)$ to $(0,1)$, including the ending points. 
			
			$\bullet$ The second case $s > 0$.  Firstly, similar to the previous case, we also need to estimate the term $(\Lambda^s \theta e_2,\Lambda^{2\delta-s})$, which yields $2\delta-s \leq \delta+1$ or $\delta \leq s+1$ to close the estimate. Secondly, to bound the term $(\Lambda^s(v \cdot \nabla \theta),\Lambda^s \theta)$ (or $\lambda_2(\Lambda^s v_2,\Lambda^s \theta)$ if $\lambda_2 \neq 0$ and similarly for $J^s$ instead of $\Lambda^s$), and somehow we need to control the term $\|\Lambda^s v\|_{L^2}$ by the one $\|\Lambda^{\delta+1}v\|_{L^2}$ on the left. Thus, one should assume that $\delta + 1 \geq s$ or $\delta \geq s-1$. Thirdly, if $s \in (0,2)$ then we should apply the Brezis-Gallouet type inequality (see \cite{Brezis-Gallouet_1980,Hu-Kukavica-Ziane_2015}) to bound the norm $\|\nabla v\|_{L^\infty}$, where it is needed to control the norm  $\|v\|_{H^{s_0+2}}$ in the logarithmic term for some $s_0 \in (0,1]$. Therefore, we need $\delta + 1 \geq  2 + s_0$ or $\delta > 1$. For $s > 2$, the norms $\|\nabla v\|_{L^\infty}$ and $\|\nabla \theta\|_{L^\infty}$ can be bounded by $\|v\|_{H^s}$ and $\|\theta \|_{H^s}$, respectively. Thus, the a priori estimates follow easily. Formally, from these three cases, one can close the estimate for: a) the segment from $(0,0)$ to $(0,1)$ including the ending points; b) $\delta \in [s-1,s+1]$, $\delta > 1$ and $s > 0$; which are the dark black lines and gray region in Figure 1.
			
			$\bullet$ The case $s \in (0,2]$ and $\delta \in [0,1]$ with $\delta > s-1$. In this case, it seems to us that even the local existence is unknown (the global existence and uniqueness of $L^2$ weak solutions in this case are followed by the $L^2$ data one), see the region under the gray one in Figure 1, excluding the segment from $(0,0)$ to $(0,1)$. The main difficulty is we are not able to control the term $\|\nabla v\|_{L^1_tL^\infty_x}$, where with the help from partial viscosity terms, we usually need $v_0 \in H^{1^+}$. It is left as an open question for the interested readers.
			
			$\bullet$ We consider an approximate system by using Fourier truncation method. Then, by working carefully on each case, we obtain uniform bounds in terms of the regularization parameter. As usual, we can pass to the limit and prove the uniqueness of solutions for $s > 1$. The case $s \in [0,1]$ is much more complicated, especially for the case $s = \delta = 0$, where in order to obtain the uniqueness, we need to localize the $\theta$ equation in a suitable way and use the Littlewood–Paley decomposition to get a bound on $\|v\|_{L^2_tL^\infty_x}$, which allows us to bound $p^{-1}\|\nabla v\|_{L^1_tL^p_x}$ for $p \in [2,\infty)$. 
	
			\begin{center}
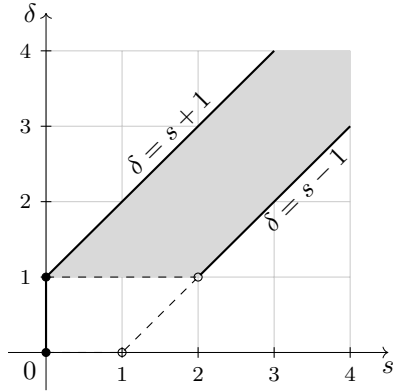

				\begin{tikzpicture} 
					% help lines, grid 
					\draw[step=1,help lines,black!20] (0,0) grid (4,4);
				
					% axis
					\draw[very thin, black,->] (-0.5,0) -- (4.5,0) node[below]{$s$};
					\draw[very thin,black,->] (0,-0.5) -- (0,4.5) node[left]{$\delta$};
				
					\foreach \x in {1,2,...,4}
					\draw[shift={(\x,0)}] (0pt,2pt) -- (0pt,-2pt) node[below] {\footnotesize $\x$};
				
					\foreach \y in {1,2,...,4}
					\draw[shift={(0,\y)}] (2pt,0pt) -- (-2pt,0pt) node[left] {\footnotesize $\y$};
					
					%color drak blue
					\fill[fill=black!15] (0,1) to (2,1) to (4,3) to (4,4) to (3,4) to (0,1);
					
					%line
					%(0,0)
					\draw[fill=black] (0,0) circle (1.5pt) node[below left] {$0$};
					\draw[fill=black] (0,1)  circle (1.5pt)  {};
					\draw[thick,color=black] (0,0) -- (0,1);
					\draw[thick,color=black] (0,1) -- (3,4); 
					
					\draw [] (2.2,3.5) node[left, rotate=45] {$\delta = s+1$};
					%lower part
					%color
					\draw[dashed,color=black] (0,1) -- (1.973,1);
					\draw[dashed,color=black] (1.026,0.026) -- (1.973,0.973);
					\draw[dashed,color=black] (0,0) -- (0.973,0);
					\draw[thick,color=black] (2.026,1.026) -- (4,3) node[below=0.25cm, left, rotate=45] {$\delta = s-1$};
					\draw[color=black] (2,1)  circle[radius=1.5pt] node[below left] {};
					\draw[color=black] (1,0)  circle[radius=1.5pt] node[below left] {};
				\end{tikzpicture}
				\captionof{figure}{Relation between $s$ and $\delta$.}
			\end{center}

			\item[1.] We remark that the LWP in $H^s$ for $s \in \mathbb{R}, s > 2$ of the inviscid version of \eqref{B} follows as that of in \cite{Chae_Nam_1997} in $H^m$ for $m \in \mathbb{N}, m \geq 3$, $\lambda_2 = 0$ and $\lambda_1$ is replaced by a function $f$, see also  \cite{Kim-Lee_2022}, with $\lambda_1 = \lambda_2 = 1$ and a velocity damping term instead of partial vicosity. However, the GWP in this case is open. In addition, as it can be seen in the proof of Theorem \ref{theo_nu2_mu1_de} given in Section \ref{sec:wp}  that the result also holds if $\lambda_1$ is replaced by a function $g$ with 
			\begin{equation*} g \in 
				\begin{cases}
					L^\infty_{\textnormal{loc}}(0,\infty;L^\infty(\mathbb{R}^2)) &\text{if}\quad s = 0,
					\\
					L^\infty_{\textnormal{loc}}(0,\infty;\dot{W}^{\delta,\infty}(\mathbb{R}^2)) &\text{if}\quad s > 0.
				\end{cases}
			\end{equation*}
		    
			\item[2.] As mentioned before, if $\lambda_2 = \nu_1 = \mu_2 = \delta_i = 0$ and $\lambda_1,\nu_2,\mu_1 > 0$ then the authors in \cite[$\Omega$ bounded or $\Omega = \mathbb{R}^2$]{Hu-Wang-Wu-Xiao-Yuan_2018}  and \cite[$\Omega = \mathbb{T}^2$]{He-Ma-Sun_2022} proved the GWP to \eqref{B} with $H^1 \times L^2 \cap L^\infty$ and $L^2$ data, respectively. Note that the proof in \cite{Hu-Wang-Wu-Xiao-Yuan_2018} is given for the case $\Omega$ bounded with different types of boundary conditions and they only mentioned that the case $\Omega = \mathbb{R}^2$ follows as a corollary. In addition, their proof used $L^\infty$ bound of $\theta$ which is obtained directly if $\lambda_2 = 0$ and $\theta_0 \in L^\infty$. However, if $\lambda_2 \neq 0$ then it is not clear (at least for us) how their proof can be applied directly. Moreover, \cite{He-Ma-Sun_2022} gave a proof in the case $\Omega = \mathbb{T}^2$, the authors also mentioned that it is possible to extend their result to the whole space, but without giving a proof. Furthermore, their proof is also can not be applied directly in our case. Especially, the uniqueness for $L^2$ data is obtained by working more carefully in the case of whole space and $\lambda_2 \neq 0$, where we introduce a new way to localize the $\theta$ equation and also use the idea in \cite{Boardman-Ji-Qiu-Wu_2019}, where the authors considered the full Laplacian of $v$.
			
			\item[3.] The GWP of \eqref{B} in the case of with full viscosity and without diffusivity coefficients has been considered in \cite[$\Omega = \mathbb{T}^2$]{Tao_Wu_Zhao_Zheng_2020} with $H^2 \times L^2 \cap L^\infty$ data,
			in  \cite[$\Omega$ bounded]{Doering_Wu_Zhao_Zheng_2018} under the assumption $(v_0,\theta_0) \in H^2 \times H^1 \cap L^\infty$ and in \cite[$\Omega = \mathbb{R}^2$]{Boardman-Ji-Qiu-Wu_2019}, \cite[$\Omega$ bounded]{He_2012} for lower regularity $L^2$ data, but only for $\lambda_2 = 0$. As pointed out in the introduction, the authors in \cite{Hu-Kukavica-Ziane_2015} have been provided global in time estimates either in $H^s \times H^{s-1}$ with $s \in (1,2)$ or in $H^s$ with $s \in \mathbb{R},s > 1$ for $\lambda_2 = 0$. For the latter case, they gave a sketch of the proof for $s\in (1,2]$, 
			confirmed that the case $s \in (2,3)$ follows in the same way as $s \in (1,2)$ 
			and referred to \cite{Chae_2006} for $s \in \mathbb{R}, s \geq 3$. 
			It seems to us that Theorem \ref{theo_nu2_mu1_de} for $s = 0$ and $\delta \in [0,1]$ or $s > 0$ and $s-1 \leq \delta < s$ and $s < \delta < s+1$, $\lambda_2 \in \mathbb{R}$ and on the whole space are new points and we extend the known results in the first table given in the introduction. The authors in \cite{Fefferman-McCormick-Robinson-Rodrigo_2014} have been suggested a question on finding a "simpler" model (rather than the non-resistive MHD) in which the failure of local existence in $(H^{s-1} \times H^s)(\mathbb{R}^d)$ with $s > \frac{d}{2}$, $d \in \{2,3\}$, can be obtained. We show here that it is not the case for \eqref{B}, at least in the 2D case and for $s > 2$. If $\mathbb{R}^2$ is replaced by a bounded $\Omega \subset \mathbb{R}^2$ with smooth boundary then the technique in \cite{He_2012} can be applied to investigate $\eqref{B}$ for $L^2$ data. 
			 
			\item[4.] In \eqref{B}, if $\theta \equiv 0$, $\lambda_2 = 0$ and $\nu_2,\mu_1 > 0$ then by providing the corresponding kernel, using the usual Fourier splitting method and assuming $v_0 \in L^1$, we obtain the Navier-Stokes equations with partial viscosity and can prove the $L^2$ decay in time of $v$ with explicit rate as $\log(e+t)^{-\frac{m}{2}}$ for $m \geq 3$ (see  Appendix C in Section \ref{sec:app}) as the full dissipation case in \cite{Schonbek_1986}. However, in \eqref{B} if $\theta \neq 0$, $\lambda_1 = 1$, $\nu_2,\mu_1 > 0$ and $\lambda_2 = \delta_i = 0$ then the GWP is given by Theorem \ref{theo_nu2_mu1_de}, and under further suitable assumptions on $(v_0,\theta_0)$ the technique in \cite{Kiselev-Park-Yao_2022} can be applied to provide the growth of $\theta$ in $\dot{H}^{s}$ norm for $s \in \mathbb{R},s > 1$.
			
			\item[5.] In Case 6, for $\nu_2,\mu_2 > 0$ and $\nu_1 = \mu_1 = \delta_i = 0$, it has been considered in \cite{Adhikari-Cao-Shang-Wu-Xu-Ye_2016}, where the authors have been provided some useful estimates, but have not been obtained GWP yet. In addition, the case $\mu_1,\delta_2 > 0$ and $\nu_i = \mu_2 = \delta_1 = 0$ is also open (see \cite{Wu-Xu-Zhuan_2017}), in this case a global regularity criteria (even for $\mu_1 = 0$) can be found in \cite{Adhikari-Cao-Shang-Wu-Xu-Ye_2016}.
		\end{enumerate}
	\end{remark}
	
	\begin{remark} \label{rm2} We add some comments on Theorem \ref{theo_nu2_mu1_de2} as follows:
		\begin{enumerate}
			\item[0.] Let us explain about the strategy of the proof:
			
			$\bullet$ We first introduce the corresponding approximate system, which allows us to prove local existence and then doing all calculations. Here, we use again the idea in \cite{Majda_Bertozzi_2002}, where the authors considered the Euler and Navier-Stokes equations for $m \in \mathbb{N},m \geq 3$, but it seems to us that their proof does not work directly for the cases $m = 1$ and $m = 2$, where the proof is not standard, especially without the smallness on the initial data. 
			
			$\bullet$ We then define a suitable energy form $E_m(t)$ in which we start from $m = 1$, $m=2$ and $m \geq 3$ and obtain 
			\begin{equation*}
				E^\epsilon_m(t) \leq E^\epsilon_m(0) + C(\nu_2,\mu_1,\delta_i) (E^\epsilon_m(t))^\frac{3}{2} \qquad \text{for} \quad t \in (0,T_\epsilon),
			\end{equation*}
			where $T_\epsilon$ comes from the local existence and $\epsilon$ denotes regularization parameter, which by using the smallness of initial data and the bootstrapping argument allow us to conclude that $T_\epsilon = \infty$ and obtain the uniform (in terms of $t$ and $\epsilon$) global bound of $E^\epsilon_m(t)$ by $2E_m(0)$. We then can pass to the limit and prove the uniqueness of solutions as usual.
			
			$\bullet$ To obtain the large-time behavior in $\dot{H}^{m-1}$ norm for $m \in \mathbb{N}, m \geq 2$, we prove  
			\begin{align*}
				\left(\|v\|^2_{\dot{H}^{m-1}} + \|\theta\|^2_{\dot{H}^{m-1}}\right)(t-s) &\leq C(\nu_2,\mu_1,\delta_i,\epsilon_0) (t-s) \qquad \text{for} \quad 0 \leq s < t < \infty,
				\\
				\int^\infty_0  \|v\|^2_{\dot{H}^{m-1}} + \|\theta\|^2_{\dot{H}^{m-1}} \,dt &\leq C(\nu_2,\mu_1,\delta_i,\epsilon_0),
			\end{align*}
			which yields the convergence  $\|v(t)\|_{\dot{H}^{m-1}} + \|\theta(t)\|_{\dot{H}^{m-1}} \to 0$ as $t \to \infty$.
			
			\item[1.] As mentioned in the introduction, $H^1$ and $H^2$ stability have recently investigated in \cite{Chen-Lui_2022,Ji_Li_Wei_Wu_2019,Wei_Li_2021} for the case $\delta_1 > 0$ and $\delta_2 = 0$. In fact, they only provided the main estimates, not the full proof. To the best of our knowledge, the case $\delta_1 = 0$ and $\delta_2 > 0$ has not been studied yet. So, we provide a complete proof and apply our proof to the other case when $\delta_1 > 0$ and $\delta_2 = 0$. 
			
			\item[2.] In the proof of Theorem \ref{theo_nu2_mu1_de2} (see Step 8, Section \ref{sec:s_ltb}), the decay in time of $\partial_2v_1$ and $\partial_1v_2$ in $\dot{H}^{m-2}$ norm for $m \in \mathbb{N},m \geq 2$ is obtained and then the decay of $v$ in $\dot{H}^{m-1}$ norm follows as a consequence. Remark that the authors in \cite{Chen-Lui_2022} only provide the decay in $L^2$ norm of $\partial_2v_1,\partial_1v_2$ and $\partial_1\theta$. In addition, the decay in time in $\dot{H}^{m-1}$ norm for $m \geq 2$ of $(v,\theta)$ also implies the decay in other norms such as in $L^p$ norm for $p \in (2,\infty]$, in $\dot{W}^{m-1,q}$ for $q \in (2,\infty]$. Furthermore, there are more advantages in the study large-time behavior in the case $\mathbb{R}^2$ is replaced by $\Omega = \mathbb{T}^2, \mathbb{T} \times \mathbb{R}$, see \cite{OussamaBenSaid_MonaBenSaid_2021,Tao_Wu_Zhao_Zheng_2020}, especially, in the case of horizontal dissipation on space domain $\Omega = \mathbb{T} \times \mathbb{R}$. 
			
			\item[3.] Our proof can be applied to the following cases (at least for $H^1$ or $H^2$ stability) 
			\begin{align*}
				&a.\quad \nu_i,\delta_1 > 0  \quad \text{and}\quad \mu_i = \delta_2 = 0; 
				\\
				&b. \quad \mu_i,\delta_2 > 0 \quad \text{and}\quad \nu_i = \delta_1 = 0;
				\\
				&c. \quad \delta_i > 0 \quad \text{and}\quad \nu_i = \mu_i = 0.
			\end{align*}
			It seems to us that the stability and large-time behavior in the following cases have not been investegated yet:
			\begin{align*}
				&A.\quad \nu_i,\delta_2 > 0 \quad \text{and}\quad \mu_i = \delta_1 = 0;
				&&B.\quad \mu_i,\delta_1 > 0 \quad\text{and}\quad \nu_i = \delta_2 = 0;
				\\
				&C.\quad \nu_2, \mu_2,\delta_2 > 0 \quad\text{and}\quad  \nu_1 = \mu_1 = \delta_1 = 0;
				&&D.\quad \nu_1,\mu_2, \delta_1 > 0 \quad\text{and}\quad \nu_2 = \mu_1 = \delta_2 = 0;
				\\
				&E.\quad \nu_1,\mu_2, \delta_2 > 0 \quad\text{and}\quad \nu_2 = \mu_1 = \delta_1 = 0;
				&&F.\quad \nu_i,\mu_1 > 0 \quad\text{and}\quad \mu_2 = \delta_i = 0;
				\\
				&G.\quad \nu_i,\mu_2 > 0 \quad\text{and}\quad \mu_1 = \delta_i = 0;
				&&H.\quad \nu_1, \mu_i > 0 \quad\text{and}\quad \nu_2 = \delta_i= 0;
				\\
				&I.\quad \nu_2,\mu_i > 0 \quad\text{and}\quad \nu_1 = \delta_i = 0.
			\end{align*}
		\end{enumerate}
	\end{remark}
	\begin{remark}
		\begin{enumerate} \label{r3} We add further comments on the GWP and stability as follows:
			\item[1.] We remark that the assumption $\nu_2 > 0$ and $\mu_1 > 0$ implies the global in time control of $L^2$ of the vorticity $w = \partial_1 v_2 - \partial_2 v_1$ and thus yields $L^2$ of the full velocity gradient $\nabla v$ as well by using the divergence-free condition of $v$. This is also true for higher regularity in Sobolev norms. This observation is mentioned and used repeatedly, see for example the proof of 
			Theorem 1.1, Step 2a. %Therefore, this assumption recovers the full velocity dissipation effect. %However, we note that: a) In order to obtain the uniqueness for $L^2$ data on $\mathbb{R}^2$ with an additional term $\lambda_2 v_2$ given in Theorem 1.1, we need to work more carefully campared to the techniques in [6,31,32,37,49]; b) The stability issue is not fully understood in the case without any thermal dissipation in whole space $\mathbb{R}^2$, see the recent results on $\mathbb{T}^2$ in [60]. 
			
			\item[2.] It is worth mentioning stability results proved in \cite[$\Omega = \mathbb{T} \times \mathbb{R}$]{Adhikari_BenSaid_Pandey_Wu_2022},  \cite[$\Omega = \mathbb{R}^2$]{Lai-Wu-Zhong_2021} and \cite[ $\Omega = \mathbb{R}^2$]{BenSaid_Pandey_Wu_2022}, which were obtained by the effect of enhanced dissipation caused by the damped wave structure of the corresponding systems with the dissipation terms given by $(\partial_{11}v,\partial_{22} \theta)$, $(\partial_{22}v,\theta)$ and $(\partial_{22}v,\partial_{11}\theta)$, respectively.
			%It should be mentioned that one of important points to obtain stability results to the two-dimensional Boussinesq equations with various type of spatial domains and of dissipation terms such as in [2, $\Omega = \mathbb{T} \times \mathbb{R}$], in [48, $\Omega = \mathbb{R}^2$], and in [56, $\Omega = \mathbb{R}^2$] (with the dissipation terms given by $(\partial_{11}v,\partial_{22} \theta)$, $(\partial_{22}v,\theta)$ and $(\partial_{22}v,\partial_{11}\theta)$, respectively) is that the authors discovered the enhanced dissipation due
			%to the damped wave structure of the considering systems. 
			More precisely, this hidden wave structure provides more smoothing and stabilizing properties compared to the original systems. It also allows us to define suitable energy forms (which contain extra crucial terms) and to obtain good energy bounds in which the usual bootstrap argument can be applied using the smallness of the initial data. 
			%In addition, for instance in [2] they also take advantages of the domain $\Omega = \mathbb{T} \times \mathbb{R}$, where a Poincar\'{e}-type inequality can be proved. However, similar results as in [2] on the whole space $\mathbb{R}^2$ are still unknown. Similarly, in [48] with the damping term $\theta$, the authors can design a suitable Lyapunov functional, which plays a crucial role in the proof. Moreover, in [56], with $\partial_{22}\theta$ instead of the temperature damping term, the authors use the special coupling in the system, which yields an extremely important estimate of $\partial_1 v_2$ in $L^2_tL^2_x$ in which they are able to not only close the $H^2$ estimate, but also prove the uniqueness at the $H^2$ level (which is not known at the $H^1$ level).   
			
			\item[3.] Previously, in Remark 1.2-3, we listed some cases in which the stability has not been considered yet. However, as pointed out by the referee, the cases $A$ (for $\nu_1 = \nu_2 > 0$ and $\delta_2 > 0$) and $B$ (for $\mu_1 = \mu_2 > 0$ and $\delta_1 > 0$) can be solved by using the techniques of enhanced dissipation provided in \cite[$\Omega = \mathbb{T} \times \mathbb{R}$]{Adhikari_BenSaid_Pandey_Wu_2022} and \cite[ $\Omega = \mathbb{R}^2$]{BenSaid_Pandey_Wu_2022}. In addition, in bounded domains $\Omega$ with suitable boundary conditions, the cases $F$ ($\nu_1,\nu_2,\mu_1 > 0$) and $I$ ($\nu_2,\mu_1,\mu_2 > 0$) can be resolved as in \cite[$\Omega$ bounded]{Doering_Wu_Zhao_Zheng_2018} as well. However, it is not clear, mainly due to the lack of a Poincar\'{e}-type inequality, whether or not the same results could be proved in the latter cases with $\Omega = \mathbb{R}^2$. On the other hand, due to a different structure in the case $G$ ($\nu_1,\nu_2,\mu_2 > 0$) compared to $F$ and $I$, the $H^2$ stability does not seem to be obvious only by adapting the idea in \cite[$\Omega$ bounded]{Doering_Wu_Zhao_Zheng_2018}.
			% by using the remark mentioned previously in the first point and also the Poincar\'{e} inequality. Indeed, the stabilty in these two latter cases should be understood in the sense of eventual time. Moreover, it is not clear to us whether or not the same results can be obtained in the case where $\Omega = \mathbb{R}^2$ instead of bounded ones, which is mainly due to the lack of a Poincar\'{e}-type inequality. It seems to us that similar results are difficult to obtain in the case $G$. More precisely, in this case we are able to close $H^1$ estimate due to the vanishing of the nonlinear term in $\mathbb{R}^2$. Unfortunately, this vanishing property does not hold either in bounded domains or for higher regularity estimates. 
			
			%We also note that due to the vanishing of the nonliner term in $\mathbb{R}^2$ when considering $H^1$ estimate of $v$, which and the techniques in [21] can be applied to the case $G$ for obtaning stability results.
		\end{enumerate}
	\end{remark}

	The rest of the paper is organized as follows: The proofs of Theorems \ref{theo_nu2_mu1_de} and \ref{theo_nu2_mu1_de2}, and some technical estimates are provided in Sections \ref{sec:wp}, \ref{sec:s_ltb} and \ref{sec:app}, respectively. 
	
	%
	%-----------------------------------------------
	\section{Proof of Theorem \ref{theo_nu2_mu1_de}} \label{sec:wp}
	%-----------------------------------------------
	%
	
	In this section, we will provide a proof of Theorem \ref{theo_nu2_mu1_de}, which uses the ideas in \cite{Boardman-Ji-Qiu-Wu_2019, Fefferman-McCormick-Robinson-Rodrigo_2014,He-Ma-Sun_2022,Hu-Kukavica-Ziane_2015,Hu-Wang-Wu-Xiao-Yuan_2018,Majda_Bertozzi_2002}. 
	
	\begin{proof}[Proof of Theorem \ref{theo_nu2_mu1_de}]
		The proof contains several steps as follows.
		
		\textbf{Step 1: Approximate system and local existence.} Let us fix $n \in \mathbb{R}$ with $n > 0$. Assume that $(v_0,\theta_0) \in H^\delta \times H^s$ with $\text{div}\,v_0 = 0$. An approximate system of \eqref{B} is given by 
		\begin{equation} \label{B_app} 
			\frac{d}{dt} (v^n,\theta^n) =
			(F^n_1,F^n_2)(v^n,\theta^n),
			\quad \text{div}\,v^n = 0 \quad \text{and} 
			\quad (v^n,\theta^n)(0) = T_n(v_0,\theta_0),
		\end{equation}
		where 
		\begin{align*}
			F^n_1(v^n,\theta^n) &:= -\mathbb{P}(T_n(v^n \cdot \nabla v^n)) + \mathbb{P}(\nu_2\partial_{22}v^n_1,\mu_1\partial_{11}v^n_2) + \lambda_1\mathbb{P}(\theta^n e_2),
				\\
			F^n_2(v^n,\theta^n) &:= -T_n(v^n \cdot \nabla \theta^n) - \lambda_2 v^n_2.
		\end{align*}	
		Here $T_n$ and $\mathbb{P}$ are the usual Fourier truncation operator and Leray projection\footnote{As usual, the Fourier transform, $T_n$ and $\mathbb{P}$ are defined by
		\begin{align*}
			\mathcal{F}(f)(\xi) &:= \int_{\mathbb{R}^2} \exp\{-i\xi\cdot x\} f(x) \,dx \quad \text{for } \xi \in \mathbb{R}^2,
			\\
			\mathcal{F}(T_n(f))(\xi) &:= \textbf{1}_{B_n}(\xi) \mathcal{F}(f)(\xi) \quad \text{for } n \in \mathbb{R}, n > 0,\xi \in \mathbb{R}^2,
			\\
			\mathbb{P}(f) &:= f + \nabla(-\Delta)^{-1} \text{div} \,f.
		\end{align*} 
		Here $\textbf{1}_{B_n}$ is the characteristic function of $B_n$, where $B_n$ is the ball of radius $n$ centered at the origin.}, respectively. 
		For $s \in \mathbb{R}, s \geq 0$, we define
		\begin{align*}
			H^s_n &:= \left\{h \in H^s : \text{supp}(\mathcal{F}(h)) \subseteq B_n\right\}, \quad 
			V^s_n := \left\{h \in H^s_n : \text{div}\, h = 0\right\},
			\\
			F^n &: V^\delta_n \times H^s_n \to V^\delta_n \times H^s_n
			\qquad \text{with}\qquad
			(v^n,\theta^n) \mapsto F^n(v^n,\theta^n) := (F^n_1,F^n_2).
		\end{align*}
		The space $V^\delta_n \times H^s_n$ is equipped with the following norm\footnote{For $s \in \mathbb{R}$, $\mathcal{F}(J^sf)(\xi) := (1 + |\xi|^2)^{\frac{s}{2}} \mathcal{F}(f)(\xi)$  for $\xi \in \mathbb{R}^2$ and $\|f\|_{H^s} := \|J^s f\|_{L^2}$ with $H^0 \equiv L^2$.}
		\begin{equation*}
			\|(v^n,\theta^n)\|_{\delta,s} := \|v^n\|_{H^\delta} + \|\theta^n\|_{H^s}. 
		\end{equation*}
		It can be checked that $F^n$ is well-defined and is locally Lipschitz continuous as well. Then the Picard theorem (see \cite[Theorem 3.1]{Majda_Bertozzi_2002}) implies that there exists a unique solution $(v^n,\theta^n) \in C^1([0,T^n_*),V^\delta_n \times H^s_n)$ for some $T^n_* > 0$. In addition, if $T^n_* < \infty$ then (see \cite[Theorem 3.3]{Majda_Bertozzi_2002})
		\begin{equation*}
			\lim_{t\to T^n_*} \left(\|v^n(t)\|_{H^\delta} + \|\theta^n(t)\|_{H^s}\right) = \infty.
		\end{equation*}
		
		\textbf{Step 2: Global existence and uniform bound.}
		We will assume $T^n_* < \infty$ and prove that
		\begin{equation*}
			\esssup_{t \in (0,T^n_*)} \left(\|v^n(t)\|^2_{H^\delta} + \|\theta^n(t)\|^2_{H^s}\right)  < \infty,
		\end{equation*}
		which leads to a contradiction with the previous step and $T^n_* = \infty$. Note that if $(v^n,\theta^n) \in V^\delta_n \times H^s_n$ then $T_n(v^n,\theta^n) = (v^n,\theta^n)$ in $L^2$ sense. To avoid repeating, we will give the full proof only for some cases here, and skip other cases in which whose proof share similar idea. However, their full proofs can be found in Appendix A (see Section \ref{sec:app}). 
		 
		\textbf{2a) The case $s = 0$ and $\delta \in [0,1]$.} We will consider the cases $\delta = 0$, $\delta \in (0,1)$ and $\delta = 1$, respectively, as follows. 
		
		$\bullet$ For $\delta = 0$, by multiplying $(v^n,\theta^n)$ to \eqref{B_app}, we obtain the energy estimates
		\begin{align*}
			\frac{1}{2}\frac{d}{dt} \|v^n\|^2_{L^2} + \min\{\nu_2,\mu_1\} \|\nabla v^n\|^2_{L^2} &\leq |\lambda_1|\|v^n\|_{L^2}\|\theta^n\|_{L^2},
			\\
			\frac{1}{2}\frac{d}{dt} \|\theta^n\|^2_{L^2}  &\leq |\lambda_2|\|v^n\|_{L^2}\|\theta^n\|_{L^2},
		\end{align*}
		where we used the identity that $\|\nabla v^n\|_{L^2} = \|w^n\|_{L^2}$ with $w^n := \partial_1v^n_2 - \partial_2v^n_1$, the 2D vorticity. It implies that for $t \in (0,T^n_*)$ 
		\begin{equation*}
			\|v^n(t)\|^2_{L^2} + \|\theta^n(t)\|^2_{L^2} + \int^t_0 \|v^n\|^2_{H^1} \,d\tau \leq C(T^n_*,\nu_2,\mu_1,\lambda_i,v_0,\theta_0).
		\end{equation*}
		
		$\bullet$ For $\delta \in (0,1)$, we apply $\Lambda^\delta$ to the equation of $v^n$ and multiply $\Lambda^\delta v^n$ to find that\footnote{Recall that for $s \in \mathbb{R}$, $\|f\|_{\dot{H}^s} := \|\Lambda^s f\|_{L^2}$ where  $\mathcal{F}(\Lambda^sf)(\xi) := |\xi|^s\mathcal{F}(f)(\xi)$ for $\xi \in \mathbb{R}^2$.} 
		\begin{equation*}
			\frac{1}{2}\frac{d}{dt} \|v^n\|^2_{\dot{H}^\delta}  + \min\{\nu_2,\mu_1\}\|v^n\|^2_{\dot{H}^{\delta+1}} \leq I_{01} + I_{02},
		\end{equation*}
		where we used the fact that $\nabla w^n = (\Delta v^n_2,-\Delta v^n_1)$ to obtain the second term on the left-hand side and for some $\epsilon \in (0,1)$, since $2\delta \leq \delta + 1$
		\begin{align*}
			I_{01} &:= \lambda_1\int_{\mathbb{R}^2} \Lambda^\delta(\mathbb{P}(\theta^n e_2)) \cdot \Lambda^\delta v^n\,dx \leq C(T^n_*,\nu_2,\mu_1,\lambda_i,v_0,\theta_0,\epsilon) + \epsilon\min\{\nu_2,\mu_1\} \|v^n\|^2_{\dot{H}^{\delta+1}};
			\\
			I_{02} &:= -\int_{\mathbb{R}^2} \Lambda^\delta(\mathbb{P} (T_n (v^n \cdot \nabla v^n)) \cdot \Lambda^\delta v^n\,dx
			\leq \|v^n\|_{L^\frac{2}{1-\delta}} \|\nabla v^n\|_{L^2} \|\Lambda^{2\delta} v\|_{L^\frac{2}{\delta}} 
			\\
			&\leq C(\delta,\nu_2,\mu_1,\epsilon)\|v^n\|^2_{\dot{H}^\delta} \|\nabla v^n\|^2_{L^2} + \epsilon \min\{\nu_2,\mu_1\}\|v^n\|^2_{\dot{H}^{\delta+1}}, 
		\end{align*}
		here we used the following Sobolev inequalities (see \cite{Bahouri-Chemin-Danchin_2011})
		\begin{align*}
			&&\|f\|_{L^{p_0}} &\leq C(p_0,s_0) \|f\|_{\dot{H}^{s_0}}  & &\text{for}\quad s_0 \in [0,1), p_0 = \frac{2}{1-s_0},&&
			\\
			&&\|f\|_{\dot{H}^{s_1}} &\leq C(s_1,s_2) \|f\|^{\alpha_0}_{L^2} \|f\|^{1-\alpha_0}_{\dot{H}^{s_2}} &
			&\text{for}\quad s_1,s_2 \in (0,\infty), s_1 < s_2, \alpha_0 = 1 - \frac{s_1}{s_2}.&&
		\end{align*}
		By choosing $\epsilon = \frac{1}{4}$, for $t \in (0,T^n_*)$
		\begin{equation*}
			\|v^n(t)\|^2_{H^\delta} + \|\theta^n(t)\|^2_{L^2} + \int^t_0 \|v^n\|^2_{H^{\delta+1}} \,d\tau \leq C(T^n_*,\delta,\nu_2,\mu_1,\lambda_i,v_0,\theta_0).
		\end{equation*}
		
		$\bullet$ Similarly, if $\delta = 1$ then
		\begin{equation*}
			\frac{d}{dt} \|v^n\|^2_{\dot{H}^1} + \min\{\nu_2,\mu_1\} \|v^n\|^2_{\dot{H}^2} \leq C(\nu_2,\mu_1,\lambda_1)\|\theta^n\|^2_{L^2},
		\end{equation*}
		which yields for $t \in (0,T^n_*)$
		\begin{equation*}
			\|v^n(t)\|^2_{H^1} + \|\theta^n(t)\|^2_{L^2} + \int^t_0 \|v^n\|^2_{H^2}\,d\tau \leq C(T^n_*,\nu_2,\mu_1,\lambda_i,v_0,\theta_0).
		\end{equation*}
		
		\textbf{2b) The case $s \in (0,1)$ and $\delta \in (1,s+1]$.} Similarly,
		\begin{equation*} 
			\frac{1}{2}\frac{d}{dt}\|v^n\|^2_{\dot{H}^\delta}  + \min\{\nu_2,\mu_1\}\|v^n\|^2_{\dot{H}^{\delta+1}} \leq I_1 + I_2,
		\end{equation*}
		where for some $\epsilon \in (0,1)$, since $2\delta - s \leq \delta + 1$
		\begin{align*}
			I_1 &:= \lambda_1 \int_{\mathbb{R}^2} \Lambda^s (\mathbb{P}(\theta^n e_2)) \cdot \Lambda^{2\delta-s} v^n \,dx 
			\\
			&\leq C(T^n_*,\nu_2,\mu_1,\lambda_i,v_0,\theta_0,\epsilon) \left(\|\theta^n\|^2_{\dot{H}^s} + 1\right) + \epsilon \min\{\nu_2,\mu_1\} \|v^n\|^2_{\dot{H}^{\delta+1}},
		\end{align*}
		and for $\sigma := \delta - 1 \in (0,s] \subset (0,1)$,
		\begin{align*}
			 I_2 &:= -\int_{\mathbb{R}^2} \Lambda^\sigma(v^n \cdot \nabla v^n) \cdot \Lambda^{\delta+1} v^n \,dx
			 \\
			 &\leq C(\delta) \|\Lambda^{\delta+1}v^n\|_{L^2} \times
			 \begin{cases}
			 	\|\Lambda^{\sigma} v^n\|_{L^4}\|\nabla v^n\|_{L^4} + \|v^n\|_{L^\frac{4}{1-2\sigma}}\|\Lambda^\sigma\nabla v^n\|_{L^\frac{4}{1+2\sigma}} \quad &\text{if} \quad \sigma \in (0,\frac{1}{2}),
			 	\\
			 	\|\Lambda^\sigma v^n\|_{L^4}\|\nabla v^n\|_{L^4} + \|v^n\|_{L^6}\|\Lambda^\sigma \nabla v^n\|_{L^3} \quad &\text{if}\quad \sigma \in [\frac{1}{2},1),
			 \end{cases}
		\end{align*}
		here we used the following inequality (see \cite{Grafakos-Oh_2014}) for $1 < p_i,q_i \leq \infty$, $i \in \{1,2\}$, $s_0 > 0$ and $C = C(s_0,p_i,q_i) > 0$ 
		\begin{equation*}
			\|\Lambda^{s_0}(fg)\|_{L^2} \leq C\left(\|\Lambda^{s_0} f\|_{L^{p_1}}\|g\|_{L^{q_1}} + \|f\|_{L^{p_2}}\|\Lambda^{s_0} g\|_{L^{q_2}} \right) \qquad \text{with} \quad \frac{1}{p_i} + \frac{1}{q_i} = \frac{1}{2}.
		\end{equation*}
		Moreover, 
		\begin{align*}
			\|\Lambda^\sigma v^n\|_{L^4}, \|v^n\|_{L^\frac{4}{1-2\sigma}} &\leq C(\delta)\|\Lambda^{\sigma + \frac{1}{2}} v^n\|_{L^2} \leq C(\delta)\|v^n\|^\frac{3}{2(\delta+1)}_{L^2} \|\Lambda^{\delta+1} v^n\|^\frac{2\delta-1}{2(\delta+1)}_{L^2},
			\\
			\|\nabla v^n\|_{L^4}, \|\Lambda^\sigma \nabla v^n\|_{L^\frac{4}{1+2\sigma}} &\leq C(\delta)\|\Lambda^\frac{1}{2} \nabla v^n\|_{L^2} \leq C(\delta)\|\nabla v^n\|^\frac{2\delta-1}{2\delta}_{L^2} \|\Lambda^{\delta+1} v^n\|^\frac{1}{2\delta}_{L^2},
			\\
			\|v^n\|_{L^6} &\leq C\|v^n\|^\frac{1}{3}_{L^2} \|\nabla v^n\|^\frac{2}{3}_{L^2}, 
			\\
			\|\Lambda^\sigma \nabla v^n\|_{L^3} &\leq C(\delta)\|\Lambda^{\sigma + \frac{1}{3}} \nabla v^n\|_{L^2} \leq  C(\delta)\|\nabla v^n\|^\frac{2}{3\delta}_{L^2} \|\Lambda^{\delta+1} v^n\|^\frac{3\delta-2}{3\delta}_{L^2},
		\end{align*} 
		which yields
		\begin{align*}
			I_2 &\leq C(\delta)\|v^n\|^\frac{3}{2(\delta+1)}_{L^2} \|\nabla v^n\|^\frac{2\delta-1}{2\delta}_{L^2} \|\Lambda^{\delta+1} v^n\|^\frac{4\delta^2+2\delta+1}{2(\delta+1)\delta}_{L^2} + C(\delta)\|v^n\|^\frac{1}{3}_{L^2} \|\nabla v^n\|^\frac{2\delta+2}{3\delta}_{L^2} \|\Lambda^{\delta+1} v^n\|^\frac{6\delta-2}{3\delta}_{L^2}.
		\end{align*}
		Therefore, by choosing $\epsilon = \frac{1}{6}$ 
		\begin{equation*}
			\frac{d}{dt}\|v^n\|^2_{H^\delta}  + \min\{\nu_2,\mu_1\}\|v^n\|^2_{H^{\delta+1}} \leq C(T^n_*,\delta,\nu_2,\mu_1,\lambda_i,v_0,\theta_0) \left(\|\theta^n\|^2_{\dot{H}^s} + 1\right). 
		\end{equation*}
		In addition, 
		\begin{equation*}
			\frac{1}{2} \frac{d}{dt}\|\theta^n\|^2_{\dot{H}^s} = I_3 + I_4, 
		\end{equation*}
		where 
		\begin{align*}
			I_3 &:= - \lambda_2 \int_{\mathbb{R}^2} \Lambda^s v^n_2 \Lambda^s \theta^n\,dx 
			\leq |\lambda_2| \|v^n\|_{H^2} \|\theta^n\|_{\dot{H}^s};
			\\
			I_4 &:= -\int_{\mathbb{R}^2} [\Lambda^{s} 	(v^n \cdot \nabla \theta^n) - v^n \cdot \nabla \Lambda^{s}\theta^n] \Lambda^{s} \theta^n \,dx 
			\\
			&\leq C(s) \left(\|\Lambda^{s+1}v^n\|_{L^\frac{2}{s}}\|\theta^n\|_{L^\frac{2}{1-s}} + \|\nabla v^n\|_{L^\infty} \|\Lambda^s\theta^n\|_{L^2}\right)\|\Lambda^s\theta^n\|_{L^2}
			\\
			&\leq C(s) \|v^n\|_{H^2}\|\theta^n\|^2_{\dot{H}^s} \left(1 + \log \frac{\|v^n\|_{H^{\delta+1}}}{\|v^n\|_{H^2}}\right)^\frac{1}{2},
			%&\leq C(s) \|\Lambda^{s}\theta^n\|_{L^2} (\|\Lambda^{s}\theta^n\|_{L^2} + \|\theta^n\|_{L^2}) \|v^n\|_{H^2}  \left(1 + \log \frac{\|v^n\|_{H^{\delta+1}}}{\|v^n\|_{H^2}}\right)^\frac{1}{2},
		\end{align*}
		here we used the following commutator estimate (see \cite{Kukavica-Wang-Ziane_2016}) for $\sigma_0 \in (0,1)$, $p_1,q_1,p_2 \in [2,\infty]$, $q_2 \in [2,\infty)$ and $C = C(\sigma_0,p_i,q_i)$, $i \in \{1,2\}$ 
		\begin{equation*}
			\|\Lambda^{\sigma_0}(f \cdot \nabla g) - f \cdot \nabla \Lambda^{\sigma_0}g\|_{L^2} \leq C \left(\|\Lambda^{\sigma_0+1}f\|_{L^{p_1}}\|g\|_{L^{q_1}} + \|\nabla f\|_{L^{p_2}}\|\Lambda^{\sigma_0} g\|_{L^{q_2}}\right) \qquad \text{for}\quad \frac{1}{p_i} + \frac{1}{q_i} = \frac{1}{2},
		\end{equation*}
		and the Brezis-Gallouet type inequality (see \cite{Brezis-Gallouet_1980} for $s_0 = 1$ and \cite{Hu-Kukavica-Ziane_2015} for $s_0 \in (0,1)$)
		\begin{equation*}
			\|\nabla f\|_{L^\infty} \leq C\|\mathcal{F}(\nabla f)\|_{L^1} \leq C(s_0)\|f\|_{H^2}  \left(1 + \log \frac{\|f\|_{H^{2+s_0}}}{\|f\|_{H^2}}\right)^\frac{1}{2} \qquad \forall s_0 \in (0,1],
		\end{equation*}
		by choosing $\sigma_0 = s \in (0,1)$ and $s_0 = \delta - 1 \in (0,s] \subset (0,1]$. Therefore, 
		\begin{equation*}
			\frac{d}{dt}\left(\|\theta^n\|^2_{\dot{H}^{s}} + 1\right) \leq C(s)  \left(\|\theta^n\|^2_{\dot{H}^{s}} + 1\right) \|v^n\|_{H^2}  \left(1 + \log \frac{\|v^n\|^2_{H^{\delta+1}}}{\|v^n\|^2_{H^2}}\right)^\frac{1}{2},
		\end{equation*}
		and from the fact that
		\begin{equation*}
			\int^t_0 \|v^n\|^2_{H^2} \,d\tau \leq C(T^n_*,\nu_2,\mu_1,\lambda_i,v_0,\theta_0),
		\end{equation*}
		by using a Gronwall-type inequality (see \cite[Lemma 2.3]{Hu-Kukavica-Ziane_2015}) for $t \in (0,T^n_*)$
		\begin{equation} \label{Hs_estimate}
			\|v^n(t)\|^2_{H^\delta} + \|\theta^n(t)\|^2_{H^{s}}+ \int^t_0 \|v^n\|^2_{H^{\delta+1}} \,d\tau \leq C(T^n_*,\delta,s,\nu_2,\mu_1,\lambda_i,v_0,\theta_0).
		\end{equation}
		
		\textbf{2c) The case $s = 1$ and $\delta \in (1,2]$.} 
		If $\delta \in (1,2)$ then $\delta - 1 \in (0,1)$ and $v^n$ can be bounded in the same way as in the previous one. We now focus on the case $\delta = 2$. It follows that $\sigma = 1$ and
		\begin{align*}
			I_2 
			&\leq C(\delta) \left(\|\Lambda^\sigma v^n\|_{L^4}\|\nabla v^n\|_{L^4} + \|v^n\|_{L^6}\|\Lambda^\sigma \nabla v^n\|_{L^3}\right)\|\Lambda^{\delta+1}v^n\|_{L^2}
			\\
			&\leq C \left(\|\nabla v^n\|^\frac{3}{2}_{L^2}\|\Lambda^3 v^n\|^\frac{1}{2}_{L^2} + \|v^n\|^\frac{1}{3}_{L^2} \|\nabla v^n\|_{L^2}\|\Lambda^3 v^n\|^\frac{2}{3}_{L^2}\right)\|\Lambda^3v^n\|_{L^2},
		\end{align*}
		which allows us to work as in Step 2b. In addition, 
		\begin{align*}
			\frac{1}{2}\frac{d}{dt} \|\theta^n\|^2_{\dot{H}^1} &= - \int_{\mathbb{R}^2} \nabla (T_n(v^n \cdot \nabla \theta^n)) \cdot \nabla \theta^n \,dx - \lambda_2\int_{\mathbb{R}^2} \nabla v^n_2 \cdot \nabla \theta^n \,dx
			\\
			&\leq \|\nabla v^n\|_{L^\infty} \|\theta^n\|^2_{\dot{H}^1} + |\lambda_2||v^n\|_{H^2}\|\theta^n\|_{\dot{H}^1}.
		\end{align*}
		Thus, the proof follows as in the previous case and \eqref{Hs_estimate} follows for $s = 1$ and $\delta \in (1,2]$.
		
		\textbf{2d) and 2e) The case $s \in (1,2]$ and $\delta \in [s,s+1]$.} The cases $s \in (1,2)$ and $s = 2$ share similar ideas as in Steps 2b and 2c, respectively. Since the proofs are long and to avoid repeating, we skip the details here, however it can be found in  Appendix A (see Section \ref{sec:app}).
		
		\textbf{2f) The case $s > 2$ and $\delta \in [s,s+1]$.} We separate the cases $\delta = s$ and $\delta \in (s,s+1]$ as follows.
		
		$\bullet$ The case $\delta = s$. Applying $(\Lambda^{s},J^{s})$ to \eqref{B_app} and multiplying $(\Lambda^{s} v^n,J^{s}\theta^n)$, respectively, we obtain
		\begin{equation*}
			\frac{1}{2}\frac{d}{dt}\left(\|v^n\|^2_{\dot{H}^{s}} + \|\theta^n\|^2_{H^{s}}\right) + \min\{\nu_2,\mu_1\} \|v^n\|^2_{\dot{H}^{s+1}} 
			\leq \sum^{25}_{j=22} I_j,
		\end{equation*}
		where for some $\epsilon \in (0,1)$ 
		\begin{align*}
			I_{22} &:= \lambda_1\int_{\mathbb{R}^2} \Lambda^{s}(\mathbb{P}(\theta^n e_2)) \cdot \Lambda^{s} v^n\,dx \leq |\lambda_1| \left(\|\theta^n\|^2_{\dot{H}^{s}} +  \|v^n\|^2_{\dot{H}^{s}}\right);
			\\
			I_{23} &:= -\int_{\mathbb{R}^2} \Lambda^{s}(\mathbb{P}(T_n(v^n \cdot \nabla v^n)) \cdot \Lambda^{s} v^n \,dx
			\\
			&\leq C(s,\nu_2,\mu_1,\epsilon) \left(\|v^n\|^2_{L^\infty} + \|\nabla v^n\|_{L^\infty}\right)\|v^n\|^2_{\dot{H}^{s}} + \epsilon\min\{\nu_2,\mu_1\}\|v^n\|^2_{\dot{H}^{s+1}};
			\\
			I_{24} &:= -\lambda_2\int_{\mathbb{R}^2} J^{s}v^n_2 J^{s} \theta^n \,dx \leq |\lambda_2|\left(\|v^n\|^2_{H^{s}} + \|\theta^n\|^2_{H^{s}}\right);
			\\
			I_{25} &:= -\int_{\mathbb{R}^2} J^{s}(T_n(v^n \cdot \nabla \theta^n)) J^{s} \theta^n \,dx
			\\
			&\leq C(s)\left(\|v^n\|^2_{H^{s}} + \|\theta^n\|^2_{H^{s}}\right) \left(\|\nabla v^n\|_{L^\infty} + \|\nabla \theta^n\|_{L^\infty}\right);
		\end{align*}
		here we used the following commutator estimate (see \cite{Kato-Ponce_1988}) 
		\begin{equation*}
			\|J^r(fg) - f J^r g\|_{L^2} \leq C(r)\left(\|J^r f\|_{L^2} \|g\|_{L^\infty} +  \|\nabla f\|_{L^\infty}\|J^{r-1} g\|_{L^2}\right) \qquad \forall r > 0.
		\end{equation*} 
		We apply $\Lambda^{s}$ instead of $J^{s}$ directly to the equation of $v^n$ since the proof can be used to the case $\lambda_1$ is a function as well, see the point number 1 in Remark \ref{rm1}. By choosing $\epsilon = \frac{1}{2}$, it follows that
		\begin{equation*}
			\frac{d}{dt}Y^n_s(t) + \min\{\nu_2,\mu_1\} \|v^n\|^2_{H^s} \leq C(s,\nu_2,\mu_1,\lambda_i)\left(1 + \|v^n\|^2_{L^\infty} + \|\nabla v^n\|_{L^\infty} + \|\nabla \theta^n\|_{L^\infty}\right) Y^n_s(t),
		\end{equation*}
		where 
		\begin{equation*}
			Y^n_s(t) := \|v^n(t)\|^2_{H^{s}} + \|\theta^n(t)\|^2_{H^{s}} \qquad \text{for} \quad t \in (0,T^n_*). 
		\end{equation*}
		Thus, to bound $Y^n_s(t)$, one also needs to estimate the time integral of $\|\nabla \theta^n\|_{L^\infty}$ in a suitable way. In order to do that, we consider the following $L^p$ estimate from \eqref{B_app} 
		\begin{equation*}
			\frac{1}{p}\frac{d}{dt} \|\nabla \theta^n\|^p_{L^p} \leq \|\nabla v^n\|_{L^\infty} \|\nabla \theta^n\|^p_{L^p} + \|\nabla v^n_2\|_{L^p}\|\nabla \theta^n\|^{p-1}_{L^p} \qquad \text{for}\quad p \in [1,\infty),
		\end{equation*}
		which together with letting $p \to \infty$ and using Step 2e implies that for $t \in (0,T^n_*)$
		\begin{align*}
			\|\nabla \theta^n(t)\|_{L^\infty} &\leq \left(\|\nabla \theta^n(0)\|_{L^\infty} +  \int^t_0 \|\nabla v^n\|_{L^\infty} \,d\tau\right) \exp\left(\int^t_0 \|\nabla v^n\|_{L^\infty} \,d\tau\right)
			\\
			&\leq \left(\|\theta^n(0)\|_{H^{s}} +  \int^t_0 \|v^n\|_{H^3} \,d\tau\right) \exp\left(\int^t_0 \|v^n\|_{H^3} \,d\tau\right)
			\\
			&\leq C(T^n_*,s,\nu_2,\mu_1,\lambda_i,v_0,\theta_0).
		\end{align*}
		Therefore, \eqref{Hs_estimate} follows for $s > 2$ and $\delta = s$.
		
		$\bullet$ The case $\delta \in (s,s+1]$. The proof of this case uses a similar idea as in the case $\delta = s$. The full proof can be found in Appendix A (see Section \ref{sec:app}). 
		
		\textbf{2g) The case $s > 1$ and $\delta \in [s-1,s)$, $\delta > 1$.} The proof of this case shares similar ideas as in Steps 2a-2f and is given in Appendix A (see Section \ref{sec:app}). 
		
		\textbf{2h) Uniform bound.} Collecting Step 1 and Steps 2a-2g, it implies that there is a contradiction with the assumption $T^n_* < \infty$. Therefore, $T^n_* = \infty$ and by repeating all computations from Step 2a to Step 2g, for any given $T \in (0,\infty)$ (does not depend on $n$) and for $t \in (0,T)$
		\begin{align*}
			&\|v^n(t)\|^2_{H^\delta} +  \|\theta^n(t)\|^2_{H^s} 
			+ \int^t_0 \|v^n\|^2_{H^{\delta+1}} \,d\tau \leq C(T,\delta,s,\nu_2,\mu_1,\lambda_i,v_0,\theta_0).
		\end{align*}

		\textbf{Step 3: Cauchy sequence for $s > 1$.} In this step, we will show that $(v^n,\theta^n)$ is a Cauchy sequence in $L^\infty(0,T;L^2(\mathbb{R}^2))$ for any $T \in (0,\infty)$. Since $s > 1$ then from the previous step, we also have $\delta > 1$. Let us fix $n,m \in \mathbb{R}$ with $m > n > 0$. Assume that $(v^n,\theta^n)$ and $(v^m,\theta^m)$ are two solutions to \eqref{B_app} with the initial data given by $(v^n,\theta^n)_{|_{t=0}} = T_n(v_0,\theta_0)$ and $(v^m,\theta^m)_{|_{t=0}} = T_m(v_0,\theta_0)$,\footnote{This is a new point compared to the previous arXiv (the second one) and the published versions.} respectively. It follows that 
		\begin{equation*}
			\frac{1}{2}\frac{d}{dt} \left(\|v^n-v^m\|^2_{L^2} + \|\theta^n-\theta^m\|^2_{L^2}\right) + \min\{\nu_2,\mu_1\} \|\nabla (v^n-v^m)\|^2_{L^2}= \sum^{44}_{i=42} I_i,
		\end{equation*}
		where 
		\begin{align*}
			I_{42} &:= \int_{\mathbb{R}^2} \mathbb{P}(-T_n(v^n \cdot \nabla v^n) + T_m(v^m \cdot \nabla v^m)) \cdot (v^n-v^m)\,dx,
			\\
			I_{43} &:= \int_{\mathbb{R}^2} (-T_n(v^n \cdot \nabla \theta^n) + T_m(v^m \cdot \nabla \theta^m))(\theta^n-\theta^m)\,dx,
			\\
			I_{44} &:= \int_{\mathbb{R}^2} \lambda_1\mathbb{P}((\theta^n-\theta^m)e_2) \cdot (v^n-v^m) + \lambda_2 (v^m_2-v^n_2)(\theta^n-\theta^m)\,dx
			\\
			&\leq (|\lambda_1|+|\lambda_2|)\left(\|v^n-v^m\|^2_{L^2} + \|\theta^n-\theta^m\|^2_{L^2}\right).
		\end{align*}
		
		Before going to bound the above integrals, we recall that (see \cite{Fefferman-McCormick-Robinson-Rodrigo_2014})
		\begin{align*}
			\|T_n(f) - f\|_{H^{s_1}} &\leq n^{-s_2} \|f\|_{H^{s_1+s_2}} \qquad \text{for} \quad s_1,s_2 \in \mathbb{R}, s_2 \geq 0.
		\end{align*}
		We write $I_{42} = I_{421} + I_{422} + I_{423}$, where $I_{423} = 0$ and 
		\begin{align*}
			I_{421} &:= \int_{\mathbb{R}^2} (T_m-T_n)(v^n \cdot \nabla v^n) \cdot (v^n-v^m)\,dx
			\\
			&\leq n^{-(\delta-1)}\|v^n \cdot \nabla v^n\|_{H^{\delta-1}}\|v^n-v^m\|_{L^2}
			\leq n^{-(\delta-1)}C(T);
			\\
			I_{422} &:= \int_{\mathbb{R}^2} T_m((v^m-v^n) \cdot \nabla v^n) \cdot (v^n-v^m)\,dx
			\\
			&\leq C(T,\epsilon)\|v^n-v^m\|^2_{L^2} + \epsilon \min\{\nu_2,\mu_1\}\|\nabla(v^n-v^m)\|^2_{L^2},
		\end{align*}
		by using the triangle and 2D Ladyzhensaya inequalities, $H^{s}(\mathbb{R}^2)$ is an algebra for $s > 1$ and $C(T) = C(T,\delta,s,\nu_2,\mu_1,\lambda_i,v_0,\theta_0)$ in Step 2h. Similar to $I_{42}$, we write $I_{43} := I_{431} + I_{432} + I_{433}$, where $I_{433} = 0$ and 
		for some fixed $\alpha \in (0,\min\{1,s-1\})$
		\begin{align*}
			I_{431} &:= -\int_{\mathbb{R}^2} (T_n-T_m)(v^n \cdot \nabla \theta^n)(\theta^n-\theta^m)\,dx 
			\leq n^{-(\min\{\delta,s\}-1)}C(T);
			\\
			I_{432} &:= -\int_{\mathbb{R}^2} T_m((v^n-v^m) \cdot \nabla \theta^n)(\theta^n-\theta^m)\,dx
			\\
			&\leq \|v^n-v^m\|_{L^\frac{2}{\alpha}} \|\nabla \theta^n\|_{L^\frac{2}{1-\alpha}} \|\theta^n-\theta^m\|_{L^2} 
			\\
			&\leq C(s)\|v^n-v^m\|_{\dot{H}^{1-\alpha}} \|\nabla \theta^n\|_{\dot{H}^\alpha} \|\theta^n-\theta^m\|_{L^2} 
			\\
			&\leq \epsilon\min\{\nu_2,\mu_1\}\|v^n-v^m\|^2_{H^1} + C(T,\epsilon) \|\theta^n-\theta^m\|^2_{L^2}.
		\end{align*}
		Therefore, by adding the term $\min\{\nu_2,\mu_1\}\|(v^n-v^m)(t)\|^2_{L^2}$ to both sides of the main estimate and choosing $\epsilon = \frac{1}{4}$, we obtain 
		\begin{equation*}
			\frac{d}{dt}E^{nm}(t) + \min\{\nu_2,\mu_1\}\|\nabla(v^n-v^m)\|^2_{L^2} \leq \max\{n^{-(\delta-1)},n^{-(\min\{\delta,s\}-1)}\}C(T) + C(T) E^{nm}(t),
		\end{equation*}
		which implies by using Gronwall inequality that for $t \in (0,T)$
		\begin{equation*}
			E^{nm}(t) + \int^t_0 \|\nabla(v^n-v^m)\|^2_{L^2} \,d\tau \leq C(T)E^{nm}(0) +  \max\{n^{-(\delta-1)},n^{-(\min\{\delta,s\}-1)}\} C(T),
		\end{equation*}
		where
		\begin{equation*}
			E^{nm}(t) := \|(v^n-v^m)(t)\|^2_{L^2} + \|(\theta^n-\theta^m)(t)\|^2_{L^2}.
		\end{equation*}
		That ends the proof of this step by letting $n \to \infty$ with using the mentioned property of $T_n$. 

		\textbf{Step 4: Passing to the limit.} We separately consider the cases $s > 1$ and $s \in [0,1]$ as follows. We denote $\to$, $\rightharpoonup$ and $\overset{\ast}{\rightharpoonup}$ for the usual strong, weak and weak-star convergences, respectively. From the previous step that there exists $(v,\theta)$ such that as $n \to \infty$
		\begin{align*}
			&&(v^n,\theta^n) &\to (v,\theta) &&\text{in} \quad L^\infty(0,T;L^2(\mathbb{R}^2)),&&
			\\
			&&\nabla v^n &\to \nabla v  &&\text{in} \quad L^2(0,T;L^2(\mathbb{R}^2)).&&
		\end{align*}
		
		\textbf{4a) The case $s > 1$.} We also have in this step $\delta > 1$. The above convergences imply by using the Sobolev interpolation inequalities and uniform bound that for all $s' \in (1,\min\{\delta,s\})$ as $n \to \infty$
		\begin{align*}
			&&(v^n,\theta^n) &\to (v,\theta) &&\text{in} \quad L^\infty(0,T;H^{s'}(\mathbb{R}^2)),&&
			\\
			&&\nabla v^n &\to \nabla v  &&\text{in} \quad L^2(0,T;H^{s'}(\mathbb{R}^2)),&&
			\\
			&&(\partial_{22}v^n_1,\partial_{11}v^n_2) &\to (\partial_{22}v_1,\partial_{11}v_2 )  &&\text{in} \quad L^2(0,T;H^{s'-1}(\mathbb{R}^2)).&&
		\end{align*}
		Moreover, as $n \to \infty$
		\begin{equation*}
			T_n(v^n \cdot \nabla v^n,v^n \cdot \nabla \theta^n) \to (v \cdot \nabla v,v \cdot \nabla \theta)  \qquad \text{in} \quad L^\infty(0,T;H^{s'-1}(\mathbb{R}^2)),
		\end{equation*}
		since by using the property of $T_n$, uniform bound and triangle inequality 
		\begin{align*}
			\|T_n(v^n \cdot \nabla v^n) - v \cdot \nabla v\|_{H^{s'-1}} &\leq n^{-(\delta-s')} \|v^n\|^2_{H^\delta}  + \|v^n-v\|_{H^{s'}} \left(\|v^n\|_{H^{s'}} + \|v\|_{H^{s'}}\right),
			\\
			\|T_n(v^n \cdot \nabla \theta^n) - v \cdot \nabla \theta\|_{H^{s'-1}} &\leq n^{-(\min\{\delta,s\}-s')} \|v^n\|_{H^\delta} \|\theta^n\|_{H^{s}} + \|\theta^n-\theta\|_{H^{s'}} \|v^n\|_{H^{s'}}  
			\\
			&\quad+\|v^n-v\|_{H^{s'}} \|\theta\|_{H^{s'}}.
		\end{align*}
		In addition, \eqref{B_app} gives us for $t \in (0,T)$, $\sigma \in \{s'-1,\delta-1\}$ and $\sigma' \in \{s'-1,s-1\}$
		\begin{align*}
			\int^t_0\|\partial_t v^n\|^2_{H^\sigma} \,d\tau &\leq 
			C\int^t_0 \|(v^n \cdot \nabla v^n,  \nu_2\partial_{22}v^n_1,\mu_1\partial_{11}v^n_2,\lambda_1\theta^n)\|^2_{H^\sigma} \,d\tau,
			\\
			\int^t_0\|\partial_t \theta^n\|^2_{H^{\sigma'}} \,d\tau &\leq 
			C\int^t_0 \|(v^n \cdot \nabla \theta^n, \lambda_2 v^n_2)\|^2_{H^{\sigma'}} \,d\tau,
		\end{align*}
		which together with the uniform bound and above strong convergences leads to there exists a subsequence $(v^{n_k},\theta^{n_k})$ such that as $n_k \to \infty$
		\begin{align*}
			&&(\partial_t v^{n_k},\partial_t \theta^{n_k}) &\rightharpoonup (\partial_t v,\partial_t\theta) &&\text{in} \quad L^2(0,T;H^{\delta-1}(\mathbb{R}^2) \times H^{s-1}(\mathbb{R}^2)),&&
			\\
			&&(\partial_t v^{n_k},\partial_t \theta^{n_k}) &\to (\partial_t v,\partial_t \theta) &&\text{in} \quad L^2(0,T;H^{s'-1}(\mathbb{R}^2)),&&
		\end{align*}
		which together with the above strong convergences and \eqref{B_app} implies that in $L^2(0,T;H^{s'-1}(\mathbb{R}^2))$
		\begin{align*}
			\partial_t v + \mathbb{P}(v \cdot \nabla v) &=  \mathbb{P}(\nu_2\partial_{22}v_1,\mu_1\partial_{11}v_2) + \lambda_1\mathbb{P}(\theta e_2), 
			\\
			\partial_t \theta + v \cdot \nabla \theta  &=  -\lambda_2 v_2.
		\end{align*}
		In addition, it can be checked that as $n \to \infty$
		\begin{align*}
			\textnormal{div}\, v^n &\to \textnormal{div}\, v  &&\text{in}\quad L^2(0,T;H^{s'}(\mathbb{R}^2)),
			\\
			(v^n,\theta^n)(0) = T_n(v_0,\theta_0) &\to (v_0,\theta_0) & &\text{in}\quad H^\delta(\mathbb{R}^2) \times H^{s}(\mathbb{R}^2),
		\end{align*}
		which leads to $\textnormal{div}\, v = 0$ and $(v,\theta)(0) = (v_0,\theta_0)$. Then the theorem de Rham (see \cite{Temam_2001}) ensures the existence of a scalar function $\pi$ such that (at least in the sense of distributions)
		\begin{equation*}
			\partial_t v + v \cdot \nabla v + \nabla \pi =  (\nu_2\partial_{22}v_1,\mu_1\partial_{11}v_2) + \lambda_1\theta e_2.
		\end{equation*}
		From the uniform bound in Step 2, we also have as $n_k \to \infty$ 
		\begin{align*}
			&&(v^{n_k},\theta^{n_k}) &\overset{\ast}{\rightharpoonup} (v,\theta)  &&\text{in} \quad L^\infty(0,T;H^\delta(\mathbb{R}^2) \times H^{s}(\mathbb{R}^2)), &&
			\\
			&&\nabla v^{n_k} &\rightharpoonup  \nabla v  &&\text{in} \quad L^2(0,T;H^\delta(\mathbb{R}^2)), &&
		\end{align*}
		which implies that for $s > 1$, $(v,\theta) \in L^\infty(0,T;H^\delta(\mathbb{R}^2)\times H^{s}(\mathbb{R}^2))$ and $v \in  L^2(0,T;H^{\delta+1}(\mathbb{R}^2))$ with  
		\begin{equation*}
			\|v(t)\|^2_{H^\delta} + \|\theta(t)\|^2_{H^{s}} + \int^t_0 \|v\|^2_{H^{\delta+1}} \,d\tau \leq C(T,\delta,s,\nu_2,\mu_1,\lambda_i,v_0,\theta_0) \qquad \text{for}\quad t \in (0,T).
		\end{equation*}
		In fact, after possibly being redefined on a set of measure zero  $v \in C([0,T];H^\delta(\mathbb{R}^2))$  (see \cite{Evans_2010}) since $v \in  L^2(0,T;H^{\delta+1}(\mathbb{R}^2))$ and $\partial_t v \in L^2(0,T;H^{\delta-1}(\mathbb{R}^2))$. Furthermore, we first see that $\theta$ is weak continuous in time with values in $H^{s}(\mathbb{R}^2)$ from the uniform bound in Step 2. In order to  prove that $\theta \in C([0,T];H^{s}(\mathbb{R}^2))$, it sufficies to show that $\|\theta(t)\|_{H^{s}}$ is continuous in time.
		
		$\bullet$ $\|\theta(t)\|_{H^{s}}$ is right-continuous in time. If $s \in (1,2]$ then similar to the proofs of Steps 2d and 2e (with choosing $\epsilon' = \delta -1$) given in Appendix A, since $\delta > 1$ it follows that for $0 \leq t_1 \leq t_2 \leq T$
		\begin{equation*}
			\|\theta(t_2)\|_{H^{s}} \leq \left(\|\theta(t_1)\|_{H^{s}} + C(\lambda_2) \int^{t_2}_{t_1} \|v\|_{H^2}\,d\tau\right) \exp\left(C(\delta)\int^{t_2}_{t_1} \|v\|_{H^{\delta+1}}\,d\tau\right).
		\end{equation*}
		If $s > 2$ then similar to Step 2f, since $s \leq \delta + 1$ we find that for $0 \leq t_1 \leq t_2 \leq T$
		\begin{equation*}
			\|\theta(t_2)\|_{H^{s}} \leq \left(\|\theta(t_1)\|_{H^{s}} + C(s,\lambda_2) \int^{t_2}_{t_1} \|v\|_{H^{\delta+1}}(\|\nabla \theta\|_{L^\infty} + 1) \,d\tau\right) \exp\left(C(\delta,s)\int^{t_2}_{t_1} \|v\|_{H^{\delta+1}}\,d\tau\right).
		\end{equation*}
		The argument in Step 2f allows us to bound $\|\nabla \theta\|_{L^\infty}$ by a constant $C(T,\delta,s,\nu_2,\mu_1,v_0,\theta_0)$. Therefore, in both cases $\|\theta(t)\|_{H^{s}}$ is right-continuous in time.
		
		$\bullet$ $\|\theta(t)\|_{H^{s}}$ is left-continuous in time. By noting that the equation of $\theta$ is time-reversible\footnote{As usual, it is understood in the following sense. Assume that $(v(x,t),\theta(x,t))$ satisfies the $\theta$ equation in \eqref{B} with $\delta_1 = \delta_2 = 0$, i.e.,
		\begin{equation*}
			\partial_t\theta(x,t) + v(x,t)\cdot \nabla \theta(x,t) + \lambda_2v_2(x,t) = 0 \qquad \text{for} \quad (x,t) \in \mathbb{R}^2 \times (0,T).
		\end{equation*}
		If $\theta(x,t)$ is replaced by $\theta(x,-t)$ then 
		\begin{equation*}
			\partial_s\theta(x,s) + v(x,s)\cdot \nabla \theta(x,s) + \lambda_2v_2(x,s) = 0 \qquad \text{for} \quad (x,s) \in \mathbb{R}^2 \times (-T,0).
		\end{equation*}
		Here, we also replaced $v(x,t)$ by  $-v(x,-t)$.
		See also \cite{Fefferman-McCormick-Robinson-Rodrigo_2014}, as in the case of the magnetic field equation in the non-resistive MHD system.}. Then $\|\theta(t)\|_{H^{s}}$ is left-continuous in time as well. 
		
		\textbf{4b) The case $s \in [0,1]$.} It sufficies to consider the case $s = 0$ and other cases follow as a consequence. It follows from the uniform bound in Step 2 that for $t \in (0,T)$
		\begin{align*}
			\|v^n(t)\|^2_{L^2} + \|\theta^n(t)\|^2_{L^2} + \int^t_0 \|v^n\|^2_{H^1} \,d\tau &\leq C(T,\nu_2,\mu_1,\lambda_i,v_0,\theta_0).
		\end{align*}
		In addition, for $(\phi,\varphi) \in H^1(\mathbb{R}^2) \times H^2(\mathbb{R}^2)$ with $\phi = (\phi_1,\phi_2)$, $\|\phi\|_{H^1} \leq 1$ and $\|\varphi\|_{H^2} \leq 1$, it yields for $\tau \in (0,T)$\footnote{Here $(\cdot,\cdot)$ is the standard $L^2$ inner product.}
		\begin{align*}
			\int^{\tau}_0 |(\partial_tv^n,\phi)|^2 +  |(\partial_t\theta^n,\varphi)|^2 \,dt &\leq C\int^{\tau}_0 \|v^n\|^2_{H^1} + \|\theta^n\|^2_{L^2}\,dt \leq C(T,\nu_2,\mu_1,\lambda_i,v_0,\theta_0),
		\end{align*}
		which implies that\footnote{As usual, for $s \in \mathbb{R}$ with $s > 0$, the space $H^{-s}(\mathbb{R}^2)$ can be considered as the dual space of $H^s(\mathbb{R}^2)$, see \cite{Bahouri-Chemin-Danchin_2011}.}
		\begin{equation*}
			(\partial_tv^n,\partial_t\theta^n) \quad \text{is uniformly bounded in} \quad L^2(0,T;H^{-1}(\mathbb{R}^2) \times H^{-2}(\mathbb{R}^2)).
		\end{equation*}
		Therefore, there exists a subsequence (still denoted by) $(v^n,\theta^n)$ and $(v,\theta)$ such that as $n \to \infty$
		\begin{align*}
			&&(v^n,\theta^n) &\overset{\ast}{\rightharpoonup} (v,\theta)  &&\text{in} \quad L^\infty(0,T;L^2(\mathbb{R}^2)), &&
			\\
			&&v^n &\rightharpoonup  v  &&\text{in} \quad L^2(0,T;H^1(\mathbb{R}^2)), &&
			\\
			&&(\partial_tv^n,\partial_t\theta^n) &\rightharpoonup  (\partial_t v,\partial_t \theta) &&\text{in} \quad L^2(0,T;H^{-1}(\mathbb{R}^2) \times H^{-2}(\mathbb{R}^2)). &&
		\end{align*}
		Recall that the injections $H^1 \hookrightarrow L^2 \hookrightarrow H^{-1}$ and $L^2 \hookrightarrow H^{-1} \hookrightarrow H^{-2}$ are locally compact by using the Rellich–Kondrachov and Schauder theorems (see \cite{Brezis_2011}) then an application of the Aubin-Lions lemma (see \cite{Boyer-Fabrie_2013}) implies that as $n \to \infty$
		\begin{align*}
			(v^n,\theta^n) \to (v,\theta) \qquad \text{(locally in space) in} \quad L^2(0,T;L^2(\mathbb{R}^2) \times H^{-1}(\mathbb{R}^2)).
		\end{align*}
		Furthermore, from \eqref{B_app} $(v^n,\theta^n)$ satisfies
		\begin{align*}
			a) \quad &\int^T_0 \int_{\mathbb{R}^2} v^n \cdot \partial_t\phi \,dxdt - \int^T_0 \int_{\mathbb{R}^2} \mathbb{P}(T_n(v^n \cdot \nabla v^n)) \cdot \phi \,dxdt + \int^T_0 \int_{\mathbb{R}^2}\mathbb{P}(\nu_2\partial_{22}v^n_1,\nu_1\partial_{11}v^n_2) \cdot \phi \,dxdt 
			\\
			\quad &+ \lambda_1\int^T_0 \int_{\mathbb{R}^2} \mathbb{P}(\theta^ne_2) \cdot \phi \,dxdt = - \int_{\mathbb{R}^2} v^n(0) \cdot \phi(0) \,dx,
			\\
			b) \quad &\int^T_0 \int_{\mathbb{R}^2} \theta^n \partial_t\varphi \,dxdt - \int^T_0 \int_{\mathbb{R}^2} T_n(v^n \cdot \nabla \theta^n) \varphi \,dxdt - \lambda_2\int^T_0 \int_{\mathbb{R}^2} v^n_2 \varphi \,dxdt  = - \int_{\mathbb{R}^2} \theta^n(0)\varphi(0) \,dx,
		\end{align*}
		where $\phi = (\phi_1,\phi_2) \in C^\infty_c([0,T) \times \mathbb{R}^2;\mathbb{R}^2)$ with $\textnormal{div}\,\phi = 0$ and $\varphi \in C^\infty_c([0,T) \times \mathbb{R}^2;\mathbb{R})$. By using the above weak and strong convergences, as $n \to \infty$, we can pass to the limit for the linear terms easily. It remains to check the convergence of the nonlinear terms. Moreover, we find that\footnote{Here $v \otimes u := (v_iu_j)_{1\leq i,j\leq 2}$ for $v = (v_1,v_2)$ and $u = (u_1,u_2)$.},\footnote{There are more details here compared to the published version.}
		\begin{align*}
			\textnormal{NL}_1 &= \left|\int^T_0 \int_{\mathbb{R}^2} (T_n(v^n \cdot \nabla v^n)- v \cdot \nabla v) \cdot \phi \,dxdt\right| 
			\\
			&\leq \left|\int^T_0 \int_{\mathbb{R}^2} (T_n(v^n \otimes v^n) - v^n \otimes v^n) : \nabla \phi \,dxdt\right|  
			+ \left|\int^T_0 \int_{\mathbb{R}^2} ((v^n-v) \otimes v^n) : \nabla \phi \,dxdt\right| 
			\\
			&\quad + \left|\int^T_0 \int_{\mathbb{R}^2} (v \otimes (v^n-v)) : \nabla \phi \,dxdt\right|
			\\
			&\leq \|T_n(v^n \otimes v^n) - v^n \otimes v^n\|_{L^2(0,T;H^{-1}(\mathbb{R}^2))}  \|\nabla \phi\|_{L^2(0,T;H^1(\mathbb{R}^2))}
			\\
			&\quad +\|v^n-v\|_{L^2_{t,x}(\textnormal{supp}(\phi))} \|v^n\|_{L^\infty(0,T;L^2(\mathbb{R}^2))}  \|\nabla \phi\|_{L^2_tL^\infty_x((0,T) \times \mathbb{R}^2)}
			\\
			&\quad + \|v^n-v\|_{L^2_{t,x}(\textnormal{supp}(\phi))} \|v\|_{L^\infty(0,T;L^2(\mathbb{R}^2))}  \|\nabla \phi\|_{L^2_tL^\infty_x((0,T) \times \mathbb{R}^2)}
		\end{align*}
		$\to 0$ as $n \to \infty$, since the uniformly bounded in terms of $n$ of $\|v^n\|_{L^2_tL^\infty_x}$ (see the appendix)
		\begin{equation*}
			\|T_n(v^n \otimes v^n) - v^n \otimes v^n\|_{L^2(0,T;H^{-1}(\mathbb{R}^2))}  \leq \frac{1}{n} \|v^n \otimes v^n\|_{L^2},
		\end{equation*}
		and 
		\begin{align*}
			\textnormal{NL}_2 &= \left|\int^T_0 \int_{\mathbb{R}^2} (T_n(v^n \cdot \nabla \theta^n) - v \cdot \nabla \theta) \varphi \,dxdt\right| 
			\\
			&\leq \left|\int^T_0 \int_{\mathbb{R}^2} (T_n(v^n \theta^n) - v^n\theta^n) \cdot \nabla \varphi \,dxdt\right|
			+\left|\int^T_0 \int_{\mathbb{R}^2} (v^n-v) \theta^n \cdot \nabla \varphi \,dxdt\right| 
			\\
			&\quad + \left|\int^T_0 \int_{\mathbb{R}^2} v (\theta^n-\theta) \cdot \nabla \varphi \,dxdt\right|
			\\
			&\leq \|T_n(v^n\theta^n) - v^n\theta^n\|_{L^2(0,T;H^{-1}(\mathbb{R}^2))}  \|\nabla \varphi\|_{L^2(0,T;H^1(\mathbb{R}^2))}
			\\ &\quad + \|v^n-v\|_{L^2_{t,x}(\textnormal{supp}(\varphi))} \|\theta^n\|_{L^\infty(0,T;L^2(\mathbb{R}^2))}  \|\nabla \varphi\|_{L^2_tL^\infty_x((0,T) \times \mathbb{R}^2)}
			\\
			&\quad + \|\theta^n-\theta\|_{L^2_tH^{-1}_x(\textnormal{supp}(\varphi))} \|v\|_{L^2(0,T;H^1(\mathbb{R}^2))}  \|\nabla \varphi\|_{L^\infty_tL^\infty_x((0,T) \times \mathbb{R}^2)}
		\end{align*}
		$\to 0$ as $n \to \infty$ since
		\begin{equation*}
			\|T_n(v^n\theta^n) - v^n\theta^n\|_{L^2(0,T;H^{-1}(\mathbb{R}^2))}  \leq \frac{1}{n}\|v^n \theta^n\|_{L^2_{t,x}}.
		\end{equation*}
		That means $(v,\theta)$ satisfies in the sense of distributions (similar to $(a,b)$)
		\begin{equation*}
			\left\{
			\begin{aligned}
				\partial_t v &= - \mathbb{P}(v \cdot \nabla v) +  \mathbb{P}(\nu_2\partial_{22}v_1,\nu_1\partial_{11}v_2) + \lambda_1\mathbb{P}(\theta e_2), 
				\\
				\partial_t \theta &= -v \cdot \nabla \theta  -\lambda_2 v_2,
				\\
				\textnormal{div}\,v &= 0,
			\end{aligned}
			\right.
			\qquad \text{in}\quad \mathbb{R}^2 \times (0,T),
		\end{equation*}
		with $(v,\theta)(0) = (v_0,\theta_0)$. In addition, $(v,\theta) \in C([0,T];L^2(\mathbb{R}^2))$, which shares the same bound for $s \in [0,1]$ as that of $(v^n,\theta^n)$ given in Step 2. A scalar pressure $\pi$ is recovered by using the theorem de Rham (see \cite{Temam_2001}) such that $(v,\theta,\pi)$ satisfies \eqref{B} in the sense of distributions. The continuity of $v$ follows easily while the continuity of $\theta$ follows as a consequence since $\theta$ satisfies a transport-type equation with $v \in C([0,T];L^2) \cap L^2(0,T;H^1)$.
		
		$\bullet$ The continuity in time of $\|v(t)\|_{H^\delta}$ and $\|\theta(t)\|_{H^s}$ for $s \in (0,1]$ and $\delta \in (1,s+1)$. We have $\|v(t)\|_{H^\delta}$ is continuous in time as in Step 4a. If $s \in (0,1]$ then similar to the proofs of Steps 2b and 2c, since $\delta > 1$ it follows that for $0 \leq t_1 \leq t_2 \leq T$
		\begin{equation*}
			\|\theta(t_2)\|_{H^{s}} \leq \left(\|\theta(t_1)\|_{H^{s}} + C(\lambda_2) \int^{t_2}_{t_1} \|v\|_{H^2}\,d\tau\right) \exp\left(C(\delta)\int^{t_2}_{t_1} \|v\|_{H^{\delta+1}}\,d\tau\right),
		\end{equation*}
		which implies that $\|\theta(t)\|_{H^s}$ is right-continuous in time. As in Step 4a, $\|\theta(t)\|_{H^s}$ is left-continuous in time as well.
		
		\textbf{Step 5: Uniqueness.} In fact, it suffices to focus on the case $s = 0$. However, to avoid borrowing known ideas, we still provide our simple proof for the case $s > 1$.
		
		\textbf{5a) The case $s > 1$.} We assume that $(v,\theta_1)$ and $(u,\theta_2)$ are two solutions to \eqref{B} with the same initial data. Therefore, 
		\begin{equation*}
			\frac{1}{2}\frac{d}{dt} \left(\|v-u\|^2_{L^2} + \|\theta_1-\theta_2\|^2_{L^2}\right) + \min\{\nu_2,\mu_1\} \|\nabla (v-u)\|^2_{L^2}= \sum^{47}_{i=45} I_i,
		\end{equation*}
		where for $\theta := \theta_1 - \theta_2$
		\begin{align*}
			I_{45} &:= -\int_{\mathbb{R}^2} (v-u) \cdot \nabla v \cdot (v-u)\,dx, 
			\\
			I_{46} &:= -\int_{\mathbb{R}^2}(v-u) \cdot \nabla \theta_1 \theta \,dx,
			\\
			I_{47} &:= (\lambda_1-\lambda_2)\int_{\mathbb{R}^2} \theta (v_2-u_2)\,dx.
		\end{align*}
		Since $\delta > 1$ then $I_{45}$, $I_{46}$ and $I_{47}$ can be estimated in the same way as $I_{422}$, $I_{432}$ and $I_{44}$, respectively. Thus, the uniqueness follows as in Step 3.
		
		\textbf{5b) The case $s \in [0,1]$.} It is enough to focus on the case $s = 0$ and other cases follow as a consequence. We will use the ideas in \cite[$\mathbb{R}^2$]{Boardman-Ji-Qiu-Wu_2019}, \cite[$\mathbb{T}^2$]{He-Ma-Sun_2022,Larios_Lunasin_Titi_2013} and \cite[bounded domains]{He_2012,Hu-Wang-Wu-Xiao-Yuan_2018}, where all these papers considered only the case $\lambda_2 = 0$, the partial dissipation $(\partial_{22}v_1,\partial_{11}v_2)$ is replaced by $\Delta v$ in \cite{Boardman-Ji-Qiu-Wu_2019} or by $\partial_{11}v$ in \cite{Larios_Lunasin_Titi_2013} and the proof used the Poincar\'{e} inequality on $\mathbb{T}^2$ in \cite{He-Ma-Sun_2022}. It seems to us that their proof can not be applied directly in the case of $\mathbb{R}^2$ with $\lambda_2 \neq 0$. Thus, it is needed to modify their proofs and we give a proof here for the sake of completeness. Let $\eta \in C^\infty_c(\mathbb{R}^2)$\footnote{The set of smooth functions in $\mathbb{R}^2$ with compact support.} be fixed and time-independent. We define $\rho_i$ for $i \in \{ 1,2\}$ as follows\footnote{There is a minus sign here compared to the published version. Therefore, there are several places, which are needed to be modified the sign below. More precisely, there are two places: the equation satisfied by $\rho \eta$ and $I_{50}$. However, the proof can be done in the same way.}
		%here for a.e t in (0,T)
		%x should be in the support of \eta in the sense that \Delta (\rho\eta)(x,t) = \theta in \supp(\eta)
		%and \rho\eta(x,t) = 0 on the boundary of the support of \eta or out side of the support
		\begin{equation*}
			-\Delta (\rho_i\eta)(x,t) = \theta_i(x,t)  \qquad \text{for} \quad (x,t) \in \text{supp}(\eta) \times (0,T) \quad\text{and} \quad  \rho\eta_i = 0 \quad \text{otherwise},
		\end{equation*}
		where it can be seen that for each $i$, $\rho_i\eta$ is uniquely determined by $\theta_i$  and satisfies $\|\nabla(\rho_i\eta)\|_{L^2} \leq \|\theta_i\|_{L^2}$.
		Thus, for $\rho := \rho_1 - \rho_2$ (see \cite{Larios_Lunasin_Titi_2013})
		\begin{equation*}
			-\partial_t \Delta(\rho\eta) =  (v-u) \cdot \nabla \Delta(\rho_1\eta) + u \cdot \nabla \Delta(\rho\eta) - \lambda_2(v_2-u_2)
		\end{equation*}
		and
		\begin{equation*}
			\frac{1}{2}\frac{d}{dt}\|\nabla(\rho\eta)\|^2_{L^2}= \sum^{50}_{i=48} I_i, 
		\end{equation*}
		where for $p \in [2,\infty)$, $q \in (2,4]$ with $\frac{1}{p} + \frac{2}{q} = 1$ and for some $\epsilon \in (0,1)$
		\begin{align*}
			I_{48} &:=  \int_{\mathbb{R}^2} (v-u) \cdot \nabla \Delta(\rho_1\eta) \rho\eta\,dx
			\\
			&\leq \|\theta_1\|_{L^2}\|v-u\|_{L^{2p}}\|\nabla(\rho\eta)\|_{L^q} 
			\\
			&\leq C \sqrt{p} \|\theta_1\|_{L^2} \|v-u\|^\frac{1}{p}_{L^2} \|\nabla(v-u)\|^{1-\frac{1}{p}}_{L^2} \|\nabla(\rho\eta)\|^\frac{2}{q}_{L^2} \|\theta\|^{1-\frac{2}{q}}_{L^2}
			\\
			&\leq C \sqrt{p} \|\theta_1\|_{L^2} \left(\|v-u\|_{L^2} + \|\nabla(v-u)\|_{L^2}\right) \|\nabla(\rho\eta)\|^{1-\frac{1}{p}}_{L^2} \|\theta\|^\frac{1}{p}_{L^2}
			\\
			&\leq \epsilon\min\{\nu_2,\mu_1\} \left(\|v-u\|^2_{L^2} + \|\nabla(v-u)\|^2_{L^2}\right) + C(\nu_2,\mu_1,\epsilon) p \|\theta_1\|^2_{L^2} \|\nabla(\rho\eta)\|^{2-\frac{2}{p}}_{L^2} \|\theta\|^\frac{2}{p}_{L^2};
			\\
			I_{49} &:=  \int_{\mathbb{R}^2} u \cdot \nabla \Delta(\rho\eta) \rho\eta\,dx
			\leq \|\nabla u\|_{L^p} \|\nabla(\rho\eta)\|^2_{L^{q}}
			\leq C \|\nabla u\|_{L^p} \|\nabla(\rho\eta)\|^{2-\frac{2}{p}}_{L^2} \|\theta\|^\frac{2}{p}_{L^2};
			\\
			I_{50} &:= -\lambda_2\int_{\mathbb{R}^2} (v_2-u_2)\rho\eta\,dx \leq |\lambda_2| \|v-u\|_{L^2}\|\rho\eta\|_{L^2} \leq C(\lambda_2,\eta) \left(\|v-u\|^2_{L^2} + \|\nabla (\rho\eta)\|^2_{L^2}\right). 
		\end{align*}
		Here, we used the following inequality (see \cite{Kozono-Wadade_2008}) 
		\begin{equation*}
			\|f\|_{L^{p_0}} \leq C \sqrt{p_0} \|f\|^\frac{2}{p_0}_{L^2} \|f\|^{1-\frac{2}{p_0}}_{\dot{H}^1}  \qquad \text{for} \quad p_0 \in [2,\infty).
		\end{equation*}
		In addition,
		\begin{equation*}
			\frac{1}{2}\frac{d}{dt} \|v-u\|^2_{L^2}  + \min\{\nu_2,\mu_1\} \|\nabla (v-u)\|^2_{L^2} = I_{51} + I_{52},
		\end{equation*}
		where
		\begin{align*}
			I_{51} &:= -\int_{\mathbb{R}^2} (v-u) \cdot \nabla v \cdot (v-u)\,dx \leq C(\nu_2,\mu_1,\epsilon) \|v-u\|^2_{L^2}\|\nabla v\|^2_{L^2} + \epsilon\min\{\nu_2,\mu_1\} \|\nabla(v-u)\|^2_{L^2};
			\\
			I_{52} &:= \lambda_1\int_{\mathbb{R}^2} \Delta(\rho\eta) (v_2-u_2) \,dx \leq C(\lambda_1,\nu_2,\mu_1,\epsilon) \|\nabla(\rho\eta)\|^2_{L^2} + \epsilon\min\{\nu_2,\mu_1\} \|\nabla(v-u)\|^2_{L^2}.
		\end{align*}
		Therefore, by choosing $\epsilon = \frac{1}{6}$ 
		\begin{align*}
			\frac{d}{dt} Y_\delta(t) \leq C_1 \left(1 + \|\nabla v\|^2_{L^2}\right) Y_\delta(t) + C_2p M^\frac{1}{p} Y^{1-\frac{1}{p}}_\delta(t) \sup_{p \in [2,\infty)} \left(1 + \frac{\|\nabla u\|_{L^p}}{p}\right),
		\end{align*}
		where $C_1 = C(\nu_2,\mu_1,\lambda_i,\eta)$, $C_2 = C(T,\nu_2,\mu_1,v_0,\lambda_i,\theta)$ and for $\delta \in (0,1)$ 
		\begin{equation*}
			Y_\delta(t) := \|(v-u)(t)\|^2_{L^2} + \|\nabla(\rho\eta)(t)\|^2_{L^2} + \delta \qquad \forall t \in (0,T),
		\end{equation*}
		and the constant $M = C(T,\lambda_1,\lambda_2,\nu_2,\mu_1,\eta,v_0,\theta_0)$ can be chosen such that 
		\begin{equation*}
			e^2Y_\delta(t) \leq e^2\esssup_{t \in (0,T)} \left(\|(v-u)(t)\|^2_{L^2} + \|\theta(t)\|^2_{L^2} + 1\right)  \leq M.
		\end{equation*}
		Define $f(p) := C_2p M^\frac{1}{p} Y^{1-\frac{1}{p}}_\delta(t)$  for $p \in [2,\infty)$, we find that for $p_0 \in [2,\infty)$
		\begin{align*}
			f'(p_0) &= C_2M^\frac{1}{p_0} Y^{1-\frac{1}{p_0}}_\delta(t)\left(1 - \frac{1}{p_0} \log(M) + \frac{1}{p_0} \log(Y_\delta(t))\right) = 0 
			\\
			&\Leftrightarrow\quad  p_0 = \log(M) - \log(Y_\delta(t)) \geq 2,
		\end{align*}
		and choose $p = p_0$ with $f(p) = f(p_0) = C_2eY_\delta(t)(\log(M)-\log(Y_\delta(t)))$. Thus, integrating in time yields
		\begin{equation*}
			Y_\delta(t) \leq Y_\delta(0) + \int^t_0 g(\tau) h(Y_\delta(\tau))\,d\tau \qquad \text{for} \quad t \in (0,T),
		\end{equation*}
		where
		\begin{align*}
			g(t) &:= C_1 \left(1 + \|\nabla v(t)\|^2_{L^2}\right) + eC_2 \sup_{p \in [2,\infty)} \left(1 + \frac{\|\nabla u(t)\|_{L^p}}{p}\right),
			\\
			h(r) &:= r + r(\log(M) - \log(r)) \qquad \forall r \in  (0,a:= e^{-2}M).
		\end{align*}
		In addition, $h$ is a continuous and nondecreasing function with
		\begin{equation*}
			\gamma(x) := \int^a_x \frac{dr}{h(r)} = \log(a + \log(M) - \log(x)) - \log(a + \log(M)) \qquad \forall x \in (0,a),
		\end{equation*}
		which by using the Osgood lemma (see \cite{Bahouri-Chemin-Danchin_2011}) implies that
		\begin{equation*}
			 \log(a + \log(M) - \log(Y_\delta(0))) - \log(a + \log(M) - \log(Y_\delta(t)))  = \gamma(Y_\delta(0))-\gamma(Y_\delta(t))  \leq \int^t_0 g(\tau) \,d\tau =: \tilde{g}(t).
		\end{equation*} 
		Then, it follows that 
		\begin{equation*}
			Y_\delta(t) \leq Y_\delta(0)^{\exp(-\tilde{g}(t))} (\exp(a)M)^{1-\tilde{g}(t)},
		\end{equation*}
		which by letting $\delta \to 0$ and using $Y_0(0) = 0$ yields 
		\begin{equation*}
			\|(v-u)(t)\|^2_{L^2} + \|\nabla(\rho\eta)(t)\|^2_{L^2} = 0 \qquad \text{for} \quad t \in (0,T).
		\end{equation*}
		Therefore, it leads to $v = u$ and $\theta_1 = \theta_2$ as well since $\eta$ is a arbitrary function in $C^\infty_c(\mathbb{R}^2)$. It remains to check that $\tilde{g}(t) < \infty$ for $t \in (0,T)$. It is enough to check 
		\begin{equation} \label{Lp}
			\sup_{p \in [2,\infty)} \frac{1}{p} \int^T_0 \|\nabla u\|_{L^p} \,dt \leq C(T,\nu_2,\mu_1,\lambda_i,v_0,\theta_0),
		\end{equation}
		where its proof is long and can be found in Appendix B (see Section \ref{sec:app}). Thus, the proof is complete.
	\end{proof}
	
	%
	%------------------------------------------------
	\section{Proof of Theorem \ref{theo_nu2_mu1_de2}} \label{sec:s_ltb}
	%------------------------------------------------
	%
	
	In this section, we will give a proof of Theorem \ref{theo_nu2_mu1_de2}. As it can be seen later that we need estimates which are independent of time and of regularization parameter as well then we will use another strategy compare to the proof of Theorem \ref{theo_nu2_mu1_de}. 
	
	\begin{proof}[Proof of Theorem \ref{theo_nu2_mu1_de2}: The case $\delta_1 = 0$ and $\delta_2 > 0$.]  The proof contains several steps as follows.
		
		\textbf{Step 1: Local existence of approximate solutions.} Given an $\epsilon > 0$, we consider the following approximate system of \eqref{B} in $\mathbb{R}^2 \times (0,\infty)$
		\begin{equation} \label{B1_nu2_mu1_de2_app} 
			\left\{
			\begin{aligned}
				\partial_t v^\epsilon_1 + J^\epsilon(J^\epsilon v^\epsilon \cdot \nabla J^\epsilon v^\epsilon_1) + \partial_1\pi^\epsilon &=  \nu_2J^\epsilon(\partial_{22} J^\epsilon v^\epsilon_1),
				\\
				\partial_t v^\epsilon_2 + J^\epsilon(J^\epsilon v^\epsilon \cdot \nabla J^\epsilon v^\epsilon_2) + \partial_2\pi^\epsilon &=  \mu_1J^\epsilon(\partial_{11}J^\epsilon v^\epsilon_2) +  \lambda\theta^\epsilon,
				\\
				\partial_t \theta^\epsilon + J^\epsilon(J^\epsilon v^\epsilon \cdot \nabla J^\epsilon \theta^\epsilon) + \lambda v^\epsilon_2 &= \delta_2 J^\epsilon(\partial_{22}J^\epsilon \theta^\epsilon),
				\\
				\textnormal{div}\,v^\epsilon &= 0,
				\\
				(v^\epsilon,\theta^\epsilon)(0) &= (v_0,\theta_0),
			\end{aligned}
			\right.
		\end{equation}
		where $J^\epsilon f$ is the mollification\footnote{For $\epsilon > 0$, $x \in \mathbb{R}^2$ and $f \in L^p(\mathbb{R}^2)$ with $p \in [1,\infty]$, we define
		\begin{equation*}
			J^\epsilon f(x) := \frac{1}{\epsilon^2} \int_{\mathbb{R}^2} \rho\left(\frac{x-y}{\epsilon}\right) f(y)\,dy \quad \text{where} \quad \rho \in C^\infty_0(\mathbb{R}^2), \rho \geq 0, \int_{\mathbb{R}^2}\rho\,dx = 1.
		\end{equation*}} 
		of $f$. We define 
		\begin{equation*}
			V^m(\mathbb{R}^2) := \left\{f \in H^m(\mathbb{R}^2): \textnormal{div}\,f = 0\right\} \qquad \forall m \in \mathbb{N}_0.
		\end{equation*}
		Projecting the first two equations in \eqref{B1_nu2_mu1_de2_app} onto $V^m(\mathbb{R}^2)$ and combining with the other equation, we obtain the following ODE in $V^m(\mathbb{R}^2) \times H^m(\mathbb{R}^2)$
		\begin{equation} \label{B1_nu2_mu1_de2_ODE}
			\frac{d}{dt}(v^\epsilon,\theta^\epsilon) = (F_1^\epsilon,F_2^\epsilon)(v^\epsilon,\theta^\epsilon),
			\quad \textnormal{div}\, v^\epsilon = 0
			\quad \text{and} \quad u^\epsilon_0 =  (v^\epsilon_0,\theta^\epsilon_0) = (v_0,\theta_0),
		\end{equation}
		where $F^\epsilon := (F_1^\epsilon,F_2^\epsilon) : V^m(\mathbb{R}^2) \times H^m(\mathbb{R}^2) \to V^m(\mathbb{R}^2) \times H^m(\mathbb{R}^2)$ and 
		\begin{align*}
			F^\epsilon_1 &:= -\mathbb{P}(J^\epsilon(J^\epsilon v^\epsilon \cdot \nabla J^\epsilon v^\epsilon)) 
			+  \mathbb{P}((\nu_2J^\epsilon(\partial_{22}J^\epsilon v^\epsilon_1),\mu_1J^\epsilon(\partial_{11}J^\epsilon v^\epsilon_2))) + \lambda \mathbb{P}(\theta^\epsilon e_2),
			\\
			F^\epsilon_2 &:= -J^\epsilon(J^\epsilon v^\epsilon \cdot \nabla J^\epsilon \theta^\epsilon) - \lambda v^\epsilon_2 + \delta_2 J^\epsilon(\partial_{22}J^\epsilon \theta^\epsilon).
		\end{align*}
		By using the usual properties of $J^\epsilon$ and $\mathbb{P}$ (see \cite[Lemmas 3.4 and 3.6]{Majda_Bertozzi_2002}), it can be seen that $F^\epsilon$ is well-defined and is also locally Lipschitz continuous under the norm in $V^m(\mathbb{R}^2) \times H^m(\mathbb{R}^2)$ given by 
		\begin{equation*}
			\|(v^\epsilon,\theta^\epsilon)\|_m := \|v^\epsilon\|_{H^m} + \|\theta^\epsilon\|_{H^m}.
		\end{equation*}
		Thus, the Picard theorem gives us a unique solution $(v^\epsilon,\theta^\epsilon) \in C^1\left([0,T_\epsilon),V^m(\mathbb{R}^2) \times H^m(\mathbb{R}^2)\right)$ for some $T_\epsilon > 0$. In addition, if $T_\epsilon < \infty$ then 
		\begin{equation*}
			\lim_{t\to T_\epsilon} \|v^\epsilon(t)\|_{H^m} + \|\theta^\epsilon(t)\|_{H^m} = \infty.
		\end{equation*}
		
		\textbf{Step 2: $H^1$ estimate.} Assume that $(v_0,\theta_0) \in H^1(\mathbb{R}^2)$ then from the previous step there exists a unique solution $(v^\epsilon,\theta^\epsilon)$ to \eqref{B1_nu2_mu1_de2_ODE}  in $C^1\left([0,T_\epsilon),V^1(\mathbb{R}^2) \times H^1(\mathbb{R}^2)\right)$ for some $T_\epsilon > 0$. Taking $L^2$-inner product of \eqref{B1_nu2_mu1_de2_ODE} with $(v^\epsilon,\theta^\epsilon)$, using the properties of $J^\epsilon$ and $\mathbb{P}$, integration by parts, the divergence-free conditions of $v^\epsilon$  and $J^\epsilon v^\epsilon$ and summing up yield (in the sequel, for simplicity we write $(v,\theta)$ and $(J v,J\theta)$ instead of $(v^\epsilon,\theta^\epsilon)$ and $(J^\epsilon v^\epsilon,J^\epsilon \theta^\epsilon)$, respectively)
		\begin{equation} \label{L2_nu2_mu1_de2}
			\frac{1}{2} \frac{d}{dt} \left(\|v\|^2_{L^2} + \|\theta\|^2_{L^2}\right) + \nu_2\|\partial_2 J v_1\|^2_{L^2} + \mu_1\|\partial_1 J v_2\|^2_{L^2} + \delta_2\|\partial_2 J \theta\|^2_{L^2} = 0,
		\end{equation}
		where (and in the sequel) we also used (without saying) the following identity (also for higher derivatives)
		\begin{equation*}
			\lambda\int_{\mathbb{R}^2} \mathbb{P}(\theta e_2) \cdot v - v_2 \theta\,dx =  0.
		\end{equation*}
		Considering the equation for $(\nabla v^\epsilon,\nabla \theta^\epsilon)$ in \eqref{B1_nu2_mu1_de2_ODE} and taking $L^2$-inner product of the resulting system with $(\nabla v^\epsilon,\nabla \theta^\epsilon)$, similar to \eqref{L2_nu2_mu1_de2} we obtain
		\begin{equation} \label{H1_nu2_mu1_de2_1}
			\frac{1}{2} \frac{d}{dt} \left(\|\nabla v\|^2_{L^2} + \|\nabla \theta\|^2_{L^2}\right) + \nu_2\|\nabla \partial_2 J v_1\|^2_{L^2} + \mu_1\| \nabla \partial_1 J v_2\|^2_{L^2} + \delta_2\|\nabla \partial_2 J \theta\|^2_{L^2} = \sum^{4}_{i=1} I_i,
		\end{equation}
		where we used the divergence-free condition of $Jv$ to have 
		\begin{equation*}
			\int_{\mathbb{R}^2} \nabla(J v\cdot\nabla J v):\nabla Jv\,dx = 0\quad \text{and} \quad \int_{\mathbb{R}^2} \nabla(J v\cdot\nabla J\theta) \cdot \nabla J \theta\,dx = \int_{\mathbb{R}^2} \nabla J v \cdot\nabla J \theta \cdot \nabla J \theta\,dx
		\end{equation*}
		with
		\begin{align*}
			I_1 &:= -\int_{\mathbb{R}^2} \partial_1 J v_1 (\partial_1 J \theta)^2 \,dx, & I_2 &:= -\int_{\mathbb{R}^2} \partial_2 J v_2 (\partial_2 J\theta)^2 \,dx,
			\\
			I_3 &:= -\int_{\mathbb{R}^2} \partial_1 J v_2 \partial_1 J \theta \partial_2 J \theta \,dx, & I_4 &:= -\int_{\mathbb{R}^2} \partial_2 J v_1 \partial_1 J \theta \partial_2 J\theta \,dx.
		\end{align*}
		By using the following inequality (see \cite[Lemma 1]{Cao_Wu_2011}) that if $f,g,h \in L^2(\mathbb{R}^2)$ with $\partial_1 g, \partial_2 h \in L^2(\mathbb{R}^2)$ then 
		\begin{equation*} 
			\int_{\mathbb{R}^2} fgh \,dx \leq C \|f\|_{L^2} \|g\|^{\frac{1}{2}}_{L^2} \|\partial_1 g\|^{\frac{1}{2}}_{L^2} \|h\|^{\frac{1}{2}}_{L^2} \|\partial_2 h\|^{\frac{1}{2}}_{L^2},
		\end{equation*}
		which is applied to bound the above integrals as follows
		\begin{align*}
			I_1 &= -2\int_{\mathbb{R}^2} J v_2 \partial_1 J\theta \partial_{21} J\theta \,dx 
			\leq C\|\partial_{21} J\theta\|_{L^2} \|\partial_1 J\theta\|^{\frac{1}{2}}_{L^2} \|\partial_{21} J\theta\|^{\frac{1}{2}}_{L^2} \|Jv_2\|^\frac{1}{2}_{L^2} \|\partial_1 Jv_2\|^{\frac{1}{2}}_{L^2}
			\\
			&\leq C\left(\|\partial_1 \theta\|_{L^2} + \|v_2\|_{L^2}\right) \left(\|\partial_2 J\theta\|^2_{H^1} + \|\partial_1 Jv_2\|^2_{L^2}\right);
			\\
			I_2 &= -2\int_{\mathbb{R}^2} J v_1 \partial_{12} J \theta \partial_2 J\theta \,dx \leq C\|\partial_{12} \theta\|_{L^2} \|\partial_2 J\theta\|^{\frac{1}{2}}_{L^2} \|\partial_{12} J\theta\|^{\frac{1}{2}}_{L^2} \|Jv_1\|^\frac{1}{2}_{L^2} \|\partial_2 Jv_1\|^{\frac{1}{2}}_{L^2}
			\\
			&\leq C\left(\|\partial_2 \theta\|_{L^2} + \|v_1\|_{L^2}\right) \left(\|\partial_2 J\theta\|^2_{H^1} + \|\partial_2 Jv_1\|^2_{L^2}\right);
			\\
			I_3 &\leq C\|\partial_1 J\theta\|_{L^2} \|\partial_2 J\theta\|^{\frac{1}{2}}_{L^2} \|\partial_{22} J\theta\|^{\frac{1}{2}}_{L^2} \|\partial_1 Jv_2\|^\frac{1}{2}_{L^2} \|\partial_{11} Jv_2\|^{\frac{1}{2}}_{L^2} 
			\\
			&\leq C\|\partial_1 \theta\|_{L^2}\left(\|\partial_2 J\theta\|^2_{H^1} + \|\partial_1 Jv_2\|^2_{L^2}\right);
			\\
			I_4 &\leq C\|\partial_1 J\theta\|_{L^2} \|\partial_2 J\theta\|^{\frac{1}{2}}_{L^2} \|\partial_{22} J\theta\|^{\frac{1}{2}}_{L^2} \|\partial_2 Jv_1\|^\frac{1}{2}_{L^2} \|\partial_{12} Jv_2\|^{\frac{1}{2}}_{L^2} 
			\\
			&\leq C\|\partial_1 \theta\|_{L^2}\left(\|\partial_2 J\theta\|^2_{H^1} + \|\partial_2 Jv_1\|^2_{L^2}\right);
		\end{align*}
		which combine with \eqref{L2_nu2_mu1_de2}-\eqref{H1_nu2_mu1_de2_1} implies that for a positive constant $C$ which does not depend on $\epsilon$
		\begin{multline} \label{H1_nu2_mu1_de2_2}
			\frac{1}{2} \frac{d}{dt} \left(\|v\|^2_{H^1} + \|\theta\|^2_{H^1}\right) + \nu_2\|\partial_2 Jv_1\|^2_{H^1} + \mu_1\|\partial_1 Jv_2\|^2_{H^1} + \delta_2\|\partial_2 J\theta\|^2_{H^1} \\\leq C \left(\|\theta\|_{H^1} + \|v\|_{H^1}\right) \left(\|\partial_2 Jv_1\|^2_{H^1} + \|\partial_1 Jv_2\|^2_{H^1} + \|\partial_2 J\theta\|^2_{H^1}\right).
		\end{multline}
		
		\textbf{Step 3: $H^2$ estimate.} Assume that $(v_0,\theta_0) \in H^2(\mathbb{R}^2)$ then there exists a unique solution $(v^\epsilon,\theta^\epsilon)$ to \eqref{B1_nu2_mu1_de2_ODE} in $C^1\left([0,T_\epsilon),V^2(\mathbb{R}^2) \times H^2(\mathbb{R}^2)\right)$ for some $T_\epsilon > 0$. Similar to the previous step, we obtain 
		\begin{equation} \label{H2_nu2_mu1_de2_1}
			\frac{1}{2} \frac{d}{dt} \left(\|\Delta v\|^2_{L^2} + \|\Delta \theta\|^2_{L^2}\right) + \nu_2\|\Delta \partial_2 Jv_1\|^2_{L^2} + \mu_1\|\Delta\partial_1 Jv_2\|^2_{L^2} + \delta_2\|\Delta \partial_2 J\theta\|^2_{L^2} = \sum_{i=1}^2 J_i,
		\end{equation}
		where by using the divergence-free condition, H\"{o}lder and 2D Ladyzhensaya inequalities
		\begin{align*}
			J_1 &:=  -\int_{\mathbb{R}^2} \Delta Jv \cdot \nabla Jv \cdot \Delta Jv \,dx - 2\int_{\mathbb{R}^2} \nabla Jv:\nabla\nabla Jv_1 \Delta Jv_1 + \nabla Jv:\nabla\nabla Jv_2 \Delta Jv_2\,dx
			\\
			&\leq C \|\nabla v\|_{L^2} \|\Delta Jv\|^2_{L^4} + C\left(\|\nabla\nabla v_1\|_{L^2} + \|\nabla\nabla v_2\|_{L^2}\right) \|\nabla Jv\|_{L^4} \|\Delta Jv\|_{L^4} 
			\\
			&\leq C \|v\|_{H^2}\left(\|\partial_2 Jv_1\|^2_{H^2} + \|\partial_1 Jv_2\|^2_{H^2}\right).
		\end{align*}
		We continue with the second term on the right hand side of \eqref{H2_nu2_mu1_de2_1}
		\begin{align*}
			J_2 &:= -\int_{\mathbb{R}^2} \Delta Jv \cdot \nabla J\theta \Delta J\theta  + 2 \nabla Jv :  \nabla \nabla J\theta \Delta J\theta \,dx
			\\
			&= -\int_{\mathbb{R}^2} \Delta Jv_1 \partial_1J\theta \Delta J\theta + \Delta Jv_2 \partial_2J\theta \Delta J\theta + 2 \partial_1Jv \cdot \partial_1 \nabla J\theta \Delta J\theta 
			+ 2 \partial_2Jv \cdot \partial_2 \nabla J\theta \Delta J\theta \,dx 
			\\
			&=: \sum^4_{i=1}J_{2i}.
		\end{align*}
		The first term can be rewritten as
		\begin{align*}
			J_{21} &= -\int_{\mathbb{R}^2} \partial_{11}Jv_1 \partial_1J\theta \partial_{11}J\theta + \partial_{11}Jv_1 \partial_1J\theta \partial_{22}J\theta + \partial_{22}Jv_1 \partial_1J\theta \partial_{11}J\theta + \partial_{22}Jv_1 \partial_1J\theta \partial_{22}J\theta \,dx
			\\
			&=: \sum^4_{i=1} J_{21i},
		\end{align*}
		and by using the divergence-free condition, integration by parts and Young inequality, each term can be estimated as follows
		\begin{align*}
			J_{211} &\leq C\|\partial_{12}Jv_2\|_{L^2} \|\partial_1J\theta\|^{\frac{1}{2}}_{L^2} \|\partial_{21}J\theta\|^{\frac{1}{2}}_{L^2} \|\partial_{11}J\theta\|^{\frac{1}{2}}_{L^2} \|\partial_{211}J\theta\|^{\frac{1}{2}}_{L^2} 
			\\
			&\leq C\|\theta\|_{H^2}\left(\|\partial_1 Jv_2\|^2_{H^2} + \|\partial_2J\theta\|^2_{H^2}\right);
			\\
			J_{212} &\leq C\|\partial_{12}Jv_2\|_{L^2} \|\partial_1J\theta\|^{\frac{1}{2}}_{L^2} \|\partial_{21}J\theta\|^{\frac{1}{2}}_{L^2} \|\partial_{22}J\theta\|^{\frac{1}{2}}_{L^2} \|\partial_{122}J\theta\|^{\frac{1}{2}}_{L^2} 
			\\
			&\leq C\|\theta\|_{H^2}\left(\|\partial_1 Jv_2\|^2_{H^2} + \|\partial_2J\theta\|^2_{H^2}\right);
			\\
			J_{213} &= \int_{\mathbb{R}^2} \partial_2 Jv_1 \partial_{21}J\theta \partial_{11}J\theta + \partial_2 Jv_1 \partial_1J\theta \partial_{211}J\theta \,dx =: J_{2131} + J_{2132},
			\\
			J_{2131} &\leq C\|\partial_{21}J\theta\|_{L^2}\|\partial_2Jv_1\|^\frac{1}{2}_{L^2} \|\partial_{12}Jv_1\|^\frac{1}{2}_{L^2} \|\partial_{11}J\theta\|^{\frac{1}{2}}_{L^2} \|\partial_{211}J\theta\|^{\frac{1}{2}}_{L^2} 
			\\
			&\leq C \|\theta\|_{H^2}\left(\|\partial_2 Jv_1\|^2_{H^2} + \|\partial_2J\theta\|^2_{H^2}\right),
			\\
			J_{2132} &\leq C\|\partial_{211}J\theta\|_{L^2}\|\partial_2Jv_1\|^\frac{1}{2}_{L^2} \|\partial_{12}Jv_1\|^\frac{1}{2}_{L^2} \|\partial_1J\theta\|^{\frac{1}{2}}_{L^2} \|\partial_{21}J\theta\|^{\frac{1}{2}}_{L^2} 
			\\
			&\leq C \|\theta\|_{H^2}\left(\|\partial_2 Jv_1\|^2_{H^2} + \|\partial_2J\theta\|^2_{H^2}\right);
			\\
			J_{214} &\leq C\|\partial_{22}J\theta\|_{L^2}\|\partial_1J\theta\|^{\frac{1}{2}}_{L^2} \|\partial_{21}J\theta\|^{\frac{1}{2}}_{L^2} \|\partial_{22}Jv_1\|^\frac{1}{2}_{L^2} \|\partial_{122}Jv_1\|^\frac{1}{2}_{L^2} 
			\\
			&\leq C \|\theta\|_{H^2}\left(\|\partial_2 Jv_1\|^2_{H^2} + \|\partial_2J\theta\|^2_{H^2}\right).
		\end{align*}
		The second term is bounded by  
		\begin{align*}
			J_{22} &\leq C \|\partial_2J\theta\|_{L^2} \|\Delta Jv_2\|^{\frac{1}{2}}_{L^2} \|\partial_1 \Delta Jv_2\|^{\frac{1}{2}}_{L^2} \|\Delta J\theta\|^\frac{1}{2}_{L^2} \|\partial_2 \Delta J\theta\|^\frac{1}{2}_{L^2} 
			\\
			&\leq C \left(\|v\|_{H^2} + \|\theta\|_{H^2}\right)\left(\|\partial_1 Jv_2\|^2_{H^2} + \|\partial_2J\theta\|^2_{H^2}\right).
		\end{align*}
		The third term can be expressed by
		\begin{equation*}
			J_{23} = -2\int_{\mathbb{R}^2} \partial_1Jv_1 \partial_{11}J\theta \partial_{11}J\theta + \partial_1Jv_1 \partial_{11}J\theta \partial_{22}J\theta + \partial_1Jv_2 \partial_{12}J\theta \Delta J\theta \,dx =: \sum^3_{i=1} J_{23i}
		\end{equation*}
		and similar to the first term, each integral can be bounded by
		\begin{align*}
			J_{231} &= 2\int_{\mathbb{R}^2} \partial_2Jv_2 \partial_{11}J\theta \partial_{11}J\theta \,dx = -4 \int_{\mathbb{R}^2} Jv_2 \partial_{211}J\theta \partial_{11}J\theta \,dx
			\\
			&\leq  C\|\partial_{211}J\theta\|_{L^2}\|\partial_{11}J\theta\|^{\frac{1}{2}}_{L^2} \|\partial_{211}J\theta\|^{\frac{1}{2}}_{L^2} \|Jv_2\|^\frac{1}{2}_{L^2} \|\partial_1Jv_2\|^\frac{1}{2}_{L^2}  
			\\
			&\leq C \left(\|v\|_{H^2} + \|\theta\|_{H^2}\right)\left(\|\partial_1 Jv_2\|^2_{H^2} + \|\partial_2J\theta\|^2_{H^2}\right);
			\\
			J_{232} &= 2\int_{\mathbb{R}^2} \partial_2Jv_2 \partial_{11}J\theta \partial_{22}J\theta \,dx 
			\\
			&\leq  C\|\partial_{22}J\theta\|_{L^2}\|\partial_{11}J\theta\|^{\frac{1}{2}}_{L^2} \|\partial_{211}J\theta\|^{\frac{1}{2}}_{L^2} \|\partial_2 Jv_2\|^\frac{1}{2}_{L^2} \|\partial_{12}Jv_2\|^\frac{1}{2}_{L^2}  
			\\
			&\leq C \left(\|v\|_{H^2} + \|\theta\|_{H^2}\right)\left(\|\partial_1 Jv_2\|^2_{H^2} + \|\partial_2J\theta\|^2_{H^2}\right);
			\\
			J_{233} &\leq C \|\Delta J\theta\|_{L^2}\|\partial_{12}J\theta\|^{\frac{1}{2}}_{L^2} \|\partial_{212}J\theta\|^{\frac{1}{2}}_{L^2} \|\partial_1Jv_2\|^\frac{1}{2}_{L^2} \|\partial_{11}Jv_2\|^\frac{1}{2}_{L^2}  
			\\
			&\leq C \left(\|v\|_{H^2} + \|\theta\|_{H^2}\right)\left(\|\partial_1 Jv_2\|^2_{H^2} + \|\partial_2J\theta\|^2_{H^2}\right).
		\end{align*}
		The last term can be estimated by
		\begin{align*}
			J_{24} &\leq C \|\partial_2\nabla J\theta\|_{L^2}\|\Delta J\theta\|^{\frac{1}{2}}_{L^2} \|\partial_2 \Delta J\theta\|^{\frac{1}{2}}_{L^2} \|\partial_2Jv\|^\frac{1}{2}_{L^2} \|\partial_{12}Jv\|^\frac{1}{2}_{L^2}  
			\\
			&\leq C \left(\|v\|_{H^2} + \|\theta\|_{H^2}\right)\left(\|\partial_2 Jv_1\|^2_{H^2} + \|\partial_1 Jv_2\|^2_{H^2} + \|\partial_2 J\theta\|^2_{H^2}\right).
		\end{align*}
		Collecting all above estimates with using \eqref{L2_nu2_mu1_de2} and \eqref{H2_nu2_mu1_de2_1} we obtain
		\begin{multline} \label{H2_nu2_mu1_de2_2}
			\frac{1}{2} \frac{d}{dt} \left(\|v\|^2_{H^2} + \|\theta\|^2_{H^2}\right) + \nu_2\|\partial_2 Jv_1\|^2_{H^2} + \mu_1\|\partial_1 Jv_2\|^2_{H^2} + \delta_2\|\partial_2 J\theta\|^2_{H^2} 
			\\
			\leq C \left(\|v\|_{H^2} + \|\theta\|_{H^2}\right)\left(\|\partial_1 Jv_2\|^2_{H^2} + \|\partial_2 Jv_1\|^2_{H^2} +  \|\partial_2J\theta\|^2_{H^2}\right).
		\end{multline}
		
		\textbf{Step 4: $H^m$ estimate.} Let $m \in \mathbb{N}$ with $m \geq 3$. As in the previous parts, together with the energy estimate, we have 
		\begin{equation*}
			\frac{1}{2}\frac{d}{dt}\left(\|v\|^2_{H^m} + \|\theta\|^2_{H^m}\right) + \nu_2 \|\partial_2Jv_1\|^2_{H^m} + \mu_1 \|\partial_1Jv_2\|^2_{H^m} + \delta_2 \|\partial_2 J\theta\|^2_{H^m} = \sum^3_{i=1} K_i,
		\end{equation*}
		where by using the divergence-free condition and  calculus inequalities (see \cite[Lemma 3.4]{Majda_Bertozzi_2002}) 
		\begin{align*}
			K_1 &:= - \sum_{|\alpha| = m} \int_{\mathbb{R}^2} D^\alpha \mathbb{P}(J(Jv \cdot \nabla Jv_1)) D^\alpha v_1 \,dx 
			\\
			&\leq C(m) \left(\|\nabla Jv\|_{L^\infty} \|D^m\nabla Jv_1\|_{L^2} + \|D^m Jv\|_{L^2}\|\nabla Jv_1\|_{L^\infty}\right) \|D^m Jv_1\|_{L^2}
			\\
			&\leq C(m)\|v\|_{H^m} \left(\|D^mJv\|_{L^2} + \|D^{m+1}Jv\|_{L^2}\right)\|D^mJv\|_{L^2}
			\\
			&\leq C(m)\|v\|_{H^m} \left(\|\partial_2Jv_1\|^2_{H^m} + \|\partial_1Jv_2\|^2_{H^m}\right);
			\\
			K_2 &:= - \sum_{|\alpha| = m} \int_{\mathbb{R}^2} D^\alpha \mathbb{P}(J(Jv \cdot \nabla Jv_2)) D^\alpha v_2\,dx
			\leq C(m)\|v\|_{H^m} \left(\|\partial_2Jv_1\|^2_{H^m} + \|\partial_1Jv_2\|^2_{H^m}\right);
			\\
			K_3 &:= - \sum_{|\alpha| = m} \int_{\mathbb{R}^2} D^\alpha J(Jv \cdot \nabla J\theta) D^\alpha \theta\,dx. 
		\end{align*}
		It remains to estimate $K_3$. If $D^\alpha$ contains at least one time of $\partial_2$ then we decompose $K_3$ into $K_3 = K_{31} + K_{32}$, where
		\begin{align*}
			K_{31} &:= - \sum_{|\alpha| = m} \int_{\mathbb{R}^2} D^{\alpha-1} (\partial_2 Jv \cdot \nabla J\theta) D^{\alpha-1} \partial_2 J\theta\,dx 
			\\
			&\leq \sum_{|\alpha| = m} \|D^{\alpha-1} (\partial_2 Jv \cdot \nabla J\theta)\|_{L^2} \|D^{m-1} \partial_2 J\theta\|_{L^2}
			\\
			&\leq C(m)\left(\|\partial_2 Jv\|_{L^\infty} \|D^{m-1} \nabla J\theta\|_{L^2} + \|D^{m-1}\partial_2Jv\|_{L^2} \|\nabla J\theta\|_{L^\infty}\right)  \|D^{m-1} \partial_2J \theta\|_{L^2}
			\\
			&\leq C(m) \|\theta\|_{H^m}\left(\|\partial_2Jv_1\|^2_{H^m} + \|\partial_1Jv_2\|^2_{H^m} + \|\partial_2J\theta\|^2_{H^{m-1}}\right),
		\end{align*}
		here, we used the fact that since $m \geq 3$
		\begin{equation*}
			\|\nabla Jv\|_{L^\infty} \leq C(m) \|\nabla Jv\|_{H^m} \leq C(m)\|\partial_1v_2-\partial_2v_1\|_{H^m};
		\end{equation*}
		and similarly, 
		\begin{align*}
			K_{32} &:= - \sum_{|\alpha| = m} \int_{\mathbb{R}^2} \left(D^{\alpha-1} (Jv \cdot \nabla \partial_2J\theta) - Jv \cdot \nabla D^{\alpha-1}\partial_2J\theta\right) D^{\alpha-1} \partial_2 J\theta\,dx
			\\
			&\leq \sum_{|\alpha| = m}\|D^{\alpha-1} (Jv \cdot \nabla \partial_2J\theta) - Jv \cdot \nabla D^{\alpha-1}\partial_2J\theta\|_{L^2} \|D^{m-1} \partial_2 J\theta\|_{L^2}
			\\
			&\leq C(m)\left(\|\nabla Jv\|_{L^\infty} \|D^{m-1}\nabla \partial_2J\theta\|_{L^2} +  \|D^{m-1}Jv\|_{L^2} \|\nabla \partial_2J\theta\|_{L^\infty}\right) \|D^{m-1} \partial_2 J\theta\|_{L^2}.
			\\
			&\leq C(m) \|v\|_{H^m}\|\partial_2J\theta\|^2_{H^m}.
		\end{align*}
		Otherwise, if $D^\alpha$ only contains $\partial_1$ then 
		\begin{align*}
			K_3 &= -m\int_{\mathbb{R}^2} \partial_1^m (Jv_1 \partial_1J\theta + Jv_2 \partial_2J\theta) \partial_1^m J\theta\,dx = K_{34} + K_{35},
		\end{align*}
		where by using the Leibniz differentiation formula 
		\begin{align*}
			K_{35} &:= -m\int_{\mathbb{R}^2} \partial_1^m (Jv_2 \partial_2J\theta) \partial_1^m J\theta\,dx = -m \sum^m_{n = 0} C(n,m) \int_{\mathbb{R}^2} \partial_1^n (Jv_2) \partial_1^{m-n}(\partial_2J\theta) \partial^m_1\theta \,dx
			\\
			&\leq \sum^m_{n = 0} C(n,m) \|\partial_1^{m-n}(\partial_2J\theta)\|_{L^2} \|\partial_1^n (Jv_2)\|^\frac{1}{2}_{L^2}\|\partial_1^n (\partial_1Jv_2)\|^\frac{1}{2}_{L^2} \|\partial_1^m (J\theta)\|^\frac{1}{2}_{L^2} \|\partial_1^m (\partial_2J\theta)\|^\frac{1}{2}_{L^2}
			\\
			&\leq C(m) \left(\|v\|_{H^m} + \|\theta\|_{H^m}\right) \left( \|\partial_1Jv_2\|^2_{H^m} + \|\partial_2J\theta\|^2_{H^m}\right).
		\end{align*}
		Similarly, we rewrite $K_{34} = K_{341} + K_{342}$, where
		\begin{align*}
			K_{341} &:=  -m \int_{\mathbb{R}^2} \partial_1^m (\partial_2Jv_2 \partial_1J\theta) \partial_1^{m-1} J\theta\,dx
			\\
			&= -\sum^m_{n = 0} C(n,m)\int_{\mathbb{R}^2} \partial_1^n (\partial_2Jv_2) \partial^{m-n+1}_1(J\theta) \partial_1^{m-1} J\theta\,dx
			\\
			&= \sum^m_{n = 0} C(n,m)\int_{\mathbb{R}^2} \partial_1^n Jv_2\left[ \partial^{m-n+1}_1(\partial_2J\theta) \partial_1^{m-1} J\theta + \partial^{m-n+1}_1(J\theta) \partial_1^{m-1} \partial_2J\theta \right]\,dx
			=: \sum^m_{n=0} K_{341n}.
		\end{align*}
		If $n = 0$ then we write $K_{3410} = K_{34101} + K_{34102}$, where
		\begin{align*}
			K_{34101} &:= -C(m) \int_{\mathbb{R}^2} 2Jv_2 \partial^m_1(\partial_2J\theta) \partial_1^m J\theta \,dx
			\\
			&\leq C(m)\|\partial_1^{m} \partial_2 J\theta\|_{L^2} \|Jv_2\|^\frac{1}{2}_{L^2}\|\partial_1Jv_2\|^\frac{1}{2}_{L^2} \|\partial_1^{m} J\theta\|^\frac{1}{2}_{L^2} \|\partial_1^{m} \partial_2 J\theta\|^\frac{1}{2}_{L^2}
			\\
			&\leq C(m) \left(\|v\|_{H^m} + \|\theta\|_{H^m}\right) \left( \|\partial_1Jv_2\|^2_{H^m} + \|\partial_2J\theta\|^2_{H^m}\right);
			\\ 
			K_{34102} &:= -C(m) \int_{\mathbb{R}^2} \partial_1Jv_2[\partial^m_1(\partial_2J\theta)\partial^{m-1}_1J\theta +  \partial_1^{m-1} (\partial_2J\theta) \partial^m_1J\theta]\,dx
			\\
			&\leq C(m) \left(\|v\|_{H^m} + \|\theta\|_{H^m}\right) \left( \|\partial_1Jv_2\|^2_{H^m} + \|\partial_2J\theta\|^2_{H^m}\right).
		\end{align*}
		If $n = m$ then 
		\begin{align*}
			K_{341m} &= C(m)\int_{\mathbb{R}^2} \partial_1^m (Jv_2)\left[ \partial_1(\partial_2J\theta) \partial_1^{m-1} J\theta + \partial_1(J\theta) \partial_1^{m-1} \partial_2J\theta \right]\,dx
			\\
			&\leq C(m) \|\theta\|_{H^m}\left( \|\partial_1Jv_2\|^2_{H^m} + \|\partial_2J\theta\|^2_{H^m}\right).
		\end{align*}
		If $1 \leq n \leq m-1$ then
		\begin{align*}
			\sum^{m-1}_{n=1} K_{341n} &\leq C(m) \left(\|v\|_{H^m} + \|\theta\|_{H^m}\right)\left(\|\partial_1Jv_2\|^2_{H^m} + \|\partial_2J\theta\|^2_{H^m}\right).
		\end{align*}
		We now estimate the last term $K_{342}$ as follows
		\begin{align*}
			K_{342} &:= m\int_{\mathbb{R}^2} \partial_1^m (Jv_1 \partial_{11}J\theta)  \partial_1^{m-1} J\theta\,dx 
			\\
			&= \sum^m_{n=0} C(n,m) \int_{\mathbb{R}^2} \partial_1^n (Jv_1) \partial^{m-n+2}_1(J\theta)  \partial_1^{m-1} J\theta\,dx
			=: \sum^m_{n=0} K_{342n}.
		\end{align*}
		If $n = 0$ then 
		\begin{align*}
			K_{3420} &= C(m) \int_{\mathbb{R}^2} Jv_1 \partial^{m+2}_1(J\theta)  \partial_1^{m-1} J\theta\,dx
			\\
			&= C(m) \int_{\mathbb{R}^2} \partial_2Jv_2 \partial^{m+1}_1(J\theta)  \partial_1^{m-1} J\theta - Jv_1 \partial^{m+1}_1(J\theta)  \partial_1^{m} J\theta\,dx =: K_{34201} + K_{34202},
		\end{align*}
		where
		\begin{align*}
			K_{34201} &= -C(m) \int_{\mathbb{R}^2} Jv_2 [\partial^{m+1}_1(\partial_2J\theta)  \partial_1^{m-1} J\theta + \partial^{m+1}_1(J\theta)  \partial_1^{m-1} \partial_2J\theta]\,dx
			\\
			&= C(m) \int_{\mathbb{R}^2} \partial_1Jv_2 [\partial^{m}_1(\partial_2J\theta)  \partial_1^{m-1} J\theta + \partial^{m-1}_1(\partial_2J\theta)  \partial_1^m J\theta] + 2Jv_2\partial^{m}_1(J\theta)  \partial_1^m \partial_2J\theta\,dx
			\\
			&\leq C(m) \left(\|v\|_{H^m} + \|\theta\|_{H^m}\right)\left(\|\partial_1Jv_2\|^2_{H^m} + \|\partial_2J\theta\|^2_{H^m}\right);
			\\
			K_{34202} &= -C(m)\int_{\mathbb{R}^2} Jv_1 \partial_1(\partial^m_1J\theta)^2 \,dx 
			= C(m)\int_{\mathbb{R}^2} Jv_2 \partial_1^{m} (\partial_2 J\theta) \partial_1^{m} J\theta \,dx
			\\
			&\leq C(m) \left(\|v\|_{H^m}+ \|\theta\|_{H^m}\right)\left(\|\partial_1Jv_2\|^2_{H^m} + \|\partial_2J\theta\|^2_{H^m}\right).
		\end{align*}
		If $n = 1$ then similar to the estimates of $K_{34101}$ and $K_{34102}$
		\begin{align*}
			K_{3421} &= -C(m) \int_{\mathbb{R}^2} \partial_2 (Jv_2) \partial^{m+1}_1(J\theta)  \partial_1^{m-1} J\theta\,dx
			\\
			&= C(m) \int_{\mathbb{R}^2} Jv_2[ \partial^{m+1}_1(\partial_2J\theta)  \partial_1^{m-1} J\theta + \partial^{m+1}_1(J\theta)  \partial_1^{m-1} \partial_2J\theta]\,dx
			\\
			&\leq C(m) \left(\|v\|_{H^m} + \|\theta\|_{H^m}\right)\left(\|\partial_1Jv_2\|^2_{H^m} + \|\partial_2J\theta\|^2_{H^m}\right).
		\end{align*}
		If $2 \leq n \leq m$ then $m-n+2 \leq m$ and 
		\begin{align*}
			\sum^m_{n=2} K_{342n} &= -\sum^m_{n=2} C(n,m) \int_{\mathbb{R}^2} \partial_1^{n-1} (\partial_2Jv_2) \partial^{m-n+2}_1(J\theta)  \partial_1^{m-1} J\theta\,dx
			\\
			&= \sum^m_{n=1} C(n,m) \int_{\mathbb{R}^2} \partial_1^{n-1} (Jv_2)[ \partial^{m-n+2}_1(\partial_2J\theta)  \partial_1^{m-1} J\theta + \partial^{m-n+2}_1(J\theta)  \partial_1^{m-1} \partial_2J\theta]\,dx
			\\
			&\leq C(m) \left(\|v\|_{H^m} + \|\theta\|_{H^m}\right)\left(\|\partial_1Jv_2\|^2_{H^m} + \|\partial_2J\theta\|^2_{H^m}\right).
		\end{align*}
		Therefore, 
		\begin{multline} \label{Hm_nu2_mu1_de2}
			\frac{1}{2}\frac{d}{dt}\left(\|v\|^2_{H^m} + \|\theta\|^2_{H^m}\right) + \nu_2 \|\partial_2Jv_1\|^2_{H^m} + \mu_1 \|\partial_1Jv_2\|^2_{H^m} + \delta_2 \|\partial_2 J\theta\|^2_{H^m} 
			\\
			\leq C(m) \left(\|v\|_{H^m} + \|\theta\|_{H^m}\right)\left(\|\partial_2Jv_1\|^2_{H^m} + \|\partial_1Jv_2\|^2_{H^m} + \|\partial_2J\theta\|^2_{H^m}\right).
		\end{multline}
	
		\textbf{Step 5: Bootstrapping argument.}
		We define for all $t \in (0,T_\epsilon)$ and $m \in \mathbb{N}$
		\begin{align*}
			E^\epsilon_m(t) &:= \esssup_{s\in [0,t]} \left(\|v^\epsilon(s)\|^2_{H^m} + \|\theta^\epsilon(s)\|^2_{H^m}\right) 
			\\
			&\quad + 2\int_0^t \nu_2\|\partial_2 J^\epsilon v^\epsilon_1\|^2_{H^m} + \mu_1\|\partial_1 J^\epsilon v^\epsilon_2\|^2_{H^m} + \delta_2\|\partial_2 J^\epsilon\theta^\epsilon\|^2_{H^m} \,ds.
		\end{align*}
		Integrating in time \eqref{H1_nu2_mu1_de2_2}, \eqref{H2_nu2_mu1_de2_2} and \eqref{Hm_nu2_mu1_de2}, there exists $C_1 = C_1(\nu_2,\mu_1,\delta_2,m) > 0$ such that
		\begin{equation} \label{Hm_nu2_mu1_de2_main}
			E^\epsilon_m(t) \leq E^\epsilon_m(0) + C_1 (E^\epsilon_m(t))^\frac{3}{2} \qquad \forall t \in (0,T_\epsilon).
		\end{equation}
		To the end of this step, we aim to prove: \textit{Claim: Let $S := \{t \in (0,T^\epsilon) : E^\epsilon_m(t) \leq 2\epsilon^2_0\}$. Then $S = (0,T_\epsilon)$ and $T_\epsilon = \infty$.}
		
		\textit{5a) Hypothesis implies conclusion.} Assume that for some $t \in (0,T_\epsilon)$
		\begin{equation} \label{Hm_nu2_mu1_de2_2}
			E^\epsilon_m(t) \leq (4C^2_1)^{-1}.
		\end{equation}
		Therefore, by choosing $\epsilon_0 > 0$ such that $4C_1\epsilon_0 \leq 1$, it follows from  \eqref{Hm_nu2_mu1_de2_main} and \eqref{Hm_nu2_mu1_de2_2}  that 
		\begin{equation} \label{Hm_nu2_mu1_de2_3}
			E^\epsilon_m(t) \leq 2E^\epsilon_m(0)  \leq 2\epsilon^2_0 \leq (8C^2_1)^{-1}.
		\end{equation}
		
		\textit{5b) Conclusion is stronger than hypothesis.} Assume that \eqref{Hm_nu2_mu1_de2_3} holds for some $t_0 \in (0,T_\epsilon)$. For a given $\delta_0 > 0$, by the continuity
		\begin{equation*}
			E^\epsilon_m(t) < E^\epsilon_m(t_0) + \delta_0 \leq (8C^2_1)^{-1} + \delta_0 \qquad \forall t \in (t_0-t_{\delta_0},t_0+t_{\delta_0}),
		\end{equation*}
		which yields \eqref{Hm_nu2_mu1_de2_2} if we choose $\delta_0 \leq (8C^2_1)^{-1}$.
		
		\textit{5c) Conclusion is closed.} Let $t_n,t \in (0,T_\epsilon)$ such that $t_n \to t$ as $n \to \infty$. If $E^\epsilon_m(t_n) \leq 2\epsilon^2_0$ for all $n \in \mathbb{N}$ then by the continuity we obtain $E^\epsilon_m(t) \leq 2\epsilon^2_0$ as well.
		
		\textit{5d) Base case.} By the continuity in time of $(v^\epsilon,\theta^\epsilon)$ in $V^m \times H^m$, we can find some $T'_\epsilon \in (0,T_\epsilon)$ 
		\begin{equation*} 
			E^\epsilon_m(t) \leq 2E^\epsilon_m(0) \leq 2\epsilon^2_0 \leq (4C^2_1)^{-1} \qquad \forall t \in (0,T'_\epsilon).
		\end{equation*}
		This implies that $S$ is a non-empty set. We then apply the abstract bootstrap principle (see \cite[Proposition 1.21]{Tao_2006}) to obtain the first part of the claim. While the second part follows immediately by using Step 1.
		
		\textbf{Step 6: Passing to the limit.} From the previous steps and \eqref{B1_nu2_mu1_de2_ODE}, we find that for $m \in \mathbb{N}$
		\begin{align*}
			(v^\epsilon,\theta^\epsilon) \quad &\text{is uniformly bounded in}\quad  L^\infty(0,\infty;H^m(\mathbb{R}^2));
			\\
			(\partial_2J^\epsilon v^\epsilon_1, \partial_1J^\epsilon v^\epsilon_2, \partial_2J^\epsilon \theta^\epsilon) \quad   &\text{is uniformly bounded in} \quad L^2(0,\infty;H^m(\mathbb{R}^2));
			\\
			(\partial_tv^\epsilon,\partial_t\theta^\epsilon) \quad &\text{is uniformly bounded in}  \quad L^2(0,T;H^{m-1}(\mathbb{R}^2)) \qquad \forall T \in (0,\infty).
		\end{align*}
		Then there exist a subsequence (still denoted by) $(v^\epsilon, \theta^\epsilon)$ and $(v,\theta)$ such that as $\epsilon \to 0$
		\begin{align*}
			&&(v^\epsilon,\theta^\epsilon) &\xrightharpoonup{*} (v,\theta) & &\text{in} \quad L^\infty(0,\infty;H^m(\mathbb{R}^2)),&&
			\\
			&&(\partial_2J^\epsilon v^\epsilon_1, \partial_1J^\epsilon v^\epsilon_2, \partial_2J^\epsilon \theta^\epsilon)
			&\rightharpoonup (\partial_2v_1,\partial_1v_2,\partial_2\theta) & &\text{in} \quad L^2(0,\infty;H^m(\mathbb{R}^2)),&&
			\\
			&&(\partial_tv^\epsilon,\partial_t\theta^\epsilon) &\rightharpoonup
			(\partial_t v,\partial_t\theta) & &\text{in}\quad L^2(0,T;H^{m-1}(\mathbb{R}^2)) \qquad \forall T \in (0,\infty).&&
		\end{align*}
		Therefore, for $m = 1$, by using the local compact embedding $H^1 \hookrightarrow L^2 \hookrightarrow H^{-1}$ and applying the Aubin-Lions lemma, we conclude that, for a given $R \in \mathbb{N}$, there exists a
		subsequence $(v^{\epsilon,R},\theta^{\epsilon,R})$ such that for all $T \in (0,\infty), x_0 \in \mathbb{R}^2$ as $\epsilon \to 0$
		\begin{align*}
			&&(v^{\epsilon,R},\theta^{\epsilon,R}) &\rightarrow (v,\theta) &&\text{in} \quad  L^2(0,T;L^2(B_R(x_0))),&&
			\\
			&&(\partial_2J^{\epsilon} v^{\epsilon,R}_1, \partial_1J^\epsilon v^{\epsilon,R}_2, \partial_2J^\epsilon \theta^{\epsilon,R})
			&\rightarrow (\partial_2v_1,\partial_1v_2,\partial_2\theta) & &\text{in}\quad L^2(0,T;L^2(B_R(x_0))).&&
		\end{align*}
		Thanks to the above convergences, we can apply the Cantor diagonal argument in both $n := \lfloor\epsilon^{-1}\rfloor$\footnote{Here for $x \in \mathbb{R}$, $\lfloor \cdot \rfloor$ denotes the usual floor function.} and $R$ to show that for all $r \in (0,\infty)$ (up to another subsequence) as $\epsilon \to 0$ 
		\begin{align*}
			&&(v^\epsilon,\theta^\epsilon) &\rightarrow (v,\theta) &&\text{in} \quad  L^2(0,T;L^2(B_r(x_0))),&&
			\\
			&&(\partial_2J^{\epsilon} v^\epsilon_1, \partial_1J^\epsilon v^\epsilon_2, \partial_2J^\epsilon \theta^\epsilon)
			&\rightarrow (\partial_2v_1,\partial_1v_2,\partial_2\theta) & &\text{in}\quad L^2(0,T;L^2(B_r(x_0)).&&
		\end{align*}
		By the aid of these strong convergences,
		and noticing that $(\nabla v^\epsilon,\nabla \theta^\epsilon) \rightharpoonup (\nabla v, \nabla \theta )$ in $L^2(0,T;L^2(\mathbb{R}^2))$ for any $T \in (0,\infty)$, one can easily see
		that the nonlinear terms 
		\begin{align*}
			\mathbb{P}(J^\epsilon(J^\epsilon v^\epsilon \cdot \nabla J^\epsilon v^\epsilon)) \text{ and } J^\epsilon(J^\epsilon v^\epsilon \cdot \nabla J^\epsilon \theta^\epsilon) \quad \text{ converge to } \quad \mathbb{P}(v \cdot \nabla v) \text{ and } v \cdot \nabla \theta,
		\end{align*}
		respectively, in $\mathcal{D}'([0,T) \times \mathbb{R}^2)$\footnote{The dual space of $C^\infty_0([0,T) \times \mathbb{R}^2)$.}. Therefore, in $\mathcal{D}'([0,T) \times \mathbb{R}^2)$ as $\epsilon \to 0$, \eqref{B1_nu2_mu1_de2_ODE} converges to 
		\begin{equation*} 
			\frac{d}{dt}\left(
			\begin{matrix}
				v
				\\
				\theta
			\end{matrix}\right) = \left(
			\begin{matrix}
				-\mathbb{P}(v \cdot \nabla v)  + \mathbb{P}(\nu_2\partial_{22} v_1,\mu_1\partial_{11}v_2) + \lambda \mathbb{P}(\theta e_2)
				\\
				- v \cdot \nabla \theta - \lambda v_2 + \delta_2 \partial_{22} \theta
			\end{matrix}\right)
		\end{equation*}
		and it can be checked that the initial data $(v(0),\theta(0)) = (v_0,\theta_0)$. In fact, as in the proof of Theorem \ref{theo_nu2_mu1_de} (Steps 3, 4a and 4b), for $m \geq 3$, by using the properties of $J^\epsilon$, we have $(v^\epsilon,\theta^\epsilon)$ is a Cauchy sequence in $L^2(0,\infty;L^2(\mathbb{R}^2))$, then the above convergence can be understood in a stronger sense, but we skip the details (see also in \cite{Majda_Bertozzi_2002}). In addition, from the first equation, i.e., 
		\begin{equation*}
			\mathbb{P}(\partial_t v + v \cdot \nabla v - \lambda\theta e_2 - (\nu_2\partial_{22} v_1,\mu_1\partial_{11}v_2)) = 0,
		\end{equation*}
		which together with the theorem de Rham implies that there exists a scalar function $\pi$ such that $(v,\theta,\pi)$ satisfies \eqref{B} in the sense of distributions. Using the Lions-Magenes lemma (see \cite[Lemma 1.2, Chapter 3]{Temam_2001}) it can be seen that $(v,\theta) \in C([0,T];H^m(\mathbb{R}^2))$ since $(v,\theta) \in L^2(0,T;H^m(\mathbb{R}^2))$ and $(\partial_t v,\partial_t\theta) \in L^2(0,T;H^{m-1}(\mathbb{R}^2))$ for any $T \in (0,\infty)$. Moreover, $(v,\theta)$ satisfies $\forall t \in (0, \infty)$
		\begin{equation} \label{Hm_nu2_mu1_de2_4} 
			\|v(t)\|^2_{H^m} + \|\theta(t)\|^2_{H^m}
			+ 2\int_0^t \nu_2\|\partial_2v_1\|^2_{H^m} + \mu_1\|\partial_1 v_2\|^2_{H^m} + \delta_2\|\partial_2 \theta\|^2_{H^m} \,ds \leq 2\epsilon^2_0.
		\end{equation}
	
		\textbf{Step 7: $H^1$ uniqueness.} In fact, this step can be done as in the proof of Theorem \ref{theo_nu2_mu1_de}. However, with the help of $\delta_2 > 0$, we can have a simple proof and provide it here. It is enough to show the uniqueness of $H^1$ solutions. Assume that $(v,\theta_1,\pi_1)$ and $(u,\theta_2,\pi_2)$ are two solutions to \eqref{B} with the same initial data. If we denote $V := v-u$, $\Theta := \theta_1-\theta_2$ and $\pi := \pi_1-\pi_2$ then it follows that 
		\begin{equation*} 
			\frac{1}{2} \frac{d}{dt} \left(\|V\|^2_{L^2} + \|\Theta\|^2_{L^2}\right) + \nu_2\|\partial_2 V_1\|^2_{L^2} + \mu_1\|\partial_1 V_2\|^2_{L^2} + \delta_2\|\partial_2 \Theta\|^2_{L^2} = H_1 + H_2,
		\end{equation*}
		where 
		\begin{align*}
			H_1 &:= -\int_{\mathbb{R}^2} V \cdot \nabla v \cdot V \,dx \leq C(\nu_2,\mu_1) \|\nabla v\|^2_{L^2} \|V\|^2_{L^2} +  \frac{\nu_2}{4}\|\partial_2V_1\|^2_{L^2} + \frac{\mu_1}{6}\|\partial_1V_2\|^2_{L^2},
			\\
			H_2 &:= -\int_{\mathbb{R}^2} V \cdot \nabla \theta_1 \Theta \,dx = -\int_{\mathbb{R}^2} V_1 \partial_1\theta_1 \Theta + V_2 \partial_2\theta_1 \Theta \,dx =: H_{21} + H_{22},
			\\
			H_{21} &\leq C \|\partial_1\theta_1\|_{L^2} \|V_1\|^{\frac{1}{2}}_{L^2} \|\partial_1V_1\|^{\frac{1}{2}}_{L^2} \|\Theta\|^{\frac{1}{2}}_{L^2} \|\partial_2\Theta\|^{\frac{1}{2}}_{L^2}
			\\
			&\leq C(\nu_2,\mu_1,\delta_2) \|\partial_1\theta_1\|^2_{L^2}\left(\|V\|^2_{L^2} + \|\Theta\|^2_{L^2}\right) + \frac{\nu_2}{4}\|\partial_2V_1\|^2_{L^2} + \frac{\mu_1}{6}\|\partial_1V_2\|^2_{L^2} + \frac{\delta}{4} \|\partial_2\Theta\|^2_{L^2};
			\\
			H_{22} &\leq C\|\partial_2\theta_1\|_{L^2} \|V_2\|^{\frac{1}{2}}_{L^2} \|\partial_1V_2\|^{\frac{1}{2}}_{L^2} \|\Theta\|^{\frac{1}{2}}_{L^2} \|\partial_2\Theta\|^{\frac{1}{2}}_{L^2}
			\\
			&\leq C(\mu_1,\delta_2) \|\partial_2\theta_1\|^2_{L^2} \left(\|V\|^2_{L^2} + \|\Theta\|^2_{L^2}\right) +  \frac{\mu_1}{6}\|\partial_1V_2\|^2_{L^2} + \frac{\delta}{4} \|\partial_2\Theta\|^2_{L^2}.
		\end{align*}
		Therefore, for $t \geq 0$
		\begin{equation*}
			\frac{1}{2} \frac{d}{dt} Y(t) \leq C(\nu_2,\mu_1,\delta_2) \left(\|\nabla v\|^2_{L^2} + \|\nabla \theta_1\|^2_{L^2}\right) Y(t),
		\end{equation*}
		where 
		\begin{equation*}
			Y(t) := \|V(t)\|^2_{L^2} + \|\Theta(t)\|^2_{L^2} \qquad\text{with}\quad Y(0) = 0.
		\end{equation*}
		Moreover, from the previous steps we know that 
		\begin{equation*}
			\int^T_0 \|\nabla v\|^2_{L^2} + \|\nabla \theta_1\|^2_{L^2} \,dt < \infty \qquad \forall T \in (0,\infty),
		\end{equation*}
		which implies $V = (0,0)$ and $\Theta = 0$ by using Gronwall inequality. 
		
		\textbf{Step 8: $\dot{H}^{m-1}$ large-time behavior.} In this step, we will prove:
		\begin{align*}
			 &\textbf{1)} \quad \int^\infty_0 f(t) \,dt \leq C(m,\lambda,\nu_2,\mu_1,\delta_2)(\epsilon_0^2 + \epsilon^4_0),
			 \\
			 &\textbf{2)} \quad f(t) - f(s) \leq C(m,\lambda,\nu_2,\mu_1,\delta_2)(\epsilon_0^2 + \epsilon^3_0)(t-s) \qquad \text{for} \quad 0 \leq s \leq t < \infty,
		\end{align*}
		where
		\begin{equation*}
			f(t) := \|\partial_2v_1(t)\|^2_{\dot{H}^{m-2}} + \|\partial_1v_2(t)\|^2_{\dot{H}^{m-2}} +  \|\theta(t)\|^2_{\dot{H}^{m-1}} \qquad \forall m \geq 2.
		\end{equation*}
		We then apply \cite[Lemma 2.3]{Lai-Wu-Zhong_2021} to conclude that
		\begin{equation*}
			\|v(t)\|_{\dot{H}^{m-1}} + \|\theta(t)\|_{\dot{H}^{m-1}} \to 0 \quad \text{ as } \quad t \to \infty.
		\end{equation*}
		
		\textbf{8a) The first part.} We first see that \eqref{Hm_nu2_mu1_de2_4} yields for $m \geq 2$
		\begin{equation*}
			\int^\infty_0 \|\partial_2v_1(t)\|^2_{\dot{H}^{m-2}} + \|\partial_1v_2(t)\|^2_{\dot{H}^{m-2}} + \|\partial_2\theta\|^2_{\dot{H}^{m-2}}\,dt 
			\leq  C(m,\nu_2,\mu_1,\delta_2)\epsilon_0^2. 
		\end{equation*}
		We claim that 
		\begin{equation*}
			\int^\infty_0  \|\partial_1\theta\|^2_{\dot{H}^{m-2}}\,dt \leq C(m,\lambda,\nu_2,\mu_1,\delta_2)(\epsilon_0^2 + \epsilon^4_0),
		\end{equation*}
		which and the previous estimate give us the first part. We now prove the claim. It can be seen from \eqref{B} that for $s \in (0,\infty)$
		\begin{align*}
			&\lambda\int^s_0 \|\partial_1 \theta\|^2_{\dot{H}^{m-2}} \,dt = \sum^7_{i=1} R_{i} := \sum_{|\alpha| = m-2} \int^s_0 \frac{d}{dt}\int_{\mathbb{R}^2} D^\alpha \partial_1 v_2 D^\alpha \partial_1\theta\,dxdt 
			\\
			&-\delta_2\sum_{|\alpha| = m-2} \int^s_0 \int_{\mathbb{R}^2} D^\alpha \partial_{122}\theta D^\alpha \partial_1v_2\,dxdt  
			-\lambda\sum_{|\alpha| = m-2} \int^s_0 \int_{\mathbb{R}^2} D^\alpha\partial_{12}(-\Delta)^{-1} \partial_2\theta D^\alpha \partial_1\theta \,dxdt 
			\\
			& + \lambda \int^s_0 \|\partial_1v_2\|^2_{\dot{H}^{m-2}} \,dt - \sum_{|\alpha| = m-2} \int^s_0 \int_{\mathbb{R}^2} D^\alpha \partial_1\langle \mathbb{P}(\nu_2\partial_{22}v_1,\mu_1\partial_{11}v_2),e_2\rangle D^\alpha \partial_1\theta\,dxdt 
			\\
			&+ \sum_{|\alpha| = m-2} \int^s_0 \int_{\mathbb{R}^2} D^\alpha \partial_1\langle \mathbb{P}(v \cdot \nabla v),e_2\rangle D^\alpha \partial_1\theta\,dxdt
			+ \sum_{|\alpha| = m-2} \int^s_0 \int_{\mathbb{R}^2} D^\alpha \partial_1(v \cdot \nabla \theta) D^\alpha \partial_1v_2\,dxdt.  
		\end{align*}
		We first find that for some $\epsilon \in (0,1)$
		\begin{align*}
			R_1 &\leq  \|\partial_1v_2(s)\|_{\dot{H}^{m-2}} \|\partial_1\theta(s)\|_{\dot{H}^{m-2}} + \|\partial_1v_2(0)\|_{\dot{H}^{m-2}} \|\partial_1\theta(0)\|_{\dot{H}^{m-2}};
			\\
			R_2,R_3,R_4 &\leq C(\lambda,\delta_2) \int^s_0 \|\partial_1v_2\|^2_{\dot{H}^{m-1}} + \|\partial_1v_2\|^2_{\dot{H}^{m-2}} +  \|\partial_2\theta\|^2_{\dot{H}^{m-1}} + \|\partial_2\theta\|^2_{\dot{H}^{m-2}} \,dt;
			\\
			R_5 &= - \sum_{|\alpha| = m-2} \int^s_0 \int_{\mathbb{R}^2} D^\alpha (\mu_1 \partial_{111}v_2 + \partial_{12}(-\Delta)^{-1}(\nu_2\partial_{122}v_1 + \mu_1\partial_{211}v_2)) D^\alpha \partial_1\theta \,dxdt
			\\
			&\leq C(\lambda,\nu_2,\mu_1,\epsilon) \int^s_0 \|\partial_2v_1\|^2_{\dot{H}^{m-1}} +  \|\partial_1v_2\|^2_{\dot{H}^m} + \|\partial_1v_2\|^2_{\dot{H}^{m-1}}\,dt + \epsilon \lambda \int^s_0 \|\partial_1\theta\|^2_{\dot{H}^{m-2}} \,dt,
		\end{align*}
		where we used the standard double Riesz transform (see \cite{Stein1970})  
		\begin{equation*}
			\|\partial_{ij}(-\Delta)^{-1} f\|_{L^p(\mathbb{R}^2)} \leq C_p \|f\|_{L^p(\mathbb{R}^2)} \qquad \text{for} \quad i,j \in \{1,2\}, p \in (1,\infty).
		\end{equation*}
		We continue with $R_6$ as follows
		\begin{equation*}
			R_6 = \sum_{|\alpha| = m-2} \int^s_0 \int_{\mathbb{R}^2} D^\alpha [\partial_1(v \cdot \nabla v_2) + \partial_{12}(-\Delta)^{-1}(\partial_1(v \cdot \nabla v_1) + \partial_2(v \cdot \nabla v_2))] D^\alpha \partial_1\theta\,dxdt = R_{61} + R_{62}, 
		\end{equation*}
		where for $m = 2$
		\begin{align*}
			R_{61} 
			= \int^s_0 \int_{\mathbb{R}^2} \partial_1(v \cdot \nabla v_2) \partial_1\theta \,dxdt 
			\leq C(\lambda,\epsilon) \int^s_0 \|v\|^2_{H^2}\|\partial_1v_2\|^2_{\dot{H}^1} \,dt + \epsilon\lambda \int^s_0 \|\partial_1\theta\|^2_{L^2} \,dt 
		\end{align*}
		and for $m \geq 3$, we rewrite $R_{61} = R_{611} + R_{612}$ and obtain
		\begin{align*}
			R_{611} &:= \sum_{|\alpha| = m-2} \int^s_0 \int_{\mathbb{R}^2} D^\alpha (\partial_1v \cdot \nabla v_2) D^\alpha \partial_1\theta\,dxdt 
			\\
			&\leq C(m) \int^s_0 \|D^{m-1}v\|_{L^2}\|\nabla v\|_{L^\infty}  \|D^{m-2}\partial_1\theta\|_{L^2} \,dt
			\\
			&\leq C(\lambda,\epsilon) \int^s_0 \|v\|_{H^m} \left(\|\partial_2v_1\|^2_{\dot{H}^{m-2}} +  \|\partial_1v_2\|^2_{\dot{H}^{m-2}}\right)\,dt + \epsilon \lambda \int^s_0 \|\partial_1\theta\|^2_{\dot{H}^{m-2}} \,dt;
			\\
			R_{612} &:= \sum_{|\alpha| = m-2} \int^s_0 \int_{\mathbb{R}^2} D^\alpha (v \cdot \nabla \partial_1v_2) D^\alpha \partial_1\theta\,dxdt
			\\
			&\leq C(m) \int^s_0 \left(\|D^{m-2}v\|_{L^2}\|\nabla\partial_1v_2 \|_{L^\infty} + \|v\|_{L^\infty} \|D^{m-1} \partial_1v_2\|_{L^2}\right) \|D^{m-2}\partial_1\theta\|_{L^2} \,dt
			\\
			&\leq C(\lambda,\epsilon) \int^s_0 \|v\|^2_{H^m}   \|\partial_1v_2\|^2_{H^m}\,dt + \epsilon \lambda \int^s_0 \|\partial_1\theta\|^2_{\dot{H}^{m-2}} \,dt.
		\end{align*}
		We then estimate $R_{62}$ by
		\begin{equation*}
			R_{62} = -\sum_{|\alpha| = m-2} \int^s_0 \int_{\mathbb{R}^2} D^\alpha [\partial_{11}(-\Delta)^{-1}(v \cdot \nabla v_1) + \partial_{12}(-\Delta)^{-1}(v \cdot \nabla v_2))] D^\alpha \partial_{12}\theta\,dxdt,
		\end{equation*}
		where for $m = 2$
		\begin{align*}
			R_{62} \leq C \int^s_0 \|v\|_{H^2} \left(\|\partial_2v_1\|^2_{L^2} + \|\partial_1v_2\|^2_{L^2} + \|\partial_2\theta\|^2_{\dot{H}^1}\right) \,dt
		\end{align*}
		and for $m \geq 3$
		\begin{align*}
			R_{62} &\leq C(m) \int^s_0 \left(\|D^{m-2}v\|_{L^2}\|\nabla v\|_{L^\infty} + \|v\|_{L^\infty} \|D^{m-1} v\|_{L^2}\right) \|D^{m-1}\partial_2\theta\|_{L^2} \,dt
			\\
			&\leq C(m) \int^s_0 \|v\|_{H^m} \left(\|\partial_2v_1\|^2_{H^{m-2}} + \|\partial_1v_2\|^2_{H^{m-2}} + \|\partial_2\theta\|^2_{\dot{H}^{m-1}}\right) \,dt.
		\end{align*}
		It remains to bound $R_7$. Similar to $R_6$, we find that for $m = 2$
		\begin{align*}
			R_7 &\leq \int^s_0 \|v\|_{L^\infty}\|\nabla\theta\|_{L^2}\|\partial_{11}v_2\|_{L^2}\,dt 
			\\
			&\leq C(\lambda,\epsilon) \int^s_0 \left(\|v\|_{H^2} + \|v\|^2_{H^2}\right)\left(\|\partial_1v_2\|^2_{\dot{H}^1} + \|\partial_2\theta\|^2_{L^2}\right) \,dt + \epsilon\lambda \int^s_0 \|\partial_1\theta\|^2_{L^2} \,dt
		\end{align*}
		and for $m \geq 3$
		\begin{align*}
			R_7 &\leq C(m)\int^s_0 \left(\|v\|_{H^m} + \|\theta\|_{H^m}\right)\left(\|\partial_2v_1\|^2_{H^{m-1}} + \|\partial_1v_2\|^2_{H^{m-1}} + \|\partial_2\theta\|^2_{\dot{H}^{m-2}}\right) \,dt \\
			&\quad + C(\lambda,\epsilon)\int^s_0 \|v\|^2_{H^m} \|\partial_1v_2\|^2_{\dot{H}^{m-1}}\,dt  + \epsilon\lambda \int^s_0 \|\partial_1\theta\|^2_{\dot{H}^{m-2}}\,dt.
		\end{align*}
		Therefore, by choosing $\epsilon = \frac{1}{10}$ and using \eqref{Hm_nu2_mu1_de2_4}, the claim follows.
		
		\textbf{8b) The second part.} Similar to the previous part, we have
		\begin{align*}
			&\frac{1}{2}\frac{d}{dt}\left(\|D^{m-2}\partial_2v_1\|^2_{L^2} + \|D^{m-2}\partial_1v_2\|^2_{L^2} + \|D^{m-1}\theta\|^2_{L^2}\right) = \sum^9_{i=1} G_i :=
			\\
			&\quad\delta_2\sum_{|\alpha| = m-2}  \int_{\mathbb{R}^2} D^\alpha \nabla \partial_{22}\theta \cdot D^\alpha \nabla \theta\,dx + \lambda \sum_{|\alpha| = m-2}  \int_{\mathbb{R}^2} D^\alpha \partial_1\langle\mathbb{P}(\theta e_2),e_2\rangle D^\alpha \partial_1v_2\,dx
			\\
			&+ \lambda \sum_{|\alpha| = m-2}  \int_{\mathbb{R}^2} D^\alpha \partial_2\langle\mathbb{P}(\theta e_2),e_1\rangle D^\alpha \partial_2v_1\,dx - \lambda\sum_{|\alpha| = m-2}  \int_{\mathbb{R}^2} D^\alpha \nabla v_2 \cdot D^\alpha \nabla \theta\,dx
			\\
			&+ \sum_{|\alpha| = m-2}  \int_{\mathbb{R}^2} D^\alpha \partial_1\langle \mathbb{P}(\nu_2\partial_{22}v_1,\mu_1\partial_{11}v_2),e_2\rangle D^\alpha \partial_1v_2\,dx 
			\\
			&+ \sum_{|\alpha| = m-2}  \int_{\mathbb{R}^2} D^\alpha \partial_2\langle \mathbb{P}(\nu_2\partial_{22}v_1,\mu_1\partial_{11}v_2),e_1\rangle D^\alpha \partial_2v_1\,dx - \sum_{|\alpha| = m-2}  \int_{\mathbb{R}^2} D^\alpha \nabla (v \cdot \nabla \theta) \cdot D^\alpha \nabla \theta\,dx
			\\
			& - \sum_{|\alpha| = m-2}  \int_{\mathbb{R}^2} D^\alpha \partial_2\langle \mathbb{P}(v \cdot \nabla v),e_1\rangle D^\alpha \partial_2v_1\,dx
			- \sum_{|\alpha| = m-2}  \int_{\mathbb{R}^2} D^\alpha \partial_1\langle \mathbb{P}(v \cdot \nabla v),e_2\rangle D^\alpha \partial_1v_2\,dx.
		\end{align*}
		For the linear terms, it can be easily seen that for $m \geq 2$
		\begin{equation*}
			\sum^6_{i=1}G_i \leq C(\lambda,\nu_2,\mu_1)\left(\|v(t)\|^2_{H^m} + \|\theta(t)\|^2_{H^m}\right).
		\end{equation*}
		The term $G_7$ is bounded by using the divergence-free condition as follows
		\begin{align*}
			G_7 (m = 2) &= -\int_{\mathbb{R}^2} \nabla v \cdot \nabla \theta \cdot \nabla \theta\,dx \leq C\left(\|v(t)\|^2_{H^2} + \|\theta(t)\|^2_{H^2}\right);
			\\
			G_7 (m \geq 3) &= -\sum_{|\alpha| = m-2} \int_{\mathbb{R}^2} [D^{\alpha + 1} (v \cdot \nabla \theta) - v \cdot \nabla D^{\alpha+1}\theta ]\cdot D^{\alpha+1} \theta\,dx
			\leq C(m)\|v(t)\|_{H^m} \|\theta(t)\|^2_{H^m}.
		\end{align*}
		It remains to estimate $G_8$ and $G_9$. These terms can be bounded in the same way. We only provide the estimate of $G_8$ by writing $G_8 = G_{81} + G_{82}$, where 
		\begin{align*}
			G_{81} (m = 2) &:= \int_{\mathbb{R}^2} v \cdot \nabla v_1 \partial_{22}v_1\,dx \leq C \|v\|^3_{H^2};
			\\
			G_{81} (m \geq 3) &:=  \sum_{|\alpha| = m-2}  \int_{\mathbb{R}^2} D^\alpha (v \cdot \nabla v_1) D^\alpha \partial_{22}v_1\,dx \leq C(m) \|v\|^3_{H^m},
		\end{align*}
		and similarly
		\begin{align*}
			G_{82} &:=  \sum_{|\alpha| = m-2}  \int_{\mathbb{R}^2} D^\alpha[ \partial_{12}(-\Delta)^{-1}(v \cdot \nabla v_1) + \partial_{22}(-\Delta)^{-1}(v \cdot \nabla v_2)] D^\alpha \partial_{12}v_1\,dx
			\leq C(m) \|v\|^3_{H^m}.
		\end{align*}
		Integrating in time and using \eqref{Hm_nu2_mu1_de2_4} lead to the second part and finishes the proof.
	\end{proof}
	
	We now give a proof for the second case.
	
	\begin{proof}[Proof of Theorem \ref{theo_nu2_mu1_de2}: The case $\delta_1 > 0$ and $\delta_2 = 0$.]
		The proof in this case uses the same idea as in the previous one. To avoid repeating the calculations, we will mention what should be changed. Following the proof above, we only need to replace $\delta_2$ by $\delta_1$ and exchange the role of $J\partial_2\theta$ and $J\partial_1\theta$ to each other with using the divergence-free condition of $v$ and $Jv$ as well.
	\end{proof}

	%
	%---------------------------------
	\section{Appendix} \label{sec:app}
	%---------------------------------
	%
	
	We will provide the detailed proofs of Steps 2d, 2e, 2f, 2g, \eqref{Lp} in the proof of Theorem \ref{theo_nu2_mu1_de} and the point number 4 in Remark \ref{rm1} as follows.
	
	%--------------------------------------------------------
	\subsection{Appendix A: Proof of \eqref{Hs_estimate} for $s > 1$}
	%--------------------------------------------------------
	
	We start by giving the proof of the case $s \in (1,2)$ and $\delta \in [s,s+1]$.
	
	\begin{proof}[Proof of Step 2d: The case $s \in (1,2)$ and $s \leq \delta \leq s+1$] We will consider the cases $\delta = s$, $\delta = s+1$ and $\delta \in (s,s+1)$, respectively, as follows.
		
		$\bullet$ If $\delta = s$ then we find that 
		\begin{equation*} 
			\frac{1}{2}\frac{d}{dt}\|v^n\|^2_{\dot{H}^s}  + \min\{\nu_2,\mu_1\}\|v^n\|^2_{\dot{H}^{s+1}} \leq I_5 + I_6,
		\end{equation*}
		where for some $\epsilon \in (0,1)$
		\begin{align*}
			I_5 &:= \lambda_1\int_{\mathbb{R}^2} \Lambda^s (\mathbb{P}(\theta^n e_2)) \cdot \Lambda^s v^n \,dx 
			\\
			&\leq C(T^n_*,\nu_2,\mu_1,\lambda_i,v_0,\theta_0,\epsilon) \left(\|\theta^n\|^2_{\dot{H}^s} + 1\right) + \epsilon \min\{\nu_2,\mu_1\} \|v^n\|^2_{\dot{H}^{s+1}};
			\\
			I_6 &:= -\int_{\mathbb{R}^2} \Lambda^{s-1}(v^n \cdot \nabla v^n) \cdot \Lambda^{s+1} v^n \,dx.
		\end{align*}
		Since $s - 1 \in  (0,1)$, $I_6$ can be bounded in the same way as $I_2$. 
		
		$\bullet$ If $\delta = s+1$ then we define $\sigma' := \delta - 2 = s - 1 \in (0,1)$ and find that 
		\begin{align*}
			\frac{1}{2}\frac{d}{dt} \|v^n\|^2_{\dot{H}^\delta} + \min\{\nu_2,\mu_1\}\|v^n\|^2_{\dot{H}^{\delta+1}} \leq I_7 + I_8,
		\end{align*}
		where for some $\epsilon \in (0,1)$
		\begin{align*}
			I_7 &:= \sum^2_{i=1} \lambda_1\int_{\mathbb{R}^2} \Lambda^s \partial_i \mathbb{P}(\theta^ne_2) \cdot \Lambda^s \partial_i v^n\,dx 
			\leq C(\nu_2,\mu_1,\lambda_1,\epsilon) \|\theta^n\|^2_{\dot{H}^s}   + \epsilon\min\{\nu_2,\mu_1\}\|v^n\|^2_{\dot{H}^{\delta+1}};
			\\
			I_8 &:= - \sum^2_{i=1} \int_{\mathbb{R}^2} \Lambda^{\sigma'}  (\partial_iv^n \cdot \nabla v^n + v^n \cdot \nabla \partial_iv^n) \cdot \Lambda^\delta \partial_iv^n \,dx =: \sum^2_{i=1} I_{81i} + I_{82i},	
			\\
			I_{81i} &\leq C(s) \|\Lambda^{\delta+1}v^n\|_{L^2} \times
			\begin{cases}
				\|\Lambda^{\sigma'} \partial_iv^n\|_{L^4}\|\nabla v^n\|_{L^4} + \|\partial_iv^n\|_{L^\frac{4}{1-2\sigma'}}\|\Lambda^{\sigma'} \nabla v^n\|_{L^\frac{4}{1+2\sigma'}} \quad &\text{if } \sigma' \in (0,\frac{1}{2}),
				\\
				\|\Lambda^{\sigma'} \partial_iv^n\|_{L^4}\|\nabla v^n\|_{L^4} + \|\partial_iv^n\|_{L^6}\|\Lambda^{\sigma'} \nabla v^n\|_{L^3} \quad &\text{if } \sigma' \in [\frac{1}{2},1),
			\end{cases}
			\\
			I_{82i} &\leq C(s)\|\Lambda^{\delta+1}v^n\|_{L^2} \times
			\begin{cases}
				\|\Lambda^{\sigma'} v^n\|_{L^4}\|\nabla \partial_iv^n\|_{L^4} + \|v^n\|_{L^\frac{4}{1-2\sigma'}}\|\Lambda^{\sigma'} \nabla \partial_iv^n\|_{L^\frac{4}{1+2\sigma'}} \quad &\text{if }  \sigma' \in (0,\frac{1}{2}),
				\\
				\|\Lambda^{\sigma'} v^n\|_{L^4}\|\nabla \partial_iv^n\|_{L^4} + \|v^n\|_{L^6}\|\Lambda^{\sigma'} \nabla \partial_iv^n\|_{L^3} \quad &\text{if } \sigma' \in [\frac{1}{2},1).
			\end{cases}
		\end{align*}
		In addition,
		\begin{align*}
			\|\Lambda^{\sigma'} \partial_i v^n\|_{L^4},\|\partial_iv^n\|_{L^\frac{4}{1-2\sigma'}} &\leq C(s)\|\nabla v^n\|^\frac{3}{2\delta}_{L^2} \|\Lambda^{\delta+1}v^n\|^\frac{2\delta-3}{2\delta}_{L^2},
			\\
			\|\nabla v^n\|_{L^4},\|\Lambda^{\sigma'}\nabla v^n\|_{L^\frac{4}{1+2\sigma'}} &\leq C(s)\|\nabla v^n\|^\frac{2\delta-1}{2\delta}_{L^2} \|\Lambda^{\delta+1} v^n\|^\frac{1}{2\delta}_{L^2},
			\\
			\|\partial_i v^n\|_{L^6} &\leq C(s)\|\nabla v^n\|^\frac{3\delta-2}{3\delta}_{L^2} \|\Lambda^{\delta+1}v^n\|^\frac{2}{3\delta}_{L^2},
			\\
			\|\Lambda^{\sigma'}\nabla v^n\|_{L^3} &\leq C(s)\|\nabla v^n\|^\frac{5}{3\delta}_{L^2} \|\Lambda^{\delta+1} v^n\|^\frac{3\delta-5}{3\delta}_{L^2},
			\\
			\|\Lambda^{\sigma'}v^n\|_{L^4},\|v^n\|_{L^\frac{4}{1-2\sigma'}} &\leq C(s)\|v^n\|^\frac{5}{2(\delta+1)}_{L^2} \|\Lambda^{s+1} v^n\|^\frac{2\delta-3}{2(\delta+1)}_{L^2},
			\\
			\|\nabla \partial_i v^n\|_{L^4},\|\Lambda^{\sigma'}\nabla \partial_i v^n\|_{L^\frac{4}{1+2\sigma'}} &\leq C(s)\|\nabla v^n\|^\frac{2\delta-3}{2\delta}_{L^2} \|\Lambda^{\delta+1} v^n\|^\frac{3}{2\delta}_{L^2},
			\\
			\|\Lambda^{\sigma'}\nabla \partial_i v^n\|_{L^3} &\leq C(s)\|\nabla v^n\|^\frac{2}{3\delta}_{L^2} \|\Lambda^{\delta+1} v^n\|^\frac{3\delta-2}{3\delta}_{L^2}.
		\end{align*}
		Thus, $I_8$ can be estimated similarly as $I_2$.
	
		$\bullet$ If $\delta \in (s,s+1)$ then we obtain
		\begin{align*}
			\frac{1}{2}\frac{d}{dt} \|v^n\|^2_{\dot{H}^\delta} + \min\{\nu_2,\mu_1\}\|v^n\|^2_{\dot{H}^{\delta+1}} \leq I_9 + I_{10},
		\end{align*}
		where for some $\epsilon \in (0,1)$, since $2\delta - s -1 \leq \delta + 1$
		\begin{align*}
			I_9 &:= \sum^2_{i=1} \lambda_1\int_{\mathbb{R}^2} \Lambda^{s-1} \partial_i \mathbb{P}(\theta^ne_2) \cdot \Lambda^{2\delta-s-1} \partial_i v^n\,dx
			\\
			&\leq C(T^n_*,\nu_2,\mu_1,\lambda_i,v_0,\theta_0,\epsilon) \left(\|\theta^n\|^2_{\dot{H}^s} +1\right) + \epsilon\min\{\nu_2,\mu_1\}\|v^n\|^2_{\dot{H}^{\delta+1}};
			\\
			I_{10} &:= - \sum^2_{i=1} \int_{\mathbb{R}^2} \Lambda^{\delta-1}  (\partial_iv^n \cdot \nabla v^n + v^n \cdot \nabla \partial_iv^n) \cdot \Lambda^{\delta-1} \partial_iv^n \,dx.
		\end{align*}
		Since $\delta \in (s,s+1)$, it implies that $\delta - 1 \in (s-1,s) \in (0,2)$. If $\delta-1 \in (1,2)$ then $I_{10}$ is exactly bounded as $I_8$. If $\delta - 1 \in (0,1)$ then we define $\sigma'' := \delta-1 \in (0,1)$ and find that   
		\begin{align*}
			I_{10} &=: -\sum^2_{i=1} I_{101i} + I_{102i},	
			\\
			I_{101i} &\leq C(\delta) \|\Lambda^{\delta}v^n\|_{L^2} \times
			\begin{cases}
				\|\Lambda^{\sigma''} \partial_iv^n\|_{L^4}\|\nabla v^n\|_{L^4} + \|\partial_iv^n\|_{L^\frac{4}{1-2\sigma''}}\|\Lambda^{\sigma''} \nabla v^n\|_{L^\frac{4}{1+2\sigma''}} \quad &\text{if } \sigma'' \in (0,\frac{1}{2}),
				\\
				\|\Lambda^{\sigma''} \partial_iv^n\|_{L^4}\|\nabla v^n\|_{L^4} + \|\partial_iv^n\|_{L^6}\|\Lambda^{\sigma''} \nabla v^n\|_{L^3} \quad &\text{if } \sigma'' \in [\frac{1}{2},1),
			\end{cases}
			\\
			I_{102i} &\leq C(\delta)\|\Lambda^{\delta}v^n\|_{L^2} 
				\left(\|\Lambda^{\sigma''} v^n\|_{L^\frac{2}{\delta-1}}\|\nabla \partial_iv^n\|_{L^\frac{2}{2-\delta}} + \|v^n\|_{L^\infty}\|\Lambda^{\sigma''} \nabla \partial_iv^n\|_{L^2}\right).
		\end{align*}
		Similarly,
		\begin{align*}
			\|\Lambda^{\sigma''} \partial_i v^n\|_{L^4},\|\partial_iv^n\|_{L^\frac{4}{1-2\sigma''}} &\leq C(\delta)\|\nabla v^n\|^\frac{1}{2\delta}_{L^2} \|\Lambda^{\delta+1}v^n\|^\frac{2\delta-1}{2\delta}_{L^2},
			\\
			\|\nabla v^n\|_{L^4},\|\Lambda^{\sigma''}\nabla v^n\|_{L^\frac{4}{1+2\sigma''}} &\leq C(\delta)\|\nabla v^n\|^\frac{2\delta-1}{2\delta}_{L^2} \|\Lambda^{\delta+1} v^n\|^\frac{1}{2\delta}_{L^2},
			\\
			\|\partial_i v^n\|_{L^6} &\leq C(\delta)\|\nabla v^n\|^\frac{3\delta-2}{3\delta}_{L^2} \|\Lambda^{\delta+1}v^n\|^\frac{2}{3\delta}_{L^2},
			\\
			\|\Lambda^{\sigma''}\nabla v^n\|_{L^3} &\leq C(\delta)\|\nabla v^n\|^\frac{2}{3\delta}_{L^2} \|\Lambda^{\delta+1} v^n\|^\frac{3\delta-2}{3\delta}_{L^2},
			\\
			\|\Lambda^{\sigma''}v^n\|_{L^\frac{2}{\delta-1}} &\leq C(\delta)\|\nabla v^n\|_{L^2},
			\\
			\|\nabla \partial_i v^n\|_{L^\frac{2}{2-\delta}} &\leq C(\delta) \|\Lambda^{\delta+1} v^n\|_{L^2},
			\\
			\|\Lambda^\delta v^n\|_{L^2} &\leq C(\delta) \|v^n\|^\frac{1}{\delta+1}_{L^2}\|\Lambda^{\delta+1}v^n\|^\frac{\delta}{\delta+1}_{L^2}
			\\
			\|v^n\|_{L^\infty}\|\Lambda^{\sigma''} \nabla \partial_iv^n\|_{L^2} &\leq C(\delta) \left(\|\Lambda^\delta v^n\|_{L^2} + \|v^n\|_{L^2}\right)\|\Lambda^{\delta+1}v^n\|_{L^2}.
		\end{align*}
		If $\delta - 1 = 1$ or $\delta = 2$ then
		\begin{align*}
			I_{10} &= - \sum^2_{i=1} \int_{\mathbb{R}^2} (\partial_iv^n \cdot \nabla v^n + v^n \cdot \nabla \partial_iv^n) \cdot \Lambda^2 \partial_iv^n \,dx
			\\
			&\leq C\left(\|\nabla v^n\|_{L^2} \|\Lambda^2v^n\|_{L^2} + \|v^n\|^\frac{1}{2}\|\nabla v^n\|^\frac{1}{2}_{L^2}\|\Lambda^\frac{3}{2}\nabla v^n\|_{L^2}\right) \|\Lambda^3 v^n\|_{L^2}
			\\
			&\leq C\left(\|\nabla v^n\|_{L^2} \|v^n\|^\frac{1}{3}_{L^2}\|\Lambda^3v^n\|^\frac{2}{3}_{L^2} + \|v^n\|^\frac{1}{2}\|\nabla v^n\|_{L^2}\|\Lambda^3 v^n\|^\frac{1}{2}_{L^2}\right) \|\Lambda^3 v^n\|_{L^2}
			\\
			&\leq C(T^n_*,\nu_2,\mu_1,\lambda_i,v_0,\theta_0,\epsilon) + 2\epsilon \min\{\nu_2,\mu_1\}\|v^n\|^2_{\dot{H}^3}.
		\end{align*}
		Choosing $\epsilon = \frac{1}{6}$ in the three cases  yields  
		\begin{align*}
			\frac{d}{dt} \|v^n\|^2_{H^\delta} + \min\{\nu_2,\mu_1\}\|v^n\|^2_{H^{\delta+1}} \leq C(T^n_*,\delta,\nu_2,\mu_1,\lambda_i,v_0,\theta_0)\left(\|\theta^n\|^2_{\dot{H}^s} + 1\right).
		\end{align*}
		Moreover, 
		\begin{equation*}
			\frac{1}{2} \frac{d}{dt}\|\theta^n\|^2_{\dot{H}^s} = I_{11} + I_{12} + I_{13}, 
		\end{equation*}
		where since $s-1 \in (0,1)$ 
		\begin{align*}
			I_{11} &:= - \sum^2_{i=1} \lambda_2 \int_{\mathbb{R}^2} \Lambda^{s-1}\partial_iv^n_2 \Lambda^{s-1} \partial_i\theta^n\,dx 
			\leq |\lambda_2| \|v^n\|_{H^2} \|\theta^n\|_{\dot{H}^{s}};
			\\
			I_{12} &:= -\sum^2_{i=1} \int_{\mathbb{R}^2} [\Lambda^{s-1} 	(v^n \cdot \nabla \partial_i\theta^n) - v^n \cdot \nabla \Lambda^{\sigma'}\partial_i\theta^n] \Lambda^{s-1} \partial_i\theta^n \,dx 
			\\
			&\leq C(s)\left(\|\Lambda^s  v^n\|_{L^\frac{2}{s-1}}  \|\nabla \theta^n\|_{L^\frac{2}{2-s}} + \|\nabla v^n\|_{L^\infty} \|\Lambda^{s} \theta^n\|_{L^2} \right)\|\Lambda^{s}\theta^n\|_{L^2}
			\\
			&\leq C(s)\left(\|v^n\|_{H^2} \|\theta^n\|_{\dot{H}^s} + \|\nabla v^n\|_{L^\infty} \|\Lambda^{s} \theta^n\|_{L^2} \right)\|\Lambda^{s}\theta^n\|_{L^2};
			\\
			I_{13} &:= -\int_{\mathbb{R}^2} \Lambda^{s-1}	(\partial_i v^n \cdot \nabla \theta^n) \Lambda^{s-1} \partial_i\theta^n \,dx 
			\\
			&\leq C(s)\left(\|\Lambda^s  v^n\|_{L^\frac{2}{s-1}}  \|\nabla \theta^n\|_{L^\frac{2}{2-s}} + \|\nabla v^n\|_{L^\infty} \|\Lambda^{s} \theta^n\|_{L^2} \right)\|\Lambda^{s}\theta^n\|_{L^2}.
		\end{align*}
		In the estimate of $\|\nabla v^n\|_{L^\infty}$, by using the Brezis-Gallouet type inequality, we can choose $2 + s_0 := \delta + 1$ for $s_0 \in (0,1]$  since $ 1 < s \leq s_0 + 1 = \delta \leq s+1 < 2$ or $ 0 < s_0 < 1$. Therefore,  		
		\begin{equation*}
			\frac{d}{dt}\left(\|\theta^n\|^2_{\dot{H}^{s}} + 1\right) \leq C(T^n_*,s,\nu_2,\mu_1,\lambda_i,v_0,\theta_0)\|v^n\|_{H^2}  \left(\|\theta^n\|^2_{\dot{H}^{s}} + 1\right) \left(1 + \log \frac{\|v^n\|^2_{H^{\delta+1}}}{\|v^n\|^2_{H^2}}\right)^\frac{1}{2}.
		\end{equation*}
		Thus, \eqref{Hs_estimate} follows for $s \in (1,2)$ and $\delta \in [s,s+1]$.
	\end{proof}
	
	We continue with the case $s = 2$ and $\delta \in [2,3]$.
	
	\begin{proof}[Proof of Step 2e: The case $s = 2$ and $2 \leq \delta \leq 3$] Similar to the previous case, we separate to the cases $\delta = 2$, $\delta = 3$ and $\delta \in (2,3)$, respectively, as follows.
		
		$\bullet$ If $\delta = 2$ then we find that 
		\begin{equation*}
			\frac{1}{2}\frac{d}{dt}\|v^n\|^2_{\dot{H}^2} + \min\{\nu_2,\mu_1\}\|v^n\|^2_{\dot{H}^3} \leq I_{14} + I_{15},
		\end{equation*}
		where for some $\epsilon \in (0,1)$\footnote{For two matrices $A = (a_{ij})$ and $B = (b_{ij})$, $A:B := \sum_{1 \leq i,j \leq 2} a_{ij}b_{ij}$.}
		\begin{align*}
			I_{14} &:= \lambda_1\int_{\mathbb{R}^2} \Delta \mathbb{P}(\theta e_2) \cdot \Delta v^n \,dx
			\\
			&\leq C(T^n_*,\nu_2,\mu_1,\lambda_i,v_0,\theta_0)\left(\|\theta^n\|^2_{\dot{H}^2} +1\right) + \epsilon \min\{\nu_2,\mu_1\}\|v^n\|_{\dot{H}^3};
			\\
			I_{15} &:=  \int_{\mathbb{R}^2}  [\nabla (v^n \cdot \nabla) v^n + (v^n \cdot \nabla) \nabla v^n] : \nabla \Delta v^n \,dx  
			\\
			&\leq C\left(\|\nabla v^n\|_{L^2}\|D^2 v^n\|_{L^2} + \|v^n\|^\frac{1}{2}_{L^2}\|\nabla v^n\|^\frac{1}{2}_{L^2} \|D^2 v^n\|^\frac{1}{2}_{L^2}\|v^n\|^\frac{1}{2}_{\dot{H}^3}\right)\|v^n\|_{\dot{H}^3}
			\\
			&\leq C(T^n_*,\nu_2,\mu_1,\lambda_i,v_0,\theta_0)\left(\|v^n\|^\frac{2}{3}_{\dot{H}^3} + \|v^n\|^\frac{5}{6}_{\dot{H}^3} \right)\|v^n\|_{\dot{H}^3}
			\\
			&\leq C(T^n_*,\nu_2,\mu_1,\lambda_i,v_0,\theta_0,\epsilon) + 2\epsilon \min\{\nu_2,\mu_1\} \|v^n\|^2_{\dot{H}^3}, 
		\end{align*}
		here we used the fact that $\|v^n\|_{\dot{H}^m} \leq C(m)\|w^n\|_{\dot{H}^{m-1}}$ for $m \in \mathbb{N}$, and the 2D Gagliardo-Nirenberg inequality (see \cite{Nirenberg_1959}) for $1 \leq p,q,r \leq \infty$, $j,m \in \mathbb{N}_0 := \mathbb{N} \cup \{0\}$, $0 \leq j < m$ 
		\begin{equation*} 
			\|D^j f\|_{L^p} \leq C(j,m,p,r,q,a)\|D^m f\|^a_{L^r} \|f\|^{1-a}_{L^q}, \qquad D^m := \sum_{|\alpha| = m} D^\alpha,
		\end{equation*}
		with $D^\alpha := \partial_1^{\alpha_1}\partial_2^{\alpha_2}$, $|\alpha| := \alpha_1 + \alpha_2$, $\alpha_1,\alpha_2 \in \mathbb{N} \cup \{0\}$ and 
		\begin{equation*}
			\frac{1}{p} = \frac{j}{2} + a\left(\frac{1}{r} - \frac{m}{2}\right) + \frac{1-a}{q} \qquad \text{with} \quad 
			\begin{cases}
				\frac{j}{m} \leq a < 1 &\text{if} \quad  m - j - \frac{2}{r} \in \mathbb{N}_0,
				\\
				\frac{j}{m} \leq a \leq 1 &\text{otherwise}.
			\end{cases}
		\end{equation*}
		
		$\bullet$ If $\delta = 3$ then we obtain
		\begin{equation*}
			\frac{1}{2}\frac{d}{dt}\|v^n\|^2_{\dot{H}^3} + \min\{\nu_2,\mu_1\}\|v^n\|^2_{\dot{H}^4} \leq I_{16} + I_{17},
		\end{equation*}
		where for some $\epsilon \in (0,1)$, similar to the case $\delta = 2$
		\begin{align*}
			I_{16} &:= \lambda_1\int_{\mathbb{R}^2} D^3(\mathbb{P}\theta^n e_2) \cdot D^3v^n\,dx  \leq C(\nu_2,\mu_1,\lambda_1,\epsilon)|\|\theta^n\|^2_{\dot{H}^2} + \epsilon \min\{\nu_2,\mu_1\}\|v^n\|_{\dot{H}^4};
			\\
			I_{17} &:= -\int_{\mathbb{R}^2} D^3(\mathbb{P} (T_n(v^n \cdot \nabla v^n)) \cdot D^3v^n \,dx
			\\
			&\leq C\left(\|\nabla v^n\|^\frac{1}{2}_{L^2}\|D^2v^n\|_{L^2}\|D^3v^n\|^\frac{1}{2}_{L^2} + \|v^n\|^\frac{1}{2}_{L^2}\|\nabla v^n\|^\frac{1}{2}_{L^2}\|D^3v^n\|^\frac{1}{2}_{L^2}\|D^4v^n\|^\frac{1}{2}_{L^2}\right)\|D^4v^n\|_{L^2}
			\\
			&\leq C\left(\|\nabla v^n\|^\frac{1}{2}_{L^2}\|D^2v^n\|_{L^2}\|v\|^\frac{1}{8}\|D^4v^n\|^\frac{3}{8}_{L^2} + \|v^n\|^\frac{1}{2}_{L^2}\|\nabla v^n\|^\frac{1}{2}_{L^2}\|v^n\|^\frac{1}{8}_{L^2}\|D^4v^n\|^\frac{7}{8}_{L^2}\right)\|D^4v^n\|_{L^2}
			\\
			&\leq C(T^n_*,\nu_2,\mu_1,\lambda_i,v_0,\theta_0,\epsilon) + 2\epsilon \min\{\nu_2,\mu_1\} \|v^n\|^2_{\dot{H}^4}. 
		\end{align*}
		
		$\bullet$ If $\delta \in (2,3)$ then we find that 
		\begin{equation*}
			\frac{1}{2}\frac{d}{dt}\|v^n\|^2_{\dot{H}^\delta} + \min\{\nu_2,\mu_1\}\|v^n\|^2_{\dot{H}^{\delta+1}} \leq I_{18} + I_{19},
		\end{equation*}
		where for some $\epsilon \in (0,1)$, $\sigma''' := \delta - 2 \in (0,1)$, since $2\delta - 3 \leq \delta$
		\begin{align*}
			I_{18} &:= \sum^2_{i=1} \lambda_1\int_{\mathbb{R}^2} \Lambda \partial_i(\mathbb{P}\theta^n e_2) \cdot \Lambda^{2\delta-3}\partial_iv^n\,dx 
			\\
			&\leq C(T^n_*,\nu_2,\mu_1,\lambda_i,v_0,\theta_0)\left(\|\theta^n\|^2_{\dot{H}^2} +1\right) + \epsilon \min\{\nu_2,\mu_1\}\|v^n\|_{\dot{H}^{\delta+1}};
			\\
			I_{19} &:= -\sum^2_{i=1} \int_{\mathbb{R}^2} \Lambda^{\sigma'''}(\partial_i v^n \cdot \nabla v^n + v^n \cdot \nabla \partial_iv^n) \cdot \Lambda^{\delta}\partial_i v^n \,dx.
		\end{align*}
		It can be seen that $I_{19}$ can be bounded in the same way as $I_8$. 
		Choosing $\epsilon = \frac{1}{6}$ in the three cases  yields  
		\begin{align*}
			\frac{d}{dt} \|v^n\|^2_{H^\delta} + \min\{\nu_2,\mu_1\}\|v^n\|^2_{H^{\delta+1}} \leq C(T^n_*,\delta,\nu_2,\mu_1,\lambda_i,v_0,\theta_0)\left(\|\theta^n\|^2_{\dot{H}^2} + 1\right).
		\end{align*} 
		Moreover,
		\begin{equation*}
			\frac{1}{2}\frac{d}{dt}\|\theta^n\|^2_{\dot{H}^2} = I_{20} + I_{21},
		\end{equation*}
		where for some $\epsilon' \in (0,1)$
		\begin{align*}
			I_{20} &:= \lambda_2\int_{\mathbb{R}^2} \Delta \theta^n  \Delta v^n_2\,dx \leq C(\lambda_2)\left(\|\theta^n\|^2_{H^2} + \|v^n\|^2_{H^\delta}\right);
			\\
			I_{21} &:= -\int_{\mathbb{R}^2} \Delta(T_n(v^n \cdot \nabla \theta^n))  \Delta\theta^n \,dx
			\\
			&\leq C\left(\|\Delta v^n\|_{L^\frac{2}{1-\epsilon'}} \|\nabla \theta^n\|_{L^\frac{2}{\epsilon'}} + \|\nabla v^n\|_{L^\infty}\|\theta^n\|_{\dot{H}^2}\right) \|\theta^n\|_{\dot{H}^2}
			\\
			&\leq C\left(\|v^n\|_{\dot{H}^{2+\epsilon'}} \|\theta^n\|_{\dot{H}^{2-\epsilon'}} + \|\nabla v^n\|_{L^\infty}\|\theta^n\|_{\dot{H}^2}\right) \|\theta^n\|_{\dot{H}^2}
			\\
			&\leq C(\epsilon')\left(\|v^n\|_{H^{2+\epsilon'}} + \|\nabla v^n\|_{L^\infty}\right) \|\theta^n\|^2_{H^2},
		\end{align*}
		which implies that for $Y^n_\delta(t) := \|v^n(t)\|^2_{H^\delta} +  \|\theta^n(t)\|^2_{H^2} + 1$
		\begin{align*}
			\frac{d}{dt}Y^n_\delta(t)  + \min\{\nu_2,\mu_1\}\|v^n\|^2_{H^{\delta+1}} \leq C(T^n_*,\delta,\nu_2,\mu_1,\lambda_i,v_0,\theta_0,\epsilon')\|v^n\|_{H^{2+\epsilon'}} Y^n_\delta(t).
		\end{align*} 
		By using Step 2d (with $s \in (1,2)$ and by choosing $s = \delta = 1 + \epsilon'$), we can bound the time integral of $\|v^n\|_{H^{2+\epsilon'}}$. Thus, \eqref{Hs_estimate} follows for $s = 2$ and $\delta \in [2,3]$.
	\end{proof}

	We now consider the case $s > 2$ and $\delta \in (s,s+1]$ as follows.
	
	\begin{proof}[Proof of Step 2f: The case $s > 2$ and $s < \delta \leq s+1$] We find that
		\begin{equation*}
			\frac{1}{2}\frac{d}{dt}\left(\|v^n\|^2_{\dot{H}^\delta} + \|\theta^n\|^2_{H^s}\right) + \min\{\nu_2,\mu_1\} \|v^n\|^2_{\dot{H}^{\delta+1}} 
			\leq \sum^{33}_{j=30} I_j,
		\end{equation*}
		where for some $\epsilon \in (0,1)$, since $2\delta - s \leq \delta + 1$
		\begin{align*}
			I_{30} &:= \lambda_1\int_{\mathbb{R}^2} \Lambda^s(\mathbb{P}(\theta^n e_2)) \cdot \Lambda^{2\delta-s} v^n\,dx 
			\\
			&\leq C(T^n_*,\nu_2,\mu_1,\lambda_i,v_0,\theta_0,\epsilon) \left(\|\theta^n\|^2_{\dot{H}^s} + 1\right) + \epsilon\min\{\nu_2,\mu_1\} \|v^n\|^2_{\dot{H}^{\delta+1}};
			\\
			I_{31} &:= -\int_{\mathbb{R}^2} \Lambda^\delta(\mathbb{P}(T_n(v^n \cdot \nabla v^n)) \cdot \Lambda^\delta v^n \,dx
			\\
			&\leq C(s,\nu_2,\mu_1,\epsilon) \left(\|v^n\|^2_{L^\infty} + \|\nabla v^n\|_{L^\infty}\right)\|v^n\|^2_{H^\delta} + \epsilon\min\{\nu_2,\mu_1\}\|v^n\|^2_{\dot{H}^{\delta+1}};
			\\
			I_{32} &:= -\lambda_2\int_{\mathbb{R}^2} J^s v^n_2 J^s \theta^n \,dx \leq |\lambda_2|\left(\|v^n\|^2_{H^s} + \|\theta^n\|^2_{H^s}\right);
			\\
			I_{33} &:= -\int_{\mathbb{R}^2} J^s(T_n(v^n \cdot \nabla \theta^n)) J^s \theta^n \,dx
			\leq C(s)\left(\|v^n\|^2_{H^s} + \|\theta^n\|^2_{H^s}\right) \left(\|\nabla v^n\|_{L^\infty} + \|\nabla \theta^n\|_{L^\infty}\right).
		\end{align*}
		By choosing $\epsilon = \frac{1}{4}$, it follows that
		\begin{equation*}
			\frac{d}{dt}Y^n_{\delta,s}(t) + \min\{\nu_2,\mu_1\} \|v^n\|^2_{H^{\delta+1}} \leq C(s,\nu_2,\mu_1,\lambda_i)\left(1 + \|v^n\|^2_{L^\infty} + \|\nabla v^n\|_{L^\infty} + \|\nabla \theta^n\|_{L^\infty}\right) Y^n_{\delta,s}(t),
		\end{equation*}
		where 
		\begin{equation*}
			Y^n_{\delta,s}(t) := \|v^n(t)\|^2_{H^\delta} + \|\theta^n(t)\|^2_{H^s} + 1 \qquad \text{for} \quad t \in (0,T^n_*). 
		\end{equation*}
		Thus, \eqref{Hs_estimate} follows for $s > 2$ and $\delta \in (s,s+1]$.
	\end{proof}
	
	We now consider the case $s > 2$ and $\delta \in [s-1,s)$ as follows.
	
	\begin{proof}[Proof of Step 2g: The case $s > 2$ and $s-1 \leq \delta < s$] We find that
		\begin{equation*}
			\frac{1}{2}\frac{d}{dt}\left(\|v^n\|^2_{\dot{H}^\delta} + \|\theta^n\|^2_{H^s}\right) + \min\{\nu_2,\mu_1\} \|v^n\|^2_{\dot{H}^{\delta+1}} 
			\leq \sum^{37}_{j=34} I_j,
		\end{equation*}
		where for some $\epsilon \in (0,1)$, since $s \leq \delta + 1$
		\begin{align*}
			I_{34} &:= \lambda_1\int_{\mathbb{R}^2} \Lambda^\delta(\mathbb{P}(\theta^n e_2)) \cdot \Lambda^{\delta} v^n\,dx 
			\leq |\lambda_1| \left(\|\theta^n\|^2_{H^s} + \|v^n\|^2_{H^{\delta}}\right);
			\\
			I_{35} &:= -\int_{\mathbb{R}^2} \Lambda^\delta(\mathbb{P}(T_n(v^n \cdot \nabla v^n)) \cdot \Lambda^\delta v^n \,dx
			\\
			&\leq C(\delta,\nu_2,\mu_1,\epsilon) \left(\|v^n\|^2_{L^\infty} + \|\nabla v^n\|_{L^\infty}\right)\|v^n\|^2_{H^\delta} + \epsilon\min\{\nu_2,\mu_1\}\|v^n\|^2_{H^{\delta+1}};
			\\
			I_{36} &:= -\lambda_2\int_{\mathbb{R}^2} J^s v^n_2 J^s \theta^n \,dx \leq C(\nu_2,\mu_1,\lambda_2,\epsilon) \|\theta^n\|^2_{H^s} + \epsilon \min\{\nu_2,\mu_1\}\|v^n\|^2_{H^{\delta+1}} ;
			\\
			I_{37} &:= -\int_{\mathbb{R}^2} J^s(T_n(v^n \cdot \nabla \theta^n)) J^s \theta^n \,dx
			\\
			&\leq C(s,\nu_2,\mu_1,\epsilon)\left(\|v^n\|_{L^\infty} + \|\nabla \theta^n\|^2_{L^\infty}\right)\|\theta^n\|^2_{H^s}  + \epsilon \min\{\nu_2,\mu_1\}\|v^n\|^2_{H^{\delta+1}}.
		\end{align*}
		By adding the term $\min\{\nu_2,\mu_1\}\|v^n(t)\|^2_{L^2}$ to both sides and choosing $\epsilon = \frac{1}{6}$, we obtain
		\begin{equation*}
			\frac{d}{dt}Y^n_{\delta,s}(t) + \min\{\nu_2,\mu_1\} \|v^n\|^2_{H^{\delta+1}} \leq C\left(\|v^n\|_{L^\infty} + \|v^n\|^2_{L^\infty} + \|\nabla v^n\|_{L^\infty} + \|\nabla \theta^n\|^2_{L^\infty}\right) Y^n_{\delta,s}(t),
		\end{equation*}
		where $C = C(\delta,s,\nu_2,\mu_1,\lambda_i)$ and
		\begin{equation*}
			Y^n_{\delta,s}(t) := \|v^n(t)\|^2_{H^\delta} + \|\theta^n(t)\|^2_{H^s}  \qquad \text{for} \quad t \in (0,T^n_*). 
		\end{equation*}
		It remains to bound the $L^1_tL^\infty_x$ norm of $(v^n,\nabla v^n)$ and  the $L^2_tL^\infty_x$ norm of $(v^n,\nabla \theta^n)$. Since $s-1 > 1$ then we can apply Step 2d with $\delta' := s-1$, which gives the bound of $\|v^n\|_{L^2(0,T^n_*;H^{\delta'+1}(\mathbb{R}^2))}$ for $\delta' + 1 = s > 2$. That is enough to bound the $L^1_tL^\infty_x$ norm of $(v^n,\nabla v^n)$, the $L^2_tL^\infty_x$ norm of $v^n$ and as well as the $L^2_tL^\infty_x$ norm of $\nabla \theta^n$ (by using Step 2g for the case $\delta = s > 2$) in terms of $C(T^n_*,\delta,s,\nu_2,\mu_1,\lambda_i,v_0,\theta_0)$. Thus, \eqref{Hs_estimate} follows for $s > 2$ and $\delta \in [s-1,s)$.
	\end{proof}
	
	We continue with the case $s = 2$ and $\delta \in (1,2)$ as follows.
	
	\begin{proof}[Proof of Step 2g: The case $s = 2$ and $\delta \in (1,2)$] We find that 
		\begin{equation*} 
			\frac{1}{2}\frac{d}{dt}\|v^n\|^2_{\dot{H}^\delta}  + \min\{\nu_2,\mu_1\}\|v^n\|^2_{\dot{H}^{\delta+1}} \leq I_{38} + I_{39},
		\end{equation*}
		where for some $\epsilon \in (0,1)$
		\begin{align*}
			I_{38} &:= \lambda_1\int_{\mathbb{R}^2} \Lambda^\delta (\mathbb{P}(\theta^n e_2)) \cdot \Lambda^\delta v^n \,dx 
			\leq |\lambda_1| \left(\|\theta^n\|^2_{H^s} + \|v^n\|^2_{H^\delta}\right);
			\\
			I_{39} &:= -\int_{\mathbb{R}^2} \Lambda^{\delta-1}(v^n \cdot \nabla v^n) \cdot \Lambda^{\delta+1} v^n \,dx.
		\end{align*}
		Since $\delta - 1 \in  (0,1)$, $I_{39}$ can be bounded in the same way as $I_2$. The $H^2$ estimate of $\theta^n$ can be done in the same way as in Step 2e (the integrals $I_{20}$ and $I_{21}$). Thus, \eqref{Hs_estimate} follows for $s = 2$ and $\delta \in (1,2)$.
	\end{proof}
	
	Finally, we finish the proof of Step 2g by giving the proof of the case $s \in (1,2)$ and $\delta \in (1,s)$.
	
	\begin{proof}[Proof of Step 2g: The case $s \in (1,2)$ and $\delta \in (1,s)$] We find that 
		\begin{equation*} 
			\frac{1}{2}\frac{d}{dt}\|v^n\|^2_{\dot{H}^\delta}  + \min\{\nu_2,\mu_1\}\|v^n\|^2_{\dot{H}^{\delta+1}} \leq I_{40} + I_{41},
		\end{equation*}
		where for some $\epsilon \in (0,1)$, since $0 < 2\delta - s \leq \delta + 1$
		\begin{align*}
			I_{40} &:= \lambda_1\int_{\mathbb{R}^2} \Lambda^s (\mathbb{P}(\theta^n e_2)) \cdot \Lambda^{2\delta-s} v^n \,dx 
			\\
			&\leq C(T^n_*,\nu_2,\mu_1,\lambda_i,v_0,\theta_0,\epsilon)\left(\|\theta^n\|^2_{\dot{H}^s} + 1\right) + \epsilon\min\{\nu_2,\mu_1\}\|v^n\|^2_{H^{\delta+1}};
			\\
			I_{41} &:= -\int_{\mathbb{R}^2} \Lambda^{\delta-1}(v^n \cdot \nabla v^n) \cdot \Lambda^{\delta+1} v^n \,dx.
		\end{align*}
		Since $\delta - 1 \in  (0,s-1) \in (0,1)$ for $s \in (1,2)$, $I_{41}$ can be bounded in the same way as $I_2$. The $H^s$ estimate of $\theta^n$ for $s \in (1,2)$ can be done in the same way as in Step 2d (the integrals $I_{11}$, $I_{12}$ and $I_{13}$) with choosing $s_0 + 1 = \delta$ or $s_0 \in (0,s-1)$. Thus, \eqref{Hs_estimate} follows for $s \in (1,2)$ and $\delta \in (1,s)$.
	\end{proof}	
	
	%
	%-------------------------------------------
	\subsection{Appendix B: Proof of \eqref{Lp}}
	%-------------------------------------------
	%
	
	The idea of the proof of \eqref{Lp} follows that of \cite[Proposition 3.2]{Boardman-Ji-Qiu-Wu_2019}, where the authors considered the full Laplacian $\Delta v$ instead of $(\partial_{22}v_1,\partial_{11}v_2)$. However, their proof seems to not be applied directly to our case. Thus, we give a proof here for the sake of completeness. Before going to the proof, we quickly recall the definition of the standard nonhomogeneous Besov spaces. There exist two smooth radial functions (see \cite{Bahouri-Chemin-Danchin_2011}) $\chi,\varphi : \mathbb{R}^2 \to [0,1]$ such that 
	\begin{align*}
		&\text{supp}(\chi) \subset \left\{\xi \in \mathbb{R}^2 : |\xi| \leq \frac{4}{3}\right\},
		&&\text{supp}(\varphi) \subset \left\{\xi \in \mathbb{R}^2 : \frac{3}{4} \leq |\xi| \leq \frac{8}{3}\right\},&&
		\\
		&\chi(\xi) + \sum_{j \geq 0} \varphi(2^{-j}\xi) = 1 \quad \forall \xi \in \mathbb{R}^2, &&\sum_{j \in \mathbb{Z}} \varphi(2^{-j}\xi) = 1 \quad \forall \xi \in \mathbb{R}^2 \setminus \{0\}.&&
	\end{align*}
	Define $\tilde{h} := \mathcal{F}^{-1}(\chi)$ and $h := \mathcal{F}^{-1}(\varphi)$, where $\mathcal{F}^{-1}$ denotes the inverse Fourier transform. The nonhomogeneous dyadic blocks are defined by 
	\begin{align*}
		\Delta_j f &:= 
		\begin{cases}
			0 &\text{if} \quad j \leq -2,
			\\ 
			\tilde{h}*f &\text{if} \quad j = -1,
			\\
			2^{2j} h(2^j\cdot)*f &\text{if} \quad j \geq 0,
		\end{cases}
	\end{align*}
	here, $*$ stands for the usual convolution operator. For $s \in \mathbb{R}$ and $p,q \in [1,\infty]$, 
	\begin{equation*}
		B^s_{p,q}(\mathbb{R}^2) := \left\{f \in \mathcal{S}' : \|f\|_{B^s_{p,q}(\mathbb{R}^2)} := \|2^{sj}\|\Delta_j f\|_{L^p(\mathbb{R}^2)}\|_{\ell^q(\mathbb{Z})} < \infty\right\},
	\end{equation*}
	where $\mathcal{S}'$ denotes the dual space of the Schwartz class $\mathcal{S}$. It is also convenient to use the identity $B^s_{2,2}(\mathbb{R}^2) = H^s(\mathbb{R}^2)$. In addition, we have the Littlewood–Paley decomposition
	\begin{equation*}
		f = \sum_{j\in \mathbb{Z}} \Delta_j f \qquad \text{in} \quad \mathcal{S}'.
	\end{equation*} 
	
	\begin{proof}[Proof of \eqref{Lp}]
		It follows from \eqref{B} that for $t \in (0,T)$ and $j \in \mathbb{Z}$
		\begin{equation*}
			\frac{1}{2}\frac{d}{dt}\|\Delta_j v(t)\|^2_{L^2} + \nu_2 \|\Delta_j \partial_2v_1\|^2_{L^2} + \mu_1\|\Delta_j\partial_1v_2\|^2_{L^2} 
			\leq \|\Delta_j v\|_{L^2}\left(\|\Delta_j (v \cdot \nabla v)\|_{L^2} + \|\Delta_j \theta\|_{L^2}\right).
		\end{equation*}
		It can be seen from the definition of dyadic blocks that for $j \in \mathbb{Z}$ with $j \geq 0$
		\begin{align*}
			\nu_2 \|\Delta_j \partial_2v_1\|^2_{L^2} = \nu_2\int_{\mathbb{R}^2} |\mathcal{F}(\Delta_j \partial_2v_1)|^2 \,d\xi = C\nu_2\int_{\mathbb{R}^2} |\varphi(2^{-j}\xi)|^2|\xi_2 \mathcal{F}(v_1)|^2 \,d\xi,
			\\
			\mu_1 \|\Delta_j \partial_1v_2\|^2_{L^2} = \mu_1\int_{\mathbb{R}^2} |\mathcal{F}(\Delta_j \partial_1v_2)|^2 \,d\xi = C\mu_1\int_{\mathbb{R}^2} |\varphi(2^{-j}\xi)|^2|\xi_1\mathcal{F}(v_2)|^2 \,d\xi,
		\end{align*}
		which by using the following estimate 
		\begin{equation*}
			|\xi_2 \mathcal{F}(v_1)|^2 + |\xi_1 \mathcal{F}(v_2)|^2 
			\geq \frac{1}{2}|\xi|^2|\mathcal{F}(v)|^2,
		\end{equation*}
		implies that
		\begin{equation*}
			\nu_2 \|\Delta_j \partial_2v_1\|^2_{L^2} + \mu_1 \|\Delta_j \partial_1v_2\|^2_{L^2} 
			\geq C\min\{\nu_2,\mu_1\} 2^{2j}\|\Delta_j v\|^2_{L^2},
		\end{equation*}
		and 
		\begin{align*}
			\esssup_{t \in (0,T)} \|\Delta_j v(t)\|^2_{L^2} + C\min\{\nu_2,\mu_1\}^2 \left(\int^T_0 2^{2j}\|\Delta_j v\|_{L^2} \,d\tau\right)^2
			&\leq  R^2_j \qquad \text{if} \quad j \geq 0,
			\\
			\esssup_{t \in (0,T)} \|\Delta_j v(t)\|^2_{L^2}  
			&\leq R^2_j \qquad \text{if} \quad j \geq -1,
		\end{align*}
		where
		\begin{equation*}
			R_j := \int^T_0 \|\Delta_j (v \cdot \nabla v)\|_{L^2} +  \|\Delta_j \theta\|_{L^2} \,d\tau + \|\Delta_j v(0)\|_{L^2}.
		\end{equation*}
		Furthermore,
		\begin{equation*}
			\sum_{j \geq -1} \esssup_{t \in (0,T)} \|\Delta_j v(t)\|^2_{L^2} + C\min\{\nu_2,\mu_1\}^2 \sum_{j \geq 0} \left(\int^T_0 2^{2j}\|\Delta_j v\|_{L^2} \,d\tau\right)^2
			\leq  2\sum_{j \geq -1} R^2_j.
		\end{equation*}
		We now estimate the right-hand side as follows\footnote{For $T > 0$, $s \in \mathbb{R}$, $r,p,q \in [1,\infty]$, the space $\tilde{L}^rB^s_{p,q}$ is defined in \cite{Bahouri-Chemin-Danchin_2011} with
		\begin{equation*}
			\|f\|_{\tilde{L}^rB^s_{p,q}} := \|2^{sj}\|\Delta_j f\|_{L^r(0,T;L^p(\mathbb{R}^2))}\|_{\ell^q(\mathbb{Z})} < \infty.
		\end{equation*}
		} 
		\begin{align*}
			\sum_{j \geq -1} R^2_j &\leq C \sum_{j \geq -1} \left(\int^T_0 \|\Delta_j (v \cdot \nabla v)\|_{L^2} \,d\tau\right)^2 +  C \sum_{j \geq -1} \left(\int^T_0 \|\Delta_j \theta\|_{L^2} \,d\tau \right)^2 + C\|v_0\|^2_{L^2}
			\\
			&\leq C\left(\int^T_0 \left(\sum_{j \geq -1} \|\Delta_j (v \cdot \nabla v)\|^2_{L^2} \right)^\frac{1}{2} \,d\tau\right)^2 + C\left(\int^T_0 \left(\sum_{j \geq -1} \|\Delta_j \theta\|^2_{L^2} \right)^\frac{1}{2} \,d\tau\right)^2 + C\|v_0\|^2_{L^2}
			\\
			&\leq C\left(\|v\|^2_{L^2_tL^\infty_x}   \|\nabla v\|^2_{L^2_tL^2_x} + \|\theta\|^2_{L^1_tL^2_x} + \|v_0\|^2_{L^2}\right),
		\end{align*}
		where we used the fact that $L^1(0,T;B^0_{2,2}(\mathbb{R}^2)) \subset \tilde{L}^1B^0_{2,2}$ by using the Minkowski inequality. It remains to bound $\|v\|_{L^2_tL^\infty_x}$. The Littlewood–Paley decomposition gives us
		\begin{align*}
			\int^T_0 \|v\|^2_{L^\infty} \,d\tau 
			&\leq C \int^T_0 \sum_{j \geq -1} 2^j \|\Delta_j v\|_{L^2}  \sum_{i \geq -1}  2^i \|\Delta_i v\|_{L^2} \,d\tau,
			\\
			&\leq C \int^T_0 \left(\sum_{|j-i| \leq N} + \sum_{|j-i| > N}\right) 2^j \|\Delta_j v\|_{L^2}   2^i \|\Delta_i v\|_{L^2} \,d\tau =: I_{53} + I_{54},
		\end{align*}
		where $N \in \mathbb{N}$ to be determined later and we used the following Bernstein-type estimate (see \cite{Bahouri-Chemin-Danchin_2011}) for $1 \leq q \leq p \leq \infty$ 
		\begin{equation*}
			\|f\|_{L^p} \leq C(p,q) \lambda_0^{2\left(\frac{1}{q}-\frac{1}{p}\right)} \|f\|_{L^q} \qquad \text{if} \quad \text{supp}(\mathcal{F}(f)) \subset \left\{\xi \in \mathbb{R}^2 : |\xi| \leq \lambda_0 \right\}.
		\end{equation*}		
		The terms on the right-hand side can be bounded as follows
		\begin{align*}
			I_{53} &= \int^T_0 \sum_{j \geq -1} 2^j \|\Delta_j v\|_{L^2}   \sum_{j-N \leq i \leq j+N} 2^i \|\Delta_i v\|_{L^2} \,d\tau
			\\
			&\leq C\int^T_0 \sum_{j \geq -1}  \left(2^{2j}\|\Delta_j v\|^2_{L^2}  + \sum_{j-N \leq i \leq j+N} 2^{2i} \|\Delta_i v\|^2_{L^2} \right) \,d\tau
			\leq CN \left(\|\nabla v\|^2_{L^2_tL^2_x} + \|v\|^2_{L^2_tL^2_x}\right),
		\end{align*}
		and by using Young inequality for sequences
		\begin{align*}
			I_{54} &= 2^{1-N} \sum^\infty_{j = N} \int^T_0  2^{2j} \|\Delta_j v\|_{L^2}  \sum_{i = -1} ^{j-N-1} 2^{i-(j-N)} \|\Delta_i v\|_{L^2} \,d\tau
			\\
			&\leq 2^{1-N} \left(\sum^\infty_{j = N} \left(\int^T_0  2^{2j} \|\Delta_j v\|_{L^2} \,d\tau \right)^2\right)^\frac{1}{2}   \left(\sum^\infty_{k=0} \left(\sum_{i = -1} ^{k-1} 2^{-(k-i)} \esssup_{t \in (0,T)} \|\Delta_i v\|_{L^2} \right)^2\right)^\frac{1}{2}
			\\
			&\leq 2^{1-N} \left(\sum^\infty_{j = 0} \left(\int^T_0  2^{2j} \|\Delta_j v\|_{L^2} \,d\tau \right)^2\right)^\frac{1}{2}   \left(\sum^\infty_{i=-1} \esssup_{t \in (0,T)} \|\Delta_i v\|^2_{L^2} \right)^\frac{1}{2} \sum^\infty_{k = -1} 2^{-k}.
		\end{align*}
		Therefore,
		\begin{align*}
			\|v\|^2_{L^2_tL^\infty_x}  &\leq C(\nu_2,\mu_1) 2^{-N} \left(\|v\|^2_{L^2_tL^\infty_x}   \|\nabla v\|^2_{L^2_tL^2_x} \right)
			+ CN \left(\|\nabla v\|^2_{L^2_tL^2_x} + \|v\|^2_{L^2_tL^2_x}\right)
			\\
			&\quad + C(\nu_2,\mu_1) \left(\|\theta\|^2_{L^1_tL^2_x} + \|v_0\|^2_{L^2}\right),
		\end{align*}
		which by choosing\footnote{Here $\lceil \cdot \rceil$ denotes the usual ceiling function.}
		\begin{equation*}
			N = \left\lceil\log_2\left(4 + 2C(\nu_2,\mu_1)\|\nabla v\|^2_{L^2_tL^2_x}\right)\right\rceil
		\end{equation*}
		yields
		\begin{equation*}
			\|v\|^2_{L^2_tL^\infty_x} \leq 
			C(\nu_2,\mu_1)\left((\|\nabla v\|^2_{L^2_tL^2_x} + 1)\left(\|\nabla v\|^2_{L^2_tL^2_x} + \|v\|^2_{L^2_tL^2_x}\right) +
			\|\theta\|^2_{L^1_tL^2_x} + \|v_0\|^2_{L^2} + 1\right).
		\end{equation*}
		In addition, the Littlewood–Paley decomposition and Bernstein-type estimate imply that for $p \geq 2$
		\begin{align*}
			\int^T_0 \|\nabla v\|_{L^p} \,d\tau 
			&\leq C \sum_{j \geq 0} \int^T_0  2^{2j\left(\frac{1}{2}-\frac{1}{p}\right)} \|\Delta_j \nabla v\|_{L^2} \,d\tau  + C \int^T_0\|\nabla v\|_{L^2} \,d\tau
			\\
			&\leq C \left(\sum_{j \geq 0} 2^{-\frac{4j}{p}}\right)^\frac{1}{2} \left(\sum_{j \geq 0} \left(\int^T_0  2^{2j} \|\Delta_jv\|_{L^2} \,d\tau\right)^2\right)^\frac{1}{2}  + C \int^T_0\|\nabla v\|_{L^2} \,d\tau,
		\end{align*}
		in which 
		\begin{align*}
			\sum_{j \geq 0} 2^{-\frac{4j}{p}} \leq 1 + \int^\infty_0 2^{-\frac{4x}{p}} \,dx \leq  \left(1 + \frac{1}{4\log 2}\right)p.
		\end{align*}
		Finally, it follows that for $p \in [2,\infty)$
		\begin{equation*}
			p^{-1} \int^T_0 \|\nabla v\|_{L^p} \,d\tau \leq p^{-\frac{1}{2}} \int^T_0 \|\nabla v\|_{L^p} \,d\tau \leq C(T,\nu_2,\mu_1,\lambda_i,v_0,\theta_0),
		\end{equation*}
		which ends the proof of \eqref{Lp}.
	\end{proof}
	
	%
	%----------------------------------------------
	\subsection{Appendix C: Representation formula}
	%----------------------------------------------
	%
	
	We now verify the point number 4 in Remark \ref{rm1}. 
	\begin{proof}[Proof of Remark \ref{rm1}: The point number 4.] 
	Let us consider the following system
	\begin{equation*}
		\partial_t \mathcal{F}(v) = \mathcal{F}(\mathbb{P}(\nu_2\partial_{22}v_1,\nu_1\partial_{11}v_2)) + \mathcal{F}(f),
		\qquad
		\textnormal{div}\, v = 0
		\qquad \text{and} \qquad \mathcal{F}(v)(t=0) = \mathcal{F}(v_0),
	\end{equation*}
	with $\textnormal{div}\, v_0 = \textnormal{div}\, f = 0$ and $\nu_1,\nu_2 > 0$. It can be seen that $\partial_t \mathcal{F}(v) = A_2(\xi)\mathcal{F}(v) + \mathcal{F}(f)$, where
	\begin{align*}
		A_2(\xi) &:= P(\xi)
		\left(
		\begin{matrix}
			-\nu_2\xi_2^2 & 0
			\\
			0 & -\nu_1\xi^2_1
		\end{matrix} 
		\right)
		P(\xi)
		=
		a(\xi) P(\xi), 
		\\
		P(\xi) &:= \frac{1}{|\xi|^2}
		\left(
		\begin{matrix}
			\xi^2_2 & -\xi_1\xi_2
			\\
			-\xi_1\xi_2 & \xi^2_1
		\end{matrix}
		\right),
		\qquad a(\xi) :=  -\frac{\nu_1\xi^4_1 + \nu_2\xi^4_2}{|\xi|^2},
	\end{align*}
	which implies that $\partial_t \mathcal{F}(v) = a(\xi) \mathcal{F}(v) + \mathcal{F}(f)$ and 
	\begin{equation*}
		\mathcal{F}(v)(\xi,t) = \exp\{a(\xi)t\} \mathcal{F}(v_0)(\xi) + \int^t_0 \exp\{a(\xi)(t-s)\} \mathcal{F}(f)(\xi,s)\,ds.
	\end{equation*}
	Moreover, it can be seen that
	\begin{equation*}
		\exp\left\{-\max\{\nu_1,\nu_2\}|\xi^2|t\right\} \leq  \exp\{a(\xi)t\} \leq \exp\left\{-\frac{1}{2}\min\{\nu_1,\nu_2\}|\xi^2|t\right\}.
	\end{equation*}
	Since the existence and uniqueness of global weak solutions have been obtained by Theorem \ref{theo_nu2_mu1_de} in this case for $L^1 \cap L^2$ data. Thus, we can follow exactly the proof given in \cite[Theorem 5.1]{Schonbek_1986}, under additional conditions to \eqref{B} such as $v_0 \in L^1(\mathbb{R}^2)$, $\lambda_2 = 0$ and $\theta \equiv 0$, to obtain the $L^2$ decay in time of $v$ with the rate as $\log(e+t)^{-\frac{m}{2}}$ for $m \in \mathbb{N}, m \geq 3$.  
	\end{proof}
	
	%
	%---------------------------
	\section*{Acknowledgements} 
	%---------------------------
	%
	
	K. Kang’s work is supported by NRF-2019R1A2C1084685. D. D. Nguyen’s work is supported by NRF-2019R1A2C1084685 and NRF-2021R1A2C1092830.
	J. Lee’s work is supported by NRF-2021R1A2C1092830. The authors are grateful to the anonymous referee
	for her or his carefully reading the paper and for useful suggestions, which are presented in Remark \ref{r3}.

	%--------------------------
	
	\end{document}